\newtheorem{Theorem}{Theorem}[section]
\newtheorem{Definition}{Definition}
\newtheorem{Proposition}[Theorem]{Proposition} 
\newtheorem{Lemma}[Theorem]{Lemma}
\newtheorem{Corollary}[Theorem]{Corollary}
\theoremstyle{definition}
\newtheorem{Example}[Theorem]{Example}
\newtheorem{Remark}[Theorem]{Remark}
\newcommand{\p}{{\mathbb{P}}}
\newcommand{\Z}{\mathbb{Z}}
\newcommand{\C}{\mathbb{C}}
\def\coker{{\mathrm{coker}}}
\def\fC{\mathfrak{C}}
\def\bj{\bar{j}}
\def\hc{\widehat{c}}
\def\tc{\tilde{c}}
\def\hi{\widehat{i}}
\def\ti{\tilde{i}}
\def\h{\mathfrak{h}}
\def\sH{{\mathcal{H}}}
\def\l{{\lambda}}
\def\k{{\Bbbk}}
\def\id{{\mathrm{id}}}
\def\ker{{\mathrm{ker}}}
\def\im{{\mathrm{im}}}
\def\sA{{\mathcal{A}}}
\def\tY{{\tilde{Y}}}
\def\tX{{\tilde{X}}}
\def\E{{\sf{E}}}
\def\tsE{{\tilde{\mathcal{E}}}}
\def\hsE{\widehat{\mathcal{E}}}
\def\F{{\sf{F}}}
\def\tsF{\tilde{\mathcal{F}}}
\def\hsF{\widehat{\mathcal{F}}}
\def\sF{{\mathcal{F}}}
\def\sM{{\mathcal{M}}}
\def\sL{{\mathcal{L}}}
\def\G{{\mathbb{G}}}
\def\T{{\sf{T}}}
\def\sT{{\mathcal{T}}}
\def\sL{{\mathcal{L}}}
\def\sP{{\mathcal{P}}}
\def\tsP{\tilde{{\mathcal{P}}}}
\def\sU{{\dot{U}}}
\def\sQ{{\mathcal{Q}}}
\newcommand{\Cone}{\mathrm{Cone}}
\def\O{{\mathcal O}}
\def\sE{{\mathcal{E}}}
\def\dim{\mathrm{dim}}
\def\codim{\mathrm{codim}}
\newcommand{\g}{\mathfrak{g}}
\renewcommand{\sl}{{\mathfrak{sl}}}
\newcommand{\M}{\mathfrak{M}}
\newcommand{\tM}{\widetilde{\mathfrak{M}}}
\newcommand{\hM}{\widehat{\mathfrak{M}}}
\newcommand{\N}{\mathfrak{N}}
\newcommand{\tN}{\widetilde{\mathfrak{N}}}
\newcommand{\hN}{\widehat{\mathfrak{N}}}
\newcommand{\B}{\mathfrak{B}}
\newcommand{\tB}{\widetilde{\mathfrak{B}}}
\newcommand{\hB}{\widehat{\mathfrak{B}}}
\newcommand{\Proj}{\mathrm{Proj}\,}
\newcommand{\gl}{\mathfrak{gl}}
\newcommand{\la}{\langle}
\newcommand{\ra}{\rangle}
\renewcommand{\bar}{\overline}
\DeclareMathOperator{\spn}{span}
\DeclareMathOperator{\Hom}{Hom}
\DeclareMathOperator{\Ext}{Ext}
\DeclareMathOperator{\supp}{supp}
\DeclareMathOperator{\End}{End}
\DeclareMathOperator{\GL}{GL}
\DeclareMathOperator{\Lie}{Lie}
\begin{document}

\title{Coherent Sheaves on Quiver Varieties and Categorification}

\author{Sabin Cautis}
\email{scautis@math.columbia.edu}
\address{Department of Mathematics\\ Columbia University \\ New York, NY}

\author{Joel Kamnitzer}
\email{jkamnitz@math.berkeley.edu}
\address{Department of Mathematics\\ UC Berkeley \\ Berkeley, CA}

\author{Anthony Licata}
\email{amlicata@math.stanford.edu}
\address{Department of Mathematics\\ Stanford University \\ Palo Alto, CA}

\begin{abstract}
We construct geometric categorical $\g$ actions on the derived category of coherent sheaves on Nakajima quiver varieties. These actions categorify Nakajima's construction of Kac-Moody algebra representations on the K-theory of quiver varieties. We define an induced affine braid group action on these derived categories. 
\end{abstract}

\date{\today}
\maketitle
\tableofcontents

\section{Introduction}
\subsection{Geometric categorification via quiver varieties}

Quiver varieties were introduced in the 1990s by H. Nakajima, and since their inception they have become central objects relating representation theory and algebraic geometry. In \cite{Nak98}, for any symmetrizable Kac-Moody Lie algebra $ \g $, Nakajima constructed the integrable highest weight representations using the top homology of quiver varieties.  This generalized work of V. Ginzburg for $ \sl_n$.  Later in \cite{Nak00}, Nakajima constructed representations of the quantum affine algebra on the equivariant K-theory of quiver varieties.

The goal of this paper is to lift Nakajima's construction from an action of $\g$ on cohomology/K-theory to an enhanced action of $\g$ on the derived category of coherent sheaves. There is of course a natural candidate for such a lift,  since the correspondences used to define the action of $\g$ on cohomology can also play the r\^ole of Fourier-Mukai kernels which induce functors on the derived categories. This provides an example of an important philosophy, namely, \emph{geometrization lifts to categorification.}

\subsection{Na\"ive and geometric categorical $ \g$ actions}
We now give a more detailed account of the contents in this paper. Associated to a finite graph $\Gamma$ with no loops or multiple edges we consider the associated simply-laced Kac-Moody Lie algebra $\g$ and its quantized universal enveloping algebra $ U_q(\g) $.  An integrable representation $M = \oplus_\l M(\l)$ of $ U_q(\g) $ consists of a collection of weight spaces $M(\l)$ and, for each vertex $i$ of the Dynkin diagram, linear maps
$$
    e_i : M(\lambda) \rightarrow M(\lambda + \alpha_i) \text{ and } f_i: M(\lambda)\rightarrow M(\lambda-\alpha_i)
$$  
satisfying the defining relations in $ U_q(\g) $. Of these relations the most interesting are the commutator relation on the weight space $M(\l)$
\begin{equation}\label{commutator}
    e_i f_i|_{M(\l)} = f_i e_i|_{M(\l)} + [\langle \alpha_i, \lambda \rangle] \mbox{id}_{M(\l)},
\end{equation}
 and the Serre relation
\begin{equation}\label{Serrerelation}
    e_ie_je_i = e_i^{(2)} e_j + e_je_i^{(2)}
\end{equation}
 for vertices $i$ and $j$ connected by an edge in the Dynkin diagram.  (In the above relations $[\langle \alpha_i, \lambda \rangle]$ denotes the quantum integer, while $e_i^{(2)} = \frac{e_i^2}{2}$.)

A na\"ive categorical action consists of replacing each vector space $M(\lambda)$ by a category $D(\l)$ and each linear map by a functor,
$$
    \E_i : D(\lambda) \rightarrow D(\lambda + \alpha_i) \text{ and } \F_i: D(\lambda)\rightarrow D(\lambda-\alpha_i),
$$  
such that the functors obey the defining relations in the quantized enveloping algebra up to isomorphism. For example, relation (\ref{commutator}) becomes the categorified commutator relation 
$$\E_i \circ \F_i|_{D(\lambda)} \simeq \F_i \circ \E_i|_{D(\lambda)} \oplus \id_{D(\l)} \otimes H^\star(\mathbb{P}^{\langle \alpha_i,\lambda \rangle - 1})$$ 
while the Serre relation (\ref{Serrerelation}) becomes 
$$\E_i \circ \E_j \circ \E_i \simeq \E_i^{(2)} \circ \E_j \oplus \E_j \circ \E_i^{(2)}.$$

In a strong categorical action, one specifies natural transformations between these functors which implement these isomorphisms and also satisfy their own relations. Notions of strong categorical $\g$ actions have been developed by Khovanov-Lauda \cite{kl} and Rouquier \cite{R}.  

In this paper we use the notion of a \emph{geometric categorical} $\g$ \emph{action}, introduced in \cite{ck3}, which is closely related to the notion of a strong categorical action but which is suited to our algebro-geometric context. In a geometric categorical $\g$ action we associate to each weight $\l$, a variety $Y(\l)$, and to each generator of $U_q(\g)$ a Fourier-Mukai kernel, denoted $ \sE_i, \sF_i$.  These Fourier-Mukai kernels define functors
$$
    \E_i : D(\lambda) \rightarrow D(\lambda + \alpha_i) \text{ and } \F_i: D(\lambda)\rightarrow D(\lambda-\alpha_i),
$$ 
where $ D(\lambda) = DCoh(Y(\lambda)) $ is the derived category of coherent sheaves on $ Y(\l)$.

In addition, we require for each weight $\l$ a flat deformation $\tilde{Y}(\l) \rightarrow \h'$ of $Y(\l)$, where $\h'$ is the span of the fundamental weights of $\g$. These assignments are required to satisfy a list of properties, as explained in section \ref{sec:geom-cat}.  The existence of the deformations $\tilde{Y}(\lambda)\rightarrow \h'$ places a geometric categorical $\g$ action one level higher on the categorical ladder than an ordinary representation of $\g$, since the required deformations impose a rather rigid structure on the natural transformations of functors $\E_i, \F_i$.  

We expect the notion of geometric categorical $ \g $ action to be directly related to the notions of strong categorical $\g$ actions introduced by Khovanov-Lauda \cite{kl} and Rouquier \cite{R}. In particular, we conjecture that the geometric categorical $\g$ actions in this paper induce 2-representations of 2-categories of Khovanov-Lauda and Rouquier on the derived categories of quiver varieties.  For $\g= \sl_2 $ this conjecture was proven in \cite{ckl2}.

\subsection{Geometric categorical $\g$ actions on quiver varieties}
To construct geometric categorical $\g$ actions, we follow Nakajima and take as our ``weight space varieties" a collection of quiver varieties $\{\M(v,w)\}_v$,where $w$ stays fixed. The kernels $\sE_i, \sF_i$ inducing $\E_i, \F_i$ are the structure sheaves of Nakajima's Hecke correspondences, tensored with appropriate line bundles. The deformations come from varying the value of the moment map in the description of quiver varieties as holomorphic symplectic quotients. After introducing the relevant geometry in section \ref{se:catgaction}, we spend sections \ref{se:basicrel}, \ref{sec:sl2rels}, \ref{se:rank2rel} proving our main theorem, which is that these data satisfy the list of requirements needed for a geometric categorical $\g$ action. Important parts of the proof rely on our earlier work, \cite{ckl1}, \cite{ckl2}, \cite{ckl3} which considered in detail the case $\g=\sl_2$.  The resulting representation of $U_q(\g)$ on the equivariant K-theory of quiver varieties, which is shown to agree with Nakajima's action in \ref{th:agreeNak}, is reducible, so in section \ref{sec:irreps} we describe how to geometrically categorify irreducible $U(\g)$ modules.

An important idea of Chuang-Rouquier \cite{CR} is that categorical $\g$ actions should lead to actions of the associated braid group $ B_\Gamma$ on the weight categories. This was proven for $\sl_2$ in \cite{CR} and \cite{ckl3} and for arbitrary simply-laced $\g$ by the first two authors \cite{ck3}. As a consequence of this result, we obtain an action of the braid group $B_\Gamma$ on the derived category of quiver varieties. In section \ref{sec:affinebraid}, we extend this to an action of the affine braid group. As explained in section \ref{se:BMO}, this affine braid group action is a step towards proving a conjecture of \cite{BMO}, concerning lifting the quantum monodromy to the derived category.  A few other interesting examples of braid group actions on derived categories of quiver varieties are singled out in section \ref{sec:examples}.

In recent work, Webster \cite{W} (building on earlier work by Zheng \cite{Z}) constructed 2-representations of the 2-categories of Rouquier and Khovanov-Lauda on certain categories of perverse sheaves on Lusztig quiver stacks. The Nakajima quiver varieties we consider can be thought of as cotangent bundles to the Lusztig quiver stacks, and we expect that these two constructions could be related by developing a mixed Hodge module version of the Webster-Zheng construction. There would then be a forgetful functor to Webster's categories and an associated graded functor to our categories. In a forthcoming paper \cite{ck4}, the first two authors will describe the precise relationship between the Webster-Zheng construction and our construction in the case $\g = \sl_2$.

\subsection{Acknowledgements}
We would like to thank Alexander Braverman, Hiraku Nakajima, and Raphael Rouquier for helpful discussions. S.C. was supported by NSF Grant 0801939/0964439 and J.K. by NSERC. A.L. would also like to thank the Max Planck Institute in Bonn for support during the 2008-2009 academic year.


\section{Geometric categorical $\g$ actions}\label{sec:geom-cat}

In this section we review the definition of $U_q(\g)$ and then recall the definition of a geometric categorical $\g$ action from \cite{ck3}.  

\subsection{The quantized enveloping algebras $U_q(\g)$}\label{sec:Kac-Moody}

First we review the definition of a simply-laced quantized enveloping algebra $U_q(\g)$. Fix a finite graph $\Gamma = (I,E)$ without edge loops or multiple edges.  In addition, fix the following data.
\begin{enumerate}
\item a free $\Z$ module $X$  (the weight lattice),
\item for $i \in I$ an element $\alpha_i \in X$ (simple roots),
\item for $i \in I$ an element $\Lambda_i \in X$ (fundamental weight),
\item a symmetric non-degenerate bilinear form $\la \cdot,\cdot \ra$ on $X$.
\end{enumerate}
These data should satisfy:
\begin{enumerate}
\item the set $\{\alpha_i\}_{i \in I}$ is linearly independent.
\item We have $\la \alpha_i, \alpha_i \ra = 2$, while for $i \neq j$, $\la \alpha_i, \alpha_j \ra = \la \alpha_j, \alpha_i \ra \in \{0,-1\}$, the value depending on whether or not $i,j \in I$ are joined by an edge.  The matrix $C$ with $C_{i,j} = \la \alpha_i, \alpha_j \ra$ is known as the Cartan matrix associated to $\Gamma$.
\item $\la \Lambda_i, \alpha_j \ra = \delta_{i,j}$ for all $i, j \in I$.
\item $\dim X = |I| + \mathrm{corank}(C)$.
\end{enumerate}

Let $ \h = X \otimes_\Z \C $ and let $ \h' = \spn(\Lambda_i) \subset \h $.

Let $U_q(\g)$ denote the quantized universal enveloping algebra of the Kac-Moody Lie algebra $\g$. It is defined as the $\C(q)$-algebra generated by $\{e_i,f_i\}_{i\in I}$ and $ \{ q^h\}_{h\in \h^*}$ with relations
\begin{itemize}
\item $q^0=1$, and $q^{h_1+h_2} = q^{h_1}q^{h_2}$ for $ h_1, h_2 \in \h^*$.
\item $q^he_iq^{-h} = q^{\langle h, \alpha_i \rangle} e_i$ and 
$q^hf_iq^{-h} = q^{-\langle h,\alpha_i \rangle} f_i$ for $i\in I$ and $h\in \h^*$.
\item $[e_i,f_j] = \delta_{i,j} \frac{q^{\h_i} - q^{-h_i}}{q-q^{-1}}$ for $i,j\in I$.
\item $[e_i,e_j] = [f_i,f_j] = 0$, if $\langle \alpha_i, \alpha_j \rangle = 0$ 
\item $e_ie_je_i = e_i^{(2)}e_j + e_je_i^{(2)}$ and $f_if_jf_i = f_i^{(2)}f_j + f_j f_i^{(2)}$, if $\langle \alpha_i,\alpha_j\rangle = -1$. Here $e_i^{(2)} = \frac{e_i^2}{2} $, $f_i^{(2)} = \frac{f_i^2}{2}$ denote the divided powers.
\end{itemize}

The algebra $U_q(\g)$ has a triangular decomposition 
$U_q(\g) \simeq U^+ \otimes U^0 \otimes U^-$
where $U^+$ is generated by $e$'s, $U^-$ by $f$'s and $U^0$ by $h$'s.  

Lusztig's modified enveloping algebra $\sU_q(\g)$ is defined by replacing $U^0$ with a direct sum of one dimensional algebras
$$\sU_q(\g) = U^+ \otimes \big( \bigoplus_{\l \in X} \C a_\l \big) \otimes U^-,$$
where the multiplication is defined as follows:
$$ a_\lambda a_\mu = \delta_{\lambda,\mu} a_\lambda, $$
$$ e_i a_\lambda = a_{\lambda + \alpha_i} e_i, \ \ f_i a_\lambda = a_{\lambda-\alpha_i} f_i,$$
\begin{equation}\label{Commutator relation}
    (f_je_i - e_if_j) a_\l = \delta_{i,j} [\langle h_i,\l\rangle] a_\l.
\end{equation}
In the last line above, $[\langle h_i,\l\rangle]$ denotes the quantum integer (not a commutator).  
Since quantum integers don't play a large role in the rest of the paper, we hope this brief overuse of notation does not cause too much trouble; in the rest of the paper, brackets $[n]$ will denote a grading shift by $n$, not a quantum integer.
To simplify notation we will use the notation $e_i(\l) := e_i a_\l$ and $f_i(\l) := a_\l f_i$. So, for instance, the third relation (\ref{Commutator relation}) above becomes 
$$f_j(\l)e_i(\l) - e_i(\l - \alpha_j) f_j(\l - \alpha_j) = \delta_{i,j} [\la h_i, \l \ra].$$

\begin{Remark}
It is sometimes useful to think of $\sU_q(\g)$ as a category. The objects are weights $\l \in X$ and the morphisms are 
$$\Hom_{\sU_q(\g)}(\lambda,\mu) = a_{\lambda} \sU_q(\g) a_\mu $$
with composition given by multiplication. In this framework the idempotent $a_\l$ should be thought of as projection to the object $\l \in X$, and a representation of $\sU_q(\g)$ is the same thing as a representation of $U_q(\g)$ with a weight space decomposition.  Since all the representations considered in this paper have weight space decompositions, it is sometimes convenient to think of them as representations of $\sU_q(\g)$ rather than $U_q(\g)$.  From this point of view, it is natural that categorifications of $\sU_q(\g)$ and its representations will involve 2-categories.
\end{Remark}

\subsection{Notation and Fourier-Mukai transforms}\label{sec:notation}

All our quiver varieties come equipped with a natural $\C^\times$ action. If a variety $Y$ carries a $\C^\times$ action we denote by $\O_Y\{k\}$ the structure sheaf of $Y$ with non-trivial $\C^\times$ action of weight $k$. More precisely, if $f \in \O_Y(U)$ is a local function then, viewed as a section $f' \in \O_Y\{k\}(U)$, we have $t \cdot f' = t^{-k}(t \cdot f)$. If $\mathcal{M}$ is a $\C^\times$-equivariant coherent sheaf then we define $\mathcal{M}\{k\} := \mathcal{M} \otimes \O_Y \{k\}$. 

If $X$ is a smooth variety equipped with a $\C^\times$ action we will denote by $D(X)$, the bounded derived category of $\C^\times$-equivariant coherent sheaves on $X$. In a few instances, such as section \ref{sec:irreps}, $D(X)$ will denote the usual, non-equivariant, derived category. If $\sP$ is an object in $D(X)$ then we denote its homology by $\sH^*(\sP)$ (these are sheaves on $X$). 

Every operation in this paper, such as pushforward or pullback or tensor, will be derived. Given an object $\sP \in D(X \times Y)$ whose support is proper over $Y$ we obtain a Fourier-Mukai transform (functor)
$$\Phi_{\sP}: D(X) \rightarrow D(Y), \ \ (\cdot) \mapsto p_{2*}(p_1^* (\cdot) \otimes \sP).$$  
One says that $\sP$ is the kernel which induces $\Phi_{\sP}$.

The right and left adjoints $\Phi_{\sP}^R$ and $\Phi_{\sP}^L$ are induced by 
$$\sP_R := \sP^\vee \otimes p_2^* \omega_X [\dim(X)] \text{ and } \sP_L := \sP^\vee \otimes p_1^* \omega_Y [\dim(Y)]$$
respectively (see also \cite{ck1} section 3.1).

Suppose $\sP \in D(X \times Y)$ and $\sQ \in D(Y \times Z)$ are kernels. Then 
$$\Phi_{\sQ} \circ \Phi_{\sP} \cong \Phi_{\sQ * \sP}: D(X) \rightarrow D(Z)$$
where 
$$\sQ * \sP = p_{13*}(p_{12}^* \sP \otimes p_{23}^* \sQ)$$
is the convolution product of $\sP$ and $\sQ$. The operation $*$ is associative. Moreover by \cite{H} remark 5.11, we have $ (\sQ * \sP)_R \cong \sP_R * \sQ_R $ and $ (\sQ * \sP)_L \cong \sP_L * \sQ_L $.

A final piece of notation that we will use is $H^*(\p^n) $ for the symmetric bigraded cohomology of $\p^n $. In other words 
$$ H^\star(\p^n) = \C[-n]\{n\} \oplus \C[-n+2]\{n-2\} \oplus \cdots \C[n]\{-n\}.$$

\subsection{Geometric categorical $\g$ actions}

We now recall the definition of a geometric categorical $\g$ action from \cite{ck3}.

A {\bf geometric categorical $\g$ action} consists of the following data.
\begin{enumerate}
\item A collection of connected smooth complex varieties $Y(\l)$ for $\l \in X$.
\item Kernels 
\begin{equation*}
\sE^{(r)}_i(\l) \in D(Y(\l) \times Y(\l + r\alpha_i)) \text{ and } \sF^{(r)}_i(\l) \in D(Y(\l + r\alpha_i) \times Y(\l))
\end{equation*}
We will usually write just $\sE^{(r)}_i$ and $\sF^{(r)}_i$ to simplify notation whenever possible. 
\item For each $\l$, a flat family $\tY(\l) \rightarrow \h'$, where the fibre over $0 \in \h'$ is identified with $Y(\l)$.
\end{enumerate}

Denote by $\tY_i(\l) \rightarrow \mathrm{span}(\Lambda_i) \subset \h'$ the restriction of $\tY(\l)$ to $\mathrm{span}(\Lambda_i)$ (this is a one parameter deformation of $Y(\l)$). 

On this data we impose the following conditions. 
\begin{enumerate}
\item Each $\Hom$ space between two objects in $D(Y(\l))$ is finite dimensional. In particular, this implies that $\End(\O_{Y(\l)}) = \C \cdot I$. \label{co:trivial}
\item All $\sE_i^{(r)}$s and $\sF_i^{(r)}$s are sheaves (i.e. complexes supported in cohomological degree zero). \label{co:sheaves}
\item $\sE^{(r)}_i(\l)$ and $\sF^{(r)}_i(\l)$ are left and right adjoints of each other up to a specified shift. More precisely \label{co:adjoints}
\begin{enumerate}
\item $\sE^{(r)}_i(\l)_R = \sF^{(r)}_i(\l) [r(\la \l, \alpha_i \ra + r)]\{-r(\la \l, \alpha_i \ra + r)\}$ 
\item $\sE^{(r)}_i(\l)_L = \sF^{(r)}_i(\l) [-r(\la \l, \alpha_i \ra + r)]\{r(\la \l, \alpha_i \ra + r)\}$.
\end{enumerate}
\item For each $ i \in I $, \label{co:EE}
$$\sH^*(\sE_i * \sE^{(r)}_i) \cong \sE^{(r+1)}_i \otimes_\k H^\star(\p^r).$$

\item If $\la \l, \alpha_i \ra \le 0$ then \label{co:EF}
$$\sF_i(\l) * \sE_i(\l) \cong \sE_i(\l-\alpha_i) * \sF_i(\l-\alpha_i) \oplus \sP$$
where $\sH^*(\sP) \cong \O_\Delta \otimes_\k H^\star(\p^{- \la \l, \alpha_i \ra - 1})$. 

Similarly, if $\la \l, \alpha_i \ra \ge 0$ then 
$$\sE_i(\l-\alpha_i) * \sF_i(\l-\alpha_i) \cong \sF_i(\l) * \sE_i(\l) \oplus \sP'$$
where $\sH^*(\sP') \cong \O_\Delta \otimes_\k H^\star(\p^{\la \l, \alpha_i \ra - 1})$.

\item We have \label{co:EEdef}
\begin{equation*} 
\sH^*(i_{23*} \sE_i * i_{12*} \sE_i) \cong \sE_i^{(2)}[-1]\{1\} \oplus \sE_i^{(2)}[2]\{-3\}
\end{equation*}
where $i_{12}$ and $i_{23}$ are the closed immersions
\begin{align*} 
i_{12}: Y(\l) \times Y(\l+ \alpha_i) &\rightarrow Y(\l) \times \tY_i(\l+\alpha_i) \\
i_{23}: Y(\l + \alpha_i) \times Y(\l + 2\alpha_i) &\rightarrow \tY_i(\l+\alpha_i) \times Y(\l+2\alpha_i).
\end{align*}

\item \label{co:contain} If $\la \l, \alpha_i \ra \le 0$ and $k \ge 1$ then the image of $\supp(\sE^{(r)}(\l-r \alpha_i))$ under the projection to $Y(\l)$ is not contained in the image of $\supp(\sE^{(r+k)}(\l-(r+k) \alpha_i))$ also under the projection to $Y(\l)$. 
Similarly, if $\la \l, \alpha_i \ra \ge 0$ and $k \ge 1$ then the image of $\supp(\sE^{(r)}(\l))$ in $Y(\l)$ is not contained in the image of $\supp(\sE^{(r+k)}(\l))$.

\item \label{co:EiEj} If $i \ne j \in I$ are joined by an edge in $\Gamma$ then  
$$\sE_i * \sE_j * \sE_i \cong \sE_i^{(2)} * \sE_j \oplus \sE_j * \sE_i^{(2)}$$
while if they are not joined then $\sE_i * \sE_j \cong \sE_j * \sE_i$. 

\item If $i \ne j \in I$ then $\sF_j * \sE_i \cong \sE_i * \sF_j$. \label{co:EiFj}

\item \label{co:Eidef} For $i \in I$ the sheaf $\sE_i$ deforms over $\alpha_i^\perp$ to some 
$$\tsE_i \in D(\tY(\l)|_{\alpha_i^\perp} \times_{\alpha_i^\perp} \tY(\l+\alpha_i)|_{\alpha_i^\perp}).$$

\item \label{co:Eijdef}
Suppose $i \ne j \in I$ are joined by an edge.  By Lemma \ref{lem:homs1}, there exists a unique (up to scalar) non-zero map $T_{ij}: \sE_i * \sE_j [-1]\{1\} \rightarrow \sE_j * \sE_i$, and we denote the cone of this map by 
$$\sE_{ij} := \Cone \left( \sE_i * \sE_j [-1]\{1\} \xrightarrow{T_{ij}} \sE_j * \sE_i \right) \in D(Y(\l) \times Y(\l+\alpha_i+\alpha_j)).$$
We then require that $\sE_{ij}$ deforms over $B := (\alpha_i+\alpha_j)^\perp \subset \h'$ to some 
$$\tsE_{ij} \in D(\tY(\l)|_B \times_{B} \tY(\l+\alpha_i+\alpha_j)|_B).$$
\end{enumerate}
\begin{Remark}
Conditions (\ref{co:trivial}), (\ref{co:sheaves}), (\ref{co:adjoints}), (\ref{co:contain}) are technical conditions. Conditions (\ref{co:EE}), (\ref{co:EF}), (\ref{co:EiEj}), (\ref{co:EiFj}) are categorical versions of the relations in the usual presentation of the Kac-Moody Lie algebra $\g$.  Note that we only impose (\ref{co:EE}) and (\ref{co:EF}) at the level of cohomology of complexes; thus they are much easier to check in examples than analogous conditions at the level of isomorphisms of complexes which one could consider imposing. Conditions (\ref{co:EEdef}), (\ref{co:Eidef}) and (\ref{co:Eijdef}) relate to the deformation.

The conditions (\ref{co:trivial}) - (\ref{co:contain}) say that the varieties $\{Y(\l+n\alpha_i)\}_{n \in \Z}$, together with the functors $\sE_i$ and $\sF_i$ and deformations $\tY_i(\l+n\alpha_i)$ generate a geometric categorical $\sl_2$ action. Relations (\ref{co:EiEj}) - (\ref{co:Eijdef}) then describe how these various $\sl_2$ actions are related.  See \cite{ck3} for more discussion about these conditions, especially regarding the role of the deformations $ \tY $.
\end{Remark}
\begin{Remark}
One can compare the geometric definition above to the notion of a 2-representation of $ \g $ in the sense of Rouquier \cite{R}, which in turn is very similar to the notion of an action of Khovanov-Lauda's 2-category \cite{kl}.  In these definitions, there are functors $ \E_i, \F_i $ as well as some natural transformations between these functors.  The additional data of our deformations can be compared to the additional deformation of these natural transformations.  In the case of $ \g = \sl_2 $, this has been made precise in \cite{ckl2}, which says that a geometric categorical $\sl_2$ action induces a 2-representation of Rouquier's 2-category.
\end{Remark}

We say that a geometric categorical $\g$-action is {\bf integrable} if for every weight $\l$ and $i \in I$ we have $Y(\l + n\alpha_i) = \emptyset$ for $n \gg 0$ or $n \ll 0$. From here on we assume all actions are integrable. 

We recall the following result from \cite{ck3}, which is actually an easy consequence of the main results of \cite{ckl2}.  

\begin{Theorem} \label{th:geomtonaive}
If $ \{ Y(\l) \} $ is a geometric categorical $ \g $-action, then the Fourier-Mukai transforms $ \E_i^{(r)} $ and $\F_i^{(r)} $ give a naive categorical $ \g $ action.  In particular,
\begin{enumerate}
\item $\E_i \circ \E_i^{(r)} \cong \E_i^{(r)} \circ \E_i^{(r+1)} \otimes_\C H^\star(\p^r)$, and similarly with $\E$ replaced by $\F$,
\item $\F_i \circ \E_i \cong \E_i \circ \F_i \oplus \id_{Y(\l)} \otimes_\C H^\star(\p^{-\la \l, \alpha_i \ra -1})$ if $\la \l, \alpha_i \ra \le 0$ and similarly if $\la \l, \alpha_i \ra \ge 0$,
\item $\E_i \circ \E_j \circ \E_i \cong \E_i^{(2)} \circ \E_j \oplus \E_j \circ \E_i^{(2)}$ if $\la \alpha_i,\alpha_j \ra = -1$, and $\E_i \circ \E_j \cong \E_j \circ \E_i$ if $\la \alpha_i,\alpha_j \ra = 0$,
\item $\F_j \circ \E_i \cong \E_i \circ \F_j$ if $i \ne j$.
\end{enumerate}
Hence the endomorphisms of the Grothendieck group $\bigoplus_\l K^{\C^\times}(Y(\l))$ induced by $\E_i$ and $\F_i $ define a representation of $ U_q(\g) $.
\end{Theorem}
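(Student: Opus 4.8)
The plan is to first upgrade the categorical relations (1)--(4) to genuine isomorphisms of Fourier--Mukai functors, and then to descend these to $K$-theory. Two of the four relations require essentially no work. Conditions (\ref{co:EiEj}) and (\ref{co:EiFj}) are already stated as isomorphisms of \emph{kernels}, and since convolution of kernels corresponds to composition of the induced functors (Section \ref{sec:notation}), applying $\Phi_{(-)}$ turns them directly into the functor isomorphisms of relations (3) and (4): the Serre isomorphism $\E_i \circ \E_j \circ \E_i \cong \E_i^{(2)} \circ \E_j \oplus \E_j \circ \E_i^{(2)}$ (resp. $\E_i \circ \E_j \cong \E_j \circ \E_i$ when $i,j$ are not joined) and the commuting isomorphism $\F_j \circ \E_i \cong \E_i \circ \F_j$ for $i \neq j$.

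The substantive content is relations (1) and (2). Here the definition supplies only the isomorphisms (\ref{co:EE}) and (\ref{co:EF}) at the level of cohomology sheaves $\sH^*$, whereas a naive categorical action demands them as isomorphisms of complexes, i.e. of functors. To bridge this gap I would use the observation recorded after the definition: for each fixed $i \in I$, conditions (\ref{co:trivial})--(\ref{co:contain}) assert exactly that the varieties $\{Y(\l + n\alpha_i)\}_n$, together with $\sE_i, \sF_i$ and the one-parameter deformations $\tY_i$, form a geometric categorical $\sl_2$ action. Feeding this into the main results of \cite{ckl2} rigidifies the cohomology-level statements into honest isomorphisms of complexes, yielding (1) and (2) along each $\sl_2$-string and hence for all of $\g$. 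This rigidification is the only genuinely delicate step, and it is the part I expect to be the main obstacle; but in the present setting the hard work has been carried out in \cite{ckl2}, so here it reduces to verifying that the $\sl_2$ hypotheses hold and quoting that theorem.

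Finally I would pass to the Grothendieck group $\bigoplus_\l K^{\C^\times}(Y(\l))$, a module over $K^{\C^\times}(\mathrm{pt})$ on which the functors $\E_i, \F_i$ induce endomorphisms. Under the conventions of Section \ref{sec:notation} the cohomological shift contributes a sign, the internal shift $\{1\}$ a power of $q$, and the bigraded space $H^\star(\p^n)$ the quantum integer $[n+1]$. Letting $q^h$ act on the summand $K^{\C^\times}(Y(\l))$ by $q^{\la h, \l \ra}$ --- forced by the fact that $\E_i$ raises the weight by $\alpha_i$ --- the $K$-theoretic shadows of (1)--(4) reproduce precisely the defining relations of $U_q(\g)$ from Section \ref{sec:Kac-Moody}: relation (1) is the divided-power identity that makes $[\E_i^{(r)}]$ the $r$-th divided power of $[\E_i]$ (so that the Serre relation (3) takes its standard form), relation (2) is the commutator relation (\ref{Commutator relation}), and relation (4) together with the commuting case of (\ref{co:EiEj}) gives the vanishing of the remaining commutators. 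Since $U_q(\g)$ is presented by these generators and relations, $e_i \mapsto [\E_i]$, $f_i \mapsto [\F_i]$, $q^h \mapsto (q^{\la h, \l \ra})$ extends to an algebra homomorphism, i.e. a representation. I note that this last conclusion does not actually require the complex-level upgrade of the previous paragraph: since $[\sP] = \sum_i (-1)^i [\sH^i(\sP)]$ in $K$-theory, the cohomology-level isomorphisms (\ref{co:EE}) and (\ref{co:EF}) already give the needed operator identities directly.
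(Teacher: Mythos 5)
Your proposal follows essentially the same route as the paper: the paper recalls this theorem from \cite{ck3} as ``an easy consequence of the main results of \cite{ckl2}'', which is exactly your strategy --- relations (3) and (4) are immediate from the kernel-level conditions (\ref{co:EiEj}) and (\ref{co:EiFj}), while the delicate relations (1) and (2) are obtained by observing that for each fixed $i$ the data form a geometric categorical $\sl_2$ action and then quoting \cite{ckl2} to upgrade the cohomology-level conditions (\ref{co:EE}) and (\ref{co:EF}) to isomorphisms of functors. The passage to $K$-theory and the matching with the defining relations of $U_q(\g)$ is the same standard bookkeeping in both treatments.
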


The main result of \cite{ck3} is that a geometric categorical $ \g $ action gives rise to a braid group action.  More precisely, in \cite{ckl3}, we constructed (following Chuang-Rouquier \cite{CR}) explicit autoequivalences $\T_i : D(Y(\l)) \rightarrow D(Y(s_i \lambda)) $ for each $ i \in I $, and in \cite{ck3} we proved that these equivalences satisfy the braid relations.

\begin{Theorem} \label{th:geomtobraid}
If $ \{Y(\l)\} $ is a geometric categorical $ \g $-action, then there is an action of the braid group $B_\Gamma $ on $\oplus D(Y(\l)) $ compatible with the action of the Weyl group on the set of weights.  On the level of the Grothendieck groups, this action descends to the action of Lusztig's quantum Weyl group.
\end{Theorem}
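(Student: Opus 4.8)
The plan is to construct, for each vertex $i \in I$ and each weight $\l$, an explicit autoequivalence $\T_i \colon D(Y(\l)) \to D(Y(s_i\l))$ as a convolution (Rickard) complex whose terms are the Fourier--Mukai kernels $\sF_i^{(n+k)} * \sE_i^{(k)}$ built from the divided-power functors, following Chuang--Rouquier \cite{CR}; here, on the weight $\l$ with $n = \la \l, \alpha_i \ra \ge 0$, the index runs over $k \ge 0$ and the terms carry appropriate grading shifts $[\cdot]\{\cdot\}$ so that the total kernel lives in $D(Y(\l) \times Y(s_i\l))$. The first step is to check that each $\T_i$ is an equivalence. By the remark following the definition, the subcollection $\{Y(\l+m\alpha_i)\}_{m \in \Z}$ together with $\sE_i, \sF_i$ and the one-parameter deformations $\tY_i$ forms a geometric categorical $\sl_2$ action, so invertibility of $\T_i$ is exactly the main theorem of the $\sl_2$ theory of \cite{ckl3}: the inverse is the analogous complex read in the opposite direction, and the cancellations needed to contract the composite come from conditions (\ref{co:EE}) and (\ref{co:EF}).

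Next I would verify the braid relations, splitting into two cases according to the Dynkin diagram. When $i$ and $j$ are not joined by an edge, conditions (\ref{co:EiEj}) and (\ref{co:EiFj}) give $\sE_i * \sE_j \cong \sE_j * \sE_i$ and $\sF_j * \sE_i \cong \sE_i * \sF_j$, so the two Rickard complexes defining $\T_i$ and $\T_j$ commute term by term and $\T_i \T_j \cong \T_j \T_i$ follows. When $i$ and $j$ are joined, the goal is $\T_i \T_j \T_i \cong \T_j \T_i \T_j$. Here I would reduce to the rank-two ($\sl_3$) situation, since the kernels entering both triple composites involve only $\sE_i, \sF_i, \sE_j, \sF_j$; the categorical Serre relation (\ref{co:EiEj}) together with the commutation (\ref{co:EiFj}) then controls the terms of the composite complexes. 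The essential input is the deformed kernel $\tsE_{ij}$ of condition (\ref{co:Eijdef}) (and $\tsE_i$ of (\ref{co:Eidef})), which pins down the otherwise ambiguous connecting map $T_{ij}$ and lets one assemble an explicit homotopy equivalence between the two length-three complexes.

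Compatibility with the Weyl group action is immediate from the construction, since $\T_i$ carries $D(Y(\l))$ to $D(Y(s_i\l))$, so the braid group $B_\Gamma$ acts through functors intertwining the action of the Weyl group $W$ on the indexing weights. For the statement on Grothendieck groups, I would pass to $K^{\C^\times}(Y(\l))$, on which $\sE_i, \sF_i$ induce the $U_q(\g)$-action of Theorem \ref{th:geomtonaive}, with the cohomological shift $[1]$ contributing a sign and the equivariant shift $\{1\}$ contributing a power of $q$. The class $[\T_i]$ is then the alternating sum of the classes of the terms of the Rickard complex, and a direct bookkeeping of these signs and $q$-powers matches it term by term with Lusztig's quantum Weyl group symmetry $T_i$ acting on the integrable $\sU_q(\g)$-module $\bigoplus_\l K^{\C^\times}(Y(\l))$.

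The main obstacle is the braid relation in the adjacent case, $\T_i \T_j \T_i \cong \T_j \T_i \T_j$. Unlike the commuting case this cannot be seen term by term; one must produce a genuine isomorphism of convolution complexes, and the correct identifications of the middle terms rely crucially on the rigidity imposed by the deformations $\tsE_i$ and $\tsE_{ij}$. Controlling these maps, and in particular ruling out spurious homotopies by means of the finiteness and support conditions (\ref{co:trivial}) and (\ref{co:contain}), is the technical heart of the argument and is where the bulk of the work in \cite{ck3} is concentrated.
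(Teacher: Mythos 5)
Your proposal takes essentially the same route as the paper: Theorem \ref{th:geomtobraid} is quoted there from \cite{ckl3} and \cite{ck3}, where the $\T_i$ are exactly the Chuang--Rouquier Rickard complexes you describe, invertibility is the $\sl_2$ result of \cite{ckl3} applied to each subcollection $\{Y(\l+m\alpha_i)\}$, and the braid relations (with the adjacent-vertex case resting on the rank-2 conditions and the deformations $\tsE_i$, $\tsE_{ij}$) together with the descent to Lusztig's quantum Weyl group on K-theory are the content of \cite{ck3}. Your sketch matches that argument and correctly identifies the adjacent-vertex braid relation, controlled by the deformation data, as the technical heart.
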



\section{Categorical $\g$ actions on quiver varieties} \label{se:catgaction}

In this section we define the quiver varieties, their deformations and the Hecke correspondences. We then state our main result (\ref{thm:main}) which states that this data yields a categorical $\g$ action.   

\subsection{Quiver varieties} \label{se:quiverdef}

We fix as in section \ref{sec:geom-cat} a finite graph $\Gamma = (I,E)$. Let $H$ be the set of pairs consisting of an edge together with an orientation on that edge.  For $h \in H$, we write $in(h)$ (resp. $out(h)$) for the incoming (outgoing) vertex of $h$. Fix an orientation $\Omega$ on $\Gamma$; that is, fix a subset $\Omega \subset H$ such that $E = \Omega \cap \overline{\Omega}$, where $\overline{\Omega}$ is the complement of $\Omega$ in $H$.  For $h \in \Omega$, we write $\overline{h} \in \overline{\Omega}$ for the same edge with the reversed orientation.  

We recall the definition of Nakajima quiver varieties of simply-laced type, referring the reader to \cite{Nak98} for further details.  Let $V = \oplus_{i\in I} V_i$ be an $I$-graded $\C$-vector space.  The dimension $\dim(V)$ of $V$ is a vector
$$v = (v_i)_{i\in I} \in \mathbb{N}^{I}, \hskip0.5cm v_i = \dim(V_i).$$

Given two $I$-graded vector spaces $V,V'$, define vector spaces
$$ \mathrm{L}(V,V') = \bigoplus_{i\in I} \Hom(V_i,V'_i) \text{ and } 
\mathrm{E}(V,V') = \bigoplus_{h \in H} \Hom(V_{out(h)}, V'_{in(h)})$$

Let $V$ and $W$ be $I$-graded vector spaces with $\dim(V) = v$, $\dim(W) = w$. From now on we will fix $w$ but allow $v$ to vary. We define $\l := \Lambda_w - \alpha_v$ where $ \Lambda_w = \sum w_i \Lambda_i $ and $ \alpha_v = \sum v_i \alpha_i $. Since $w$ is fixed $\l$ and $v$ are always related as above so they will be used interchangeably. We define
$$\mathrm{M}(\l) := \mathrm{E}(V,V) \oplus \mathrm{L}(W,V) \oplus \mathrm{L}(V,W).$$
An element of $\mathrm{M}(\l) $ will be denoted $(B_h)$ where $h \in H$, $ B_h \in \Hom(V_{out(h)}, V_{in(h)})$, or $h=p(i)$, $ B_{p(i)} : V_i \rightarrow W_i $, or $h = q(i)$, $ B_{q(i)} : W_i \rightarrow V_i $. 

The group $P = \prod_{i\in I} \GL(V_i)$ acts naturally on $\mathrm{M}(\l)$.  The moment map $\mu: \mathrm{M}(\l) \rightarrow \oplus_{i \in I} \gl(V_i) $ for this action is given by
$$ \mu(B) = \sum_{h_1, h_2 \in H} \epsilon(h_2) B_{h_2} B_{h_1} + \sum_{i \in I} B_{q(i)} B_{p(i)} $$ 
where $ \varepsilon : H \rightarrow \{1, -1 \} $ is defined by $ \varepsilon(h) = 1 $ if $ h \in \Omega $ and $\varepsilon(h) = -1 $ if $ h \in \Omega $.  

There are two natural quotients of the level set $\mu^{-1}(0)$ by the group $P$:
\begin{enumerate}
\item Let $\O(\mu^{-1}(0))$ denote the coordinate ring of the algebraic variety $\mu^{-1}(0).$
Then we have the quotient
\begin{equation*}
    \M_0(\l) = \mu^{-1}(0)//P = \mathrm{Spec} (\O(\mu^{-1}(0))^P).
\end{equation*}
\item Define a character $\chi: P \longrightarrow \C^*$ by $\chi(g) = \prod_i det(g_i^{-1})$ for $g = (g_i)_{i\in I}$.
Then we have the quotient
\begin{equation*}
\M(\l) = \Proj\bigoplus_{m=0}^{\infty}
\big\{f\in \O\big(\mu^{-1}(0)\big)
\mid f(g B)= \chi(g)^m f(B)\ \  \text{ for all }g\in P \big\}.
\end{equation*}
This second quotient is what we refer to as a {\bf quiver variety}.
\end{enumerate}

The quiver variety $\M(\l)$ has an alternative description using a stability condition.
\begin{Definition}
A point $B \in \mu^{-1}(0)$ is said to be \emph{stable} if the following condition holds: if a collection $S = \oplus_{i \in I} S_i$ of subspaces of $V = \oplus_{i \in I} V_i$ is $B_h$-invariant for each $ h \in H $ and $ S_i \subset ker(B_{p(i)}) $ for each $ i \in I $, then $ S = 0 $.
\end{Definition}

We denote by $\mu^{-1}(0)^s$ the set of stable points.  There is an isomorphism \cite{Nak98}
$$ \M(\l) \simeq \mu^{-1}(0)^s/P. $$ 
Moreover, the projection $\mu^{-1}(0)^s \rightarrow \M(\l)$ is a principal $P$ bundle.
The variety $\M(\l) $ is smooth of dimension 
$$ \dim \M(\l) = 2 \la \alpha_v, \Lambda_w \ra - \la \alpha_v, \alpha_v \ra = \la \alpha_v, \lambda \ra + \la \alpha_v, \Lambda_w \ra.$$

For $i \in I$, there is a tautological vector bundle
$$ \mu^{-1}(0)^s \times_P V_i \rightarrow \M(\l) $$ 
associated to the principal $P$-bundle $\mu^{-1}(0) \rightarrow \M(\l).$  We denote this vector bundle also by $V_i$; its fibre over a point $(B,V) \in \M(\l)$ is the vector space $V_i$ used in the definition of the quiver variety. On a product $ \M(\l) \times \M(\l') $ of two quiver varieties, we will often consider the pullbacks of these bundles from each factor, and denote them $ V_i := \pi_1^* V_i $ and $ V'_i = \pi_2^* V_i$. 

\subsection{Deformations of quiver varieties}\label{deformations}

Recall the moment map 
$$\mu: \mathrm{M}(\l) \rightarrow \bigoplus_{i \in I} \gl(V_i).$$
Each Lie algebra $ \gl(V_i) $ has a one-dimensional centre consisting of multiples of the identity matrix.  We define an isomorphism
\begin{equation*}
Z = Z( \bigoplus_{i \in I} \gl(V_i)) = \C^I \cong \mathfrak{h'}
\end{equation*}
using the basis for $ \h' $ consisting of the fundamental weights.  We define a deformation of $\M(\l)$ by 
$$\N(\l) := \mu^{-1}(\h')^s/P = \mu^{-1}(\h') // P.$$

\begin{Lemma} 
$\mu : \N(\l) \rightarrow \h'$ is a flat deformation of $\M(\l)$.
\end{Lemma}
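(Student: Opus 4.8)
The plan is to deduce flatness from the standard ``miracle flatness'' criterion: a morphism from a Cohen--Macaulay scheme to a regular scheme all of whose fibres have the same dimension is automatically flat. Since the base $\h'$ is an affine space, hence regular, it suffices to prove that the total space $\N(\l)$ is smooth and that every fibre of $\mu$ has dimension $\dim \N(\l) - \dim \h'$. The identification of the central fibre is immediate from the construction: the fibre over $0 \in \h'$ is $\mu^{-1}(0)^s/P = \M(\l)$, so $\N(\l) \to \h'$ is genuinely a deformation of $\M(\l)$ and it remains only to check flatness.

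First I would establish smoothness of the total space. Decompose $\bigoplus_i \gl(V_i) = \bigoplus_i \sl(V_i) \oplus Z$ into its semisimple part and its centre $Z \cong \h'$, and write $\mu = \mu_s \oplus \mu_Z$ accordingly, so that $\mu^{-1}(\h') = \mu_s^{-1}(0)$. The stability condition of the Definition above forces the $P$-action on the stable locus to be \emph{free}: if $g \in P$ fixes a stable point $B$, then each generalized eigenspace of $g$ for an eigenvalue $\ne 1$, as well as $\im(g_i - \id)$, is $B_h$-invariant and contained in $\ker(B_{p(i)})$, hence vanishes by stability, giving $g = \id$. Freeness implies that the infinitesimal action is injective, and the moment-map identity (the image of $d\mu_p$ is the annihilator of $\Lie(\mathrm{Stab}_P(p))$) then forces $d\mu$ to be surjective at every stable point. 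Consequently $\mu^{-1}(\h')^s = \mu_s^{-1}(0)^s$ is a smooth subvariety of $\mathrm{M}(\l)$, and $\N(\l)$, being the quotient of a smooth variety by a free $P$-action (a principal $P$-bundle, as in the $\zeta=0$ case), is smooth.

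Next I would compute the fibre dimensions. For each $\zeta \in \h'$ the fibre $\mu^{-1}(\zeta)^s/P$ is again a Nakajima quiver variety with shifted moment-map value, and surjectivity of $d\mu$ on the stable locus shows $\mu^{-1}(\zeta)^s$ is smooth of dimension $\dim \mathrm{M}(\l) - \dim \bigoplus_i \gl(V_i)$, \emph{independent} of $\zeta$. Dividing by the free $P$-action lowers dimension by $\dim P = \sum_i v_i^2$, so every fibre has the same dimension, equal to that of $\M(\l)$; a short bookkeeping with Nakajima's formula $\dim \M(\l) = \la \alpha_v, \l \ra + \la \alpha_v, \Lambda_w \ra$ confirms this equals $\dim \N(\l) - \dim \h'$. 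With $\N(\l)$ smooth, $\h'$ regular, and all fibres equidimensional of the expected dimension, miracle flatness yields that $\mu : \N(\l) \to \h'$ is flat.

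The main obstacle is not the flatness deduction itself, which is formal once the geometric input is assembled, but rather the two facts underlying that input: freeness of the $P$-action on the stable locus and the resulting surjectivity of $d\mu$ there. Together these give simultaneously the smoothness of $\N(\l)$ and the equidimensionality of the fibres, and both rest on Nakajima's analysis of stability in \cite{Nak98}. One must only be mildly careful with the bookkeeping when some $v_i = 0$: then $Z$ is the coordinate subspace of $\h' = \C^I$ spanned by the indices with $v_i \ge 1$, and the family is constant in the remaining directions, so flatness over all of $\h'$ is unaffected.
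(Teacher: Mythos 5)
Your proof is correct, but it takes a genuinely different route from the paper's. The paper's entire proof is one sentence: it cites \cite{Nak94} for the fact that the fibres of $\mu$ are all irreducible of dimension $\la \alpha_v, \lambda \ra + \la \alpha_v, \Lambda_w \ra$, and lets flatness follow from that (with the flatness criterion and the regularity of the total space left implicit). You instead manufacture the equidimensionality input yourself: stability forces the $P$-action on the stable locus to be free (your $\im(g_i - \id)$ argument is exactly Nakajima's), freeness forces $d\mu$ to be a submersion at stable points, and this simultaneously gives smoothness of $\N(\l) = \mu^{-1}(\h')^s/P$ and the constancy of the fibre dimensions; miracle flatness then closes the argument. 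What each approach buys: the paper's citation is shorter and yields strictly more (irreducibility of the fibres, a genuinely deep theorem of \cite{Nak94}), whereas your argument is self-contained, elementary, and makes explicit the Cohen--Macaulay/regular hypotheses that the paper's one-line deduction silently uses --- note that equidimensionality of fibres alone does not imply flatness without such a hypothesis on the total space, so spelling this out is a real merit. Your closing remark about the case $v_i = 0$ (where the centre of $\bigoplus_i \gl(V_i)$ is a proper coordinate subspace of $\h' = \C^I$, and the family must be taken constant in the missing directions) addresses an edge case that the paper's definition of $\N(\l)$ glosses over entirely.
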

\begin{proof}
This follows since, by \cite{Nak94}, the fibres of $\mu$ are all irreducible of dimension $ \la \alpha_v, \lambda \ra + \la \alpha_v, \Lambda_w \ra$. 
\end{proof}

\subsection{$\C^\times$-actions}\label{C^*action}
We define a $ \C^\times$-action on $ \M(\l) $ following Nakajima \cite{Nak00} (note that this is different than the $ \C^\times $-action from \cite{Nak98}).  We define the $ \C^\times $-action on $\mathrm{M}(\l)$ by $ t \cdot (B_h) = (tB_h) $.  This induces a $\C^*$ action on $\M(\l)$.  For each $ h \in H $, $ B_h $ defines naturally an equivariant map of vector bundles $ V_{out(h)} \rightarrow V_{in(h)}\{1\} $.

\subsection{The Hecke correspondences}

Fix a weight $\Lambda_w$. If another weight $\l$ is given by $ \lambda = \Lambda_w - \alpha_v $, 
then we say that $\l$ has associated dimension vector $v$.  If $\l$ has associated dimension vector $v$, then the dimension vector associated to $\l + r \alpha_i$ is $v - r e_i$. 
With this in mind, we recall the definition of the generalized Hecke correspondences
$$\B^{(r)}_i(\l) \subset \M(\l) \times \M(\l+r\alpha_i)$$
For simplicity, we will write $\B_i(\l) $ for $\B^{(1)}_i(\l)$.

The Hecke correspondence $\B^{(r)}_i(\l) \subset \M(\l) \times \M(\l+r\alpha_i)$ is the variety
$$\B^{(r)}_i(\l) = \{(B,V,S) \mid (B,V) \in \mathrm{M}(\l), S \subset V \text{ as \ below }\} / P$$
\begin{enumerate}
\item $(B,V)$ is stable,
\item $S$ is $B_h$-invariant for $ h \in H $ and contains the image of $ B_{q(i)} $, and $\dim(S) = v - r e_i$.
\end{enumerate}
Forgetting $S$ gives a map $\pi_1: \B^{(r)}_i(\l) \rightarrow \M(\l)$ while forgetting $V$ and restricting $B$ to $S$ gives $\pi_2: \B^{(r)}_i(\l) \rightarrow \M(\l+r\alpha_i)$. By \cite[Theorem 5.7]{Nak98}, this realizes $\B^{(r)}_i(\l)$ inside $\M(\l) \times \M(\l + r\alpha_i)$ as a smooth half-dimensional subvariety, which is Lagrangian when $\M(\l)$ and $\M(\l+r\alpha_i)$ are considered as symplectic manifolds.  Sometimes we will abuse notation and also write $ \B^{(r)}_i(\l) $ for the same variety viewed as a subvariety of $ \M(\l + r \alpha_i) \times \M(\l) $ after switching the factors.

\subsection{The geometric categorical $\mathfrak{g} $ action}\label{sec:action}
We are now in a position to define the geometric categorical $ \mathfrak{g} $ action on the derived categories of coherent sheaves on the quiver varieties. Recall that Nakajima constructed an action of $ \g $ on $ \oplus_\l H_*( \M(\l))$. In his construction, $H_*(\M(\l)) $ is the weight space of weight $\l$. Hence, in our geometric categorical action, we will set $ Y(\lambda) := \M(\l)$ and  $\tilde{Y}(\l) := \N(\l)$.  

We define 
$$\sE^{(r)}_i(\l) := \O_{\B^{(r)}_i(\l)} \otimes \det(V_i)^r \det(V'_i)^r \bigotimes_{in(h) = i} \det(V_{out(h)})^{-r} \{-rv_i\} \in D(\M(\l) \times \M(\l + r \alpha_i))$$
and 
$$\sF^{(r)}_i(\l) := \O_{\B^{(r)}_i(\l)} \otimes \det(V'_i/V_i)^{\langle \l,\alpha_i \rangle + r} \{r(v_i-r)\} \in D(Y(\l+r \alpha_i) \times Y(\l)).$$
We denote by $\E^{(r)}_i(\l)$ and $\F^{(r)}_i(\l)$ the functors induced by $\sE^{(r)}_i(\l)$ and $\sF^{(r)}_i(\l)$. 

\subsection{Main results}

The main result of this paper is the following. 

\begin{Theorem}\label{thm:main}
The varieties $ Y(\l) := \M(\l) $ along with kernels $ \sE_i^{(r)}(\l), \sF_i^{(r)}(\l) $ and deformations $\tY(\l) := \N(\l) \rightarrow \h'$ define a geometric categorical $\g$ action.
\end{Theorem}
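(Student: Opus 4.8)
The plan is to verify, one by one, the eleven conditions (\ref{co:trivial})--(\ref{co:Eijdef}) in the definition of a geometric categorical $\g$ action for the data $Y(\l) = \M(\l)$, $\tY(\l) = \N(\l)$, and the twisted Hecke kernels $\sE_i^{(r)}, \sF_i^{(r)}$, organizing them exactly as the remark following the definition suggests: the technical conditions, the single-node ($\sl_2$) conditions, and the two-node (rank two) conditions.

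I would begin with the technical conditions (\ref{co:trivial}), (\ref{co:sheaves}) and (\ref{co:adjoints}). Condition (\ref{co:sheaves}) is immediate: by construction each $\sE_i^{(r)}$ and $\sF_i^{(r)}$ is the structure sheaf of the smooth subvariety $\B_i^{(r)}(\l)$ twisted by a line bundle, hence a complex concentrated in degree zero. For (\ref{co:trivial}) I would use that each $\M(\l)$ is smooth and connected and carries the contracting $\C^\times$-action introduced above, whose fixed locus is proper; this forces the $\Hom$-spaces in $D(\M(\l))$ to be finite dimensional and $\End(\O_{\M(\l)}) = \C$. For the adjunctions (\ref{co:adjoints}) I would compute $\sE_R$ and $\sE_L$ from the formulas $\sP_R = \sP^\vee \otimes p_2^* \omega [\dim(X)]$ and $\sP_L = \sP^\vee \otimes p_1^* \omega [\dim(Y)]$, using that $\B_i^{(r)}(\l)$ is smooth of half the dimension of $\M(\l) \times \M(\l + r\alpha_i)$ by \cite{Nak98}. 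The line-bundle twists in the definitions of $\sE_i^{(r)}$ and $\sF_i^{(r)}$ are engineered precisely so that the canonical-bundle and tautological-bundle determinant factors produce exactly the shifts $[\pm r(\la \l,\alpha_i\ra+r)]\{\mp r(\la\l,\alpha_i\ra+r)\}$ demanded; this reduces to a determinant-of-normal-bundle computation along $\B_i^{(r)}(\l)$.

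Next I would dispatch the single-node conditions (\ref{co:EE}), (\ref{co:EF}), (\ref{co:EEdef}), (\ref{co:contain}) and (\ref{co:Eidef}), which together assert that the subfamily $\{\M(\l + n\alpha_i)\}_{n \in \Z}$ with kernels $\sE_i, \sF_i$ and deformation $\N(\l + n\alpha_i)|_{\mathrm{span}(\Lambda_i)}$ forms a geometric categorical $\sl_2$ action. The strategy is to reduce each to its counterpart established in our earlier $\sl_2$ work \cite{ckl1, ckl2, ckl3}. Fixing $w$ and the dimensions $v_j$ at all vertices $j \neq i$, the varieties, Hecke correspondences and twists appearing in these conditions depend only on the data at $i$: the space $V_i$, the framing $W_i$, and the fixed neighboring spaces $V_{out(h)}$ for edges $h$ incident to $i$. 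Absorbing $\bigoplus_{in(h)=i} V_{out(h)}$ into an enlarged framing identifies the vertex-$i$ correspondences with those of an $\sl_2$ quiver variety, so (\ref{co:EE}), (\ref{co:EF}), (\ref{co:EEdef}) and (\ref{co:contain}) follow from the $\sl_2$ case. Condition (\ref{co:Eidef}) is then the geometric statement that $\B_i(\l)$ deforms over $\alpha_i^\perp$: restricting the moment-map value to $\alpha_i^\perp$ leaves the $i$-component fixed, so the defining requirement that $S \subset V$ be invariant and contain $\im(B_{q(i)})$ continues to cut out a correspondence inside $\N(\l)|_{\alpha_i^\perp} \times_{\alpha_i^\perp} \N(\l+\alpha_i)|_{\alpha_i^\perp}$.

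The remaining conditions (\ref{co:EiEj}), (\ref{co:EiFj}) and (\ref{co:Eijdef}) are the genuinely new two-node content, and I expect them to be the main obstacle. For (\ref{co:EiFj}) with $i \neq j$, I would compute the convolutions $\sF_j * \sE_i$ and $\sE_i * \sF_j$ as pushforwards of structure sheaves of fibre products of the $i$- and $j$-Hecke correspondences, verify these fibre products have the expected dimension so that the convolution of structure sheaves is again a structure sheaf, and then match the two composites together with their line-bundle twists, using that modifying $V$ at $i$ and at $j$ are independent operations. The hardest piece is the Serre relation (\ref{co:EiEj}) for adjacent $i,j$ together with the cone deformation (\ref{co:Eijdef}): I would first reduce to the rank-two situation with only the vertices $i,j$ and the edge between them, then analyze the triple convolution $\sE_i * \sE_j * \sE_i$ and exhibit its splitting into $\sE_i^{(2)} * \sE_j$ and $\sE_j * \sE_i^{(2)}$, with the uniqueness of $T_{ij}$ from Lemma \ref{lem:homs1} fixing $\sE_{ij}$ and pinning down the decomposition. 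Finally, for (\ref{co:Eijdef}) I would deform $T_{ij}$ over $B = (\alpha_i+\alpha_j)^\perp$ on the deformed quiver varieties $\N(\l)|_B$ and take the cone of the deformed map, checking that it restricts to $\sE_{ij}$ over $0 \in B$. Establishing the transversality of the two-node fibre products, the direct-sum splitting of the triple convolution, and the fact that the cone $\sE_{ij}$ deforms (cones not being automatically deformable) are the crux of the whole argument.
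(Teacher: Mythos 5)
Your overall organization (technical conditions, then single-node, then rank-two) and your guiding idea for the $\sl_2$ relations --- reduce to cotangent bundles of Grassmannians by treating the neighbours of the vertex $i$ as extra framing --- are indeed the paper's, but as you state it the reduction has a genuine gap. It is not true that, with the data away from $i$ frozen, the Hecke correspondences ``depend only on the data at $i$'': the moment map equation at each vertex adjacent to $i$ involves the maps $B_h, B_{\bar{h}}$ along the edges at $i$, and the stability condition is global, so $\B_i^{(r)}(\l) \subset \M(\l) \times \M(\l + r\alpha_i)$ does not factor as an $\sl_2$ Hecke correspondence times fixed data. The paper's actual mechanism is a two-step comparison: it introduces the modified variety $\tM_i(\l)$, where only the vertex-$i$ moment map equation and a partial injectivity condition are imposed and which does factor as $T^\star \G(v_i,N_i) \times \mathrm{M}'_i(\l)$ (this is where your framing absorption is valid), together with the intermediate variety $\hM_i(\l) = \mu^{-1}(0)^s/\GL(V_i)$, which is simultaneously a principal $P_i$-bundle over $\M(\l)$ and a locally closed subvariety of $\tM_i(\l)$. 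The $\sl_2$ relations, known for the kernels $\tsE_i, \tsF_i$ on $\tM_i$ by \cite{ckl1,ckl2}, are transported to $\hsE_i, \hsF_i$ (hence to $\sE_i, \sF_i$ by $P_i$-equivariant descent) through a formalism of compatible kernels, and the geometric input making this work is Nakajima's transversality lemma \cite[Lemma 11.2.3]{Nak00}, which gives $(j_\l \times \id)^* \O_{\tB_i^{(r)}} \cong (\id \times j_{\l + r\alpha_i})_* \O_{\hB_i^{(r)}}$. Moreover, for condition (\ref{co:EF}) compatibility of objects is not enough: one must show that the canonical morphism $\hc: \hsE_i * \hsF_i \rightarrow \hsF_i * \hsE_i$ matches $\tc$ under these operations, which requires separate Hom-space computations. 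None of this machinery appears in your proposal, and without it the transfer from the $\sl_2$ case does not go through.

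The second gap is in condition (\ref{co:Eijdef}), which you rightly single out as a crux but propose to handle in a way that cannot work. One cannot ``deform $T_{ij}$ over $B = (\alpha_i+\alpha_j)^\perp$ and take the cone of the deformed map,'' because the source and target of $T_{ij}$, namely the convolutions $\sE_i * \sE_j$ and $\sE_j * \sE_i$, do not themselves exist over $B$: each $\sE_i$ only deforms over $\alpha_i^\perp$ (condition (\ref{co:Eidef})), so the convolutions only deform over the codimension-two subspace $\alpha_i^\perp \cap \alpha_j^\perp \subset \h'$, not over the hyperplane $B$. This is precisely why the condition is formulated for the cone rather than for the individual convolutions. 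The paper's key idea, absent from your proposal, is to prove that the cone is an honest sheaf: the exact sequence $0 \rightarrow \O_{\B_{ij}}(-D_{ij}) \rightarrow \O_{\B_{\{i,j\}}} \rightarrow \O_{\B_{ji}} \rightarrow 0$ identifies $\sE_{ij}$, up to a line bundle twist, with the structure sheaf of the reducible variety $\B_{\{i,j\}} = \B_{ij} \cup \B_{ji}$ obtained by dropping the vanishing conditions on $B_h, B_{\bar{h}}$; one then deforms this sheaf directly, by constructing the explicit family $\fC_{\{i,j\}} \rightarrow B$ (allowing moment-map values in $B$) and proving its flatness via a nontrivial dimension estimate. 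Two smaller discrepancies: your proposed ``reduction to the rank-two subquiver'' for the Serre relation is not available and is not what the paper does --- the rank-two relations are proved directly on the full quiver varieties; and your plan to exhibit the direct-sum splitting of $\sE_i * \sE_j * \sE_i$ geometrically is the longer computation the paper deliberately avoids, arguing instead via Lemmas \ref{lem:homs1} and \ref{lem:homs2} that the canonical maps $\sE_j * \sE_i^{(2)} \rightarrow \sE_i * \sE_j * \sE_i \rightarrow \sE_j * \sE_i^{(2)}$ (and likewise for $\sE_i^{(2)} * \sE_j$) compose to nonzero multiples of the identity, which forces the splitting since $\End(\sE_i * \sE_j * \sE_i) \cong \C^{\oplus 2}$.
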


Using Theorem \ref{th:geomtonaive}, we then obtain as a corollary a representation of $ U_q(\g) $  on the Grothendieck groups $\bigoplus_\lambda K(\M(\l))$. 

\begin{Proposition} \label{th:agreeNak}
The representation of $ U_q(\g) $ on $ \bigoplus_{\lambda} K^{\C^\times}(\M(\l))$ coming from Theorem \ref{thm:main} agrees (up to conjugation) with the one constructed by Nakajima in \cite{Nak00}. 
\end{Proposition}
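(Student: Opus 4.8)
The plan is to compare the two families of operators directly on equivariant $K$-theory, where both are built from the \emph{same} Hecke correspondences and differ only by invertible twists that can be absorbed into a diagonal change of basis.

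First I would record that, since passing to classes in $K$-theory is additive and a Fourier-Mukai transform induces on $K$-theory the convolution operator with the class of its kernel, the operator on $\bigoplus_\l K^{\C^\times}(\M(\l))$ coming from $\sE_i^{(r)}(\l)$ is
$$ [\mathcal{G}] \mapsto \pi_{2*}\big(\pi_1^*[\mathcal{G}] \otimes [\sE_i^{(r)}(\l)]\big), $$
and similarly for $\sF_i^{(r)}(\l)$. Here the equivariant shift $\{k\}$ becomes multiplication by a power of the equivariant parameter, i.e. a power of $q$; this is precisely how the quantum parameter of $U_q(\g)$ enters. Nakajima's operators in \cite{Nak00} have exactly the same shape: convolution with the class of the very same correspondence $\B_i^{(r)}(\l)$, twisted instead by his own choice of tautological line bundles and equivariant normalization. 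So the comparison is reduced to a comparison of twists.

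Next comes the key observation: every twisting factor appearing in $\sE_i^{(r)}(\l)$ and $\sF_i^{(r)}(\l)$ is pulled back from one of the two factors $\M(\l)$, $\M(\l+r\alpha_i)$. Indeed, $\det(V_i)^r$ and $\bigotimes_{in(h)=i}\det(V_{out(h)})^{-r}$ come from the source, $\det(V'_i)^r$ from the target, and the determinant of a quotient such as $V'_i/V_i$ is the ratio $\det(V'_i)\otimes\det(V_i)^{-1}$ of pullbacks from the two factors; the shifts $\{-rv_i\}$ and $\{r(v_i-r)\}$ are $q$-powers. By the projection formula the induced operator therefore factors as (multiplication by a target class) $\circ\ \Phi_{\O_{\B_i^{(r)}}}\ \circ$ (multiplication by a source class), and the same holds for Nakajima's normalization. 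The ratio of our twist to his is then again a product of a source class and a target class, so I would define, for each weight $\l$, an invertible operator $D_\l$ on $K^{\C^\times}(\M(\l))$ given by tensoring with an appropriate product of tautological determinant line bundles and a power of $q$, and check that conjugation by $D=(D_\l)_\l$ carries Nakajima's $e_i,f_i$ precisely to ours.

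The heart of the matter, and the step I expect to be the main obstacle, is verifying the \emph{global} consistency of $D$. Since conjugation modifies $\sE_i(\l)\colon \M(\l)\to\M(\l+\alpha_i)$ by $D_{\l+\alpha_i}$ on the target and $D_\l^{-1}$ on the source, matching both $e_i$ and $f_i$ for every $i\in I$ forces a single family $(D_\l)$ to account simultaneously for the source twist of $\sE_i(\l)$, the target twist of $\sE_i(\l-\alpha_i)$, the adjoint compatibility required for the $f_i$, and the analogue for the divided powers. This reduces to an explicit bookkeeping identity among the determinant classes and the $q$-shifts across each lattice line $\{\l+n\alpha_i\}$. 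I would carry it out by writing each twist additively in the Picard group (tensored with the group of $q$-powers), solving the resulting linear system for the exponents defining $D_\l$, and confirming that the solution is consistent for all generators at once; this is where one must carefully pin down Nakajima's sign and $q$ conventions from \cite{Nak00} and match the $\C^\times$-weight conventions fixed in Section~\ref{C^*action}. Once such a $D_\l$ is exhibited, agreement up to conjugation is immediate, and since both families satisfy the relations of $U_q(\g)$ (ours by Theorem~\ref{th:geomtonaive}), the operator $D$ is automatically an isomorphism of $U_q(\g)$-representations.
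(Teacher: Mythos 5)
Your strategy is the same as the paper's: both reduce the statement to the observation that the two sets of operators are convolutions with classes supported on the same Hecke correspondences $\B_i(\l)$, differing only by line-bundle twists and equivariant shifts pulled back from the two factors, and both then look for a diagonal family of invertible twists $(D_\l)$ conjugating Nakajima's normalization into ours. Your reduction to ``a comparison of twists'' and your remark that each twist factors as (source class) $\otimes$ (target class) are correct; in particular $\det(V'_i/V_i) = \det(V'_i)\otimes\det(V_i)^{-1}$ on the correspondence, as you say.

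However, the proposal stops exactly where the mathematical content of the proposition begins. You correctly flag the global consistency of $(D_\l)$ as the main obstacle---the system is a priori overdetermined, since one and the same $D_\l$ must serve as source twist for $\sE_i(\l)$, target twist for $\sE_i(\l-\alpha_i)$, and dually for the $f_i$, simultaneously for all $i \in I$---but you never solve it, only assert that you would. The paper resolves this by exhibiting the conjugator explicitly: tensoring on $D(\M(\l))$ by $\bigotimes_{l} \det(V_l)^{v_l}$ with equivariant shift $\left\{ - \lfloor \tfrac{\la \l, \l \ra}{2} \rfloor + 2 \sum_l v_l \right\}$, and then verifying by a direct line-bundle computation (using that $v_i' = v_i+1$ and that $V_l \cong V_l'$ for $l \ne i$ when restricted to $\B_i(\l)$) that conjugating Nakajima's kernel for $f_i$ yields precisely $\sF_i$, the case of $e_i$ being analogous. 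The existence of a consistent solution to your ``linear system'' is not automatic and is exactly what the proposition asserts; until you produce such a family and run the check, you have a correct plan rather than a proof. (Your final remark---that once the generators match, $D$ is automatically an intertwiner---is fine and is implicit in the paper as well.)
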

\begin{proof}
Nakajima's definition of $e_i$ and $f_i$ uses the same variety $\B_i(\l)$ as us but with line bundles 
$$(V_i/V'_i)^{-v_i} \det V_i \bigotimes_{in(h) = i} \det(V_{out(h)})^{-1} \{- \la \l, \alpha_i \ra - v_i - 1\}  \text{ and } (V'_i/V_i)^{\la \l, \alpha_i \ra + v_i} \det V_i \{\la \l, \alpha_i \ra + v_i\}$$
respectively. These are not quite the same as our line bundles.

On the other hand, consider the automorphisms of $D(\M(\l))$ obtained by tensoring with the line bundle $\otimes_{l} \det(V_l)^{v_l}$ shifted by $\{ - \lfloor \frac{\la \l, \l \ra}{2} \rfloor + 2 \sum_l v_l \}$. Conjugating Nakajima's definition of $f_i$ with this line bundle gives 
\begin{eqnarray*}
& & (V'_i/V_i)^{\la \l, \alpha_i \ra + v_i} \{\la \l, \alpha_i \ra + v_i \} \det V_i \bigotimes_l \det(V_l)^{v_l} \bigotimes_l \det(V_l')^{-v_l'} \{s\} \\
&\cong& (V'_i/V_i)^{\la \l, \alpha_i \ra} \det(V'_i)^{v_i - v_i'} \det(V_i)^{-v_i+v_i+1} \bigotimes_{l \ne i} \det(V_l)^{v_l} \det(V'_l)^{-v'_l} \{\la \l, \alpha_i \ra + v_i + s\} \\
&\cong& (V'_i/V_i)^{\la \l, \alpha_i \ra} \det(V'_i)^{-1} \det(V_i) \cong (V'_i/V_i)^{\la \l, \alpha_i \ra - 1} \{\la \l, \alpha_i \ra + v_i + s\}
\end{eqnarray*}
where 
$$s = - \lfloor \frac{\la \l, \l \ra}{2} \rfloor + 2 \sum_l v_l + \lfloor \frac{\la \l-\alpha_i, \l-\alpha_i \ra}{2} \rfloor - 2 \sum_l v'_l = - \la \l, \alpha_i \ra - 1.$$
This is the same as the line bundle we use to define $\F_i$. Here we used that $v_i' = v_i+1$ and that for $l \ne i$ we have $V_l \cong V_l'$ when restricted to our correspondence $\B_i(\l)$.

In the same way, it is an easy exercise to see that conjugating Nakajima's line bundle for $e_i$ also recovers the line bundle used to define our functor $\E_i$.
\end{proof}

Combining Theorem \ref{thm:main} with Theorem \ref{th:geomtobraid}, we immediately obtain an action of the braid group $B_\g$ on $\oplus_\l D(\M(\l))$ compatible with the action of the Weyl group on the set of weights. In section \ref{sec:affinebraid} we extend this to an affine braid group action (Theorem \ref{thm:affinebraid}). 

The next three sections are devoted to proving Theorem \ref{thm:main}. 

\section{The basic relations} \label{se:basicrel}
In this section, we will check the elementary conditions (\ref{co:trivial}) -- (\ref{co:adjoints}) in the definition of a geometric categorical $\g$ action. 

\subsection{Finite-dimensional Hom spaces}
We start with condition (\ref{co:trivial}).
\begin{Proposition}
For any two objects, $ \sA_1, \sA_2 $ of $ D(\M(\l))$, $ Hom(\sA_1, \sA_2) $ is a finite dimensional $ \C$-vector space.
\end{Proposition}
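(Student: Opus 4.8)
The plan is to reduce the computation to the affine base of the quiver variety and exploit the fact that the relevant $\C^\times$-action is contracting. Recall from Nakajima \cite{Nak98} that $\M(\l) = \Proj(\cdots)$ comes equipped with a projective, hence proper, morphism $\pi : \M(\l) \to \M_0(\l)$ to the affine quotient $\M_0(\l) = \mathrm{Spec}\, R$, where $R = \O(\mu^{-1}(0))^P$. First I would record that the $\C^\times$-action scaling $t \cdot (B_h) = (tB_h)$ descends to $\M_0(\l)$ and makes $R$ into a graded ring. The crucial structural observation is that this grading is contracting: $R$ is generated by $P$-invariant functions (traces of oriented cycles in the $B_h$ and their analogues passing through $W$), every one of which is homogeneous of nonzero $\C^\times$-weight of a single sign. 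Consequently $R$ is, up to the sign convention for $\{k\}$, non-negatively graded with $R_0 = \C$, and each graded component $R_n$ is finite-dimensional since it is spanned by the finitely many monomials of degree $n$ in a finite set of positive-degree generators.

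Next I would compute $\Hom$ homologically. Since $\M(\l)$ is smooth and $\sA_1, \sA_2$ are bounded complexes with coherent cohomology, the local complex $R\mathcal{H}om(\sA_1, \sA_2)$ lies in $D^b_{\mathrm{coh}}(\M(\l))$, and the proper pushforward $R\pi_*$ preserves coherence. As $\M_0(\l)$ is affine, global $\Ext$ is just the cohomology of global sections, so for each $k$
\[
\Ext^k(\sA_1, \sA_2) \;\cong\; \mathbb{H}^k\!\left(\M_0(\l),\, R\pi_* R\mathcal{H}om(\sA_1, \sA_2)\right)
\]
is a finitely generated graded $R$-module, the grading coming from $\C^\times$-equivariance. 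I would then invoke the elementary fact that over a graded ring $R$ with $R_0 = \C$ and finite-dimensional homogeneous components, every finitely generated graded module has finite-dimensional homogeneous components (each is $\sum_j R_{n-e_j} m_j$ for finitely many generators $m_j$ of degree $e_j$).

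To conclude, I would note that, working in the $\C^\times$-equivariant derived category $D(\M(\l))$, the space $\Hom(\sA_1, \sA_2)$ is precisely the weight-zero part of the graded vector space $\Ext^0(\sA_1, \sA_2)$, i.e. a single homogeneous component, and hence finite-dimensional by the previous step. Taking $\sA_1 = \sA_2 = \O_{\M(\l)}$ gives $\End(\O_{\M(\l)}) = R_0 = \C$ (using that $\M(\l)$ is connected so $\pi_* \O = \O_{\M_0(\l)}$), recovering the asserted consequence. The main obstacle is the structural input of the first paragraph: verifying that the $\C^\times$-action is genuinely contracting, so that $R_0 = \C$ and all generators of $R$ have weights of a single sign; this is where the specific geometry of the Nakajima variety enters, and once it is established the rest of the argument is formal.
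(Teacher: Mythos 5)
Your proof is correct and takes essentially the same route as the paper: push forward along the proper map $\pi:\M(\l)\to\M_0(\l)$, use that the contracting $\C^\times$-action makes $R=\O(\M_0(\l))$ non-negatively graded with $R_0=\C$ and finite-dimensional graded pieces, and identify the equivariant $\Hom$ as the weight-zero piece of a finitely generated graded $R$-module. The paper phrases the middle step differently (an equivariant surjection $\O_{\M_0(\l)}^{\oplus n}\to\sA$ on the affine quotient rather than your graded-module lemma), but the content is the same; your version actually spells out the finite-dimensionality of every graded piece $R_n$, which the paper's terse argument implicitly relies on.
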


\begin{proof}
It suffices to show that $H^i(\M(\l), \sA)$ is finite dimensional for any $\sA \in D(\M(\l))$. Consider the proper map $\M(\l) \rightarrow \M_0(\l)$. Pushing forward we reduce to showing that $H^i(\M_0(\l), \sA)$ is finite dimensional for any $\sA \in D(\M_0(\l))$. 

The variety $\M_0(\l)$ is affine, so we can assume without loss of generality both that $\sA$ is a sheaf and that $i=0$.  Since $\M_0(\l)$ is affine, there exists a surjective map $\O_{\M_0(\l)}^{\oplus n} \rightarrow \sA$. So it suffices to show $H^0(\O_{\M_0(\l)})$ is finite dimensional. Now the $\C^\times$ action on $\M_0(\l)$ contracts everything to a point. Thus, $\C^\times$-equivariantly, $H^0(\O_{\M_0(\l)}) \cong \C$. The result follows since we always work $\C^\times$-equivariantly.
\end{proof}

Condition (\ref{co:sheaves}) is immediate.

\subsection{Adjunctions}
In order to check condition (\ref{co:adjoints}), we begin by describing the canonical bundle of $\B^{(r)}_i(\l)$.  We begin with the canonical bundle of $ \M(\l) $ itself.

\begin{Lemma} \label{th:canM}
The canonical bundle of $\M(\l)$ is $\omega_{\M(\l)} \cong \O_{\M(\l)} \{-2 \la \alpha_v, \Lambda_w \ra + \la \alpha_v, \alpha_v \ra \}$.
\end{Lemma}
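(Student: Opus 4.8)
The plan is to exploit the fact that $\M(\l)$ is a holomorphic symplectic reduction: as such it is smooth, and its underlying (non-equivariant) canonical bundle is \emph{forced} to be trivial because a symplectic form trivializes $\omega_{\M(\l)}$. Thus the entire content of the statement is the determination of the $\C^\times$-weight $k$ in $\omega_{\M(\l)} \cong \O_{\M(\l)}\{k\}$. I would obtain this by computing $\det(T\M(\l))$ $\C^\times$-equivariantly through the tangent complex of the reduction and then dualizing, since $\omega_{\M(\l)} = \det(T\M(\l))^\vee$.

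First I would record the tangent complex. On the stable locus the $P$-action is free and $0$ is a regular value of $\mu$ (both from \cite{Nak98}, where $\mu^{-1}(0)^s \to \M(\l)$ is a principal $P$-bundle), so $T\M(\l)$ is the middle cohomology of the three-term complex of vector bundles
$$\bigoplus_i \End(V_i) \xrightarrow{a} \sM \xrightarrow{d\mu} \bigoplus_i \End(V_i),$$
where $a$ is the (injective) infinitesimal gauge action, $d\mu$ is the (surjective) differential of the moment map, and $\sM := \mu^{-1}(0)^s \times_P \mathrm{M}(\l)$ is the bundle associated to the representation $\mathrm{M}(\l) = \mathrm{E}(V,V) \oplus \mathrm{L}(W,V) \oplus \mathrm{L}(V,W)$, built from the tautological bundles $V_i$ and the trivial bundles $W_i$. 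In $\C^\times$-equivariant $K$-theory this yields $[T\M(\l)] = [\sM] - [\bigoplus_i \End(V_i)] - [\bigoplus_i \End(V_i)]$, so it remains to take determinants of the three terms, tracking their equivariant structures.

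Next I would pin down the weights. By the equivariant structure fixed in Section \ref{C^*action} (each $B_h$ is a map $V_{out(h)} \to V_{in(h)}\{1\}$, and similarly for the framing maps $B_{p(i)}, B_{q(i)}$), every summand of $\sM$ carries a single $\{1\}$ twist; the source copy of $\bigoplus_i \End(V_i) = \Lie(P)$ carries weight $\{0\}$; and because $\mu$ is quadratic in the $B$'s its target copy carries weight $\{2\}$. The key observation is that every \emph{underlying} line bundle is trivial: each $\det(V_i \otimes V_i^\ast)$ is canonically $\O$, the arrow part $\mathrm{E}(V,V)$ contributes in cancelling oriented pairs $h, \bar h$, and the framing part $\mathrm{L}(W,V) \oplus \mathrm{L}(V,W)$ contributes $\det(V_i)^{-w_i}\det(V_i)^{w_i} = \O$ (using that $\det W_i$ is trivial). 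This is exactly the symplectic triviality, so only the $\C^\times$-weights survive.

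Finally I would collect the weights. The weight of $\det\sM$ equals $\mathrm{rank}(\sM) = \sum_{h\in H} v_{out(h)}v_{in(h)} + 2\sum_i v_i w_i$, the source term contributes $0$, and the target term contributes $2\sum_i v_i^2$, giving
$$\det(T\M(\l)) \cong \O_{\M(\l)}\Bigl\{\textstyle\sum_{h\in H} v_{out(h)}v_{in(h)} + 2\sum_i v_i w_i - 2\sum_i v_i^2\Bigr\}.$$
I would then translate using $\sum_{h\in H} v_{out(h)}v_{in(h)} = 2\sum_i v_i^2 - \la \alpha_v, \alpha_v \ra$ and $\sum_i v_i w_i = \la \alpha_v, \Lambda_w \ra$ (from $\la \alpha_i, \Lambda_j \ra = \delta_{ij}$), collapsing the weight to $2\la \alpha_v, \Lambda_w \ra - \la \alpha_v, \alpha_v \ra$; reassuringly this equals $\dim \M(\l)$, as it must since each factor of $\sM$ contributes its rank once. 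Dualizing then gives $\omega_{\M(\l)} \cong \O_{\M(\l)}\{-2\la \alpha_v, \Lambda_w \ra + \la \alpha_v, \alpha_v \ra\}$, as claimed. The only genuine obstacle is the consistent bookkeeping of the $\C^\times$-weights — in particular the $\{2\}$ on the moment-map target and the sign from dualizing — since the geometry contributes nothing beyond triviality of the underlying bundle.
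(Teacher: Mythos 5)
Your proposal is correct, but it takes a genuinely different route from the paper's, which is a two-line argument: the top wedge power of the holomorphic symplectic form is a non-vanishing section of $\omega_{\M(\l)}$, and since the form has $\C^\times$-weight $2$, this section has weight $2 \cdot \frac{1}{2}\dim \M(\l) = \dim\M(\l)$; the claim then follows from the dimension formula $\dim\M(\l) = 2\la\alpha_v,\Lambda_w\ra - \la\alpha_v,\alpha_v\ra$ of Section \ref{se:quiverdef}. In other words, the paper extracts the weight directly from the symplectic form itself, whereas you extract it from an equivariant K-theory computation with the tangent complex of the reduction, using the freeness of the $P$-action on $\mu^{-1}(0)^s$ and the surjectivity of $d\mu$ there (both from \cite{Nak98}), and you re-derive the triviality of the underlying line bundle by explicit cancellation of the $\det(V_i)$'s instead of quoting the symplectic structure. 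Your bookkeeping is right: the twists $\{1\}$ on the tangent directions and $\{2\}$ on the moment-map target are the ones consistent with the paper's convention that $B_h$ is an equivariant map $V_{out(h)} \to V_{in(h)}\{1\}$; the identities $\sum_{h\in H}v_{out(h)}v_{in(h)} = 2\sum_i v_i^2 - \la\alpha_v,\alpha_v\ra$ and $\sum_i v_iw_i = \la\alpha_v,\Lambda_w\ra$ are correct; and dualization flips the sign, so you land on the stated answer. What your approach buys: it is self-contained (it re-proves both the triviality and, in effect, the dimension formula via the rank count), and it is precisely the style of argument the paper itself uses for the harder Lemma \ref{th:canbundle}, where no symplectic shortcut is available. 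What the paper's approach buys: brevity, and insensitivity to the $\{\cdot\}$ sign conventions, which your argument must handle correctly in three separate places (middle term, moment-map target, final dualization) while the paper needs only the single fact that the symplectic form has weight $2$.
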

\begin{proof}
Since $ \M(\l) $ is symplectic, its canonical bundle has a non-vanishing section $s$, given by the top wedge power of the symplectic form.  The symplectic form has weight $2$ for the $\C^\times$ action, so this section $s$ has weight $ 2 (\frac{1}{2} \dim \M(\l)) = \dim \M(\l) $.  From section \ref{se:quiverdef}, we know that $ \dim \M(\l) = 2 \la \alpha_v, \Lambda_w \ra - \la \alpha_v, \alpha_v \ra$.
\end{proof}

\begin{Lemma} \label{th:canbundle}
The canonical bundle $ \omega_{\B^{(r)}_i(\l)} $ is given by
\begin{equation*}
\det(V_i/V'_i)^{\la \l, \alpha_i \ra} \det(V_i)^{2r}  \bigotimes_{in(h) = i} \det(V_{out(h)})^{-r} \{-r \la \l, \alpha_i \ra - 2r^2 - 2 \la \Lambda_w, \alpha_{v'} \ra + \la \alpha_{v'}, \alpha_{v'} \ra\}.
\end{equation*}
\end{Lemma}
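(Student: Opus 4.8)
The plan is to compute $\omega_{\B}$ (writing $\B = \B^{(r)}_i(\l)$) as the inverse determinant of its tangent bundle, $\omega_\B \cong \det(T_\B)^\vee$, by resolving $T_\B$ through an explicit three-term deformation (tangent--obstruction) complex of the Hecke correspondence built from the tautological bundles $V_j, V'_j, W_j$ and the universal arrows $B_h, B_{p(i)}, B_{q(i)}$. Since $\B \subset \M(\l)\times\M(\l+r\alpha_i)$ is smooth one could instead invoke adjunction, $\omega_\B \cong (\omega_{\M(\l)}\boxtimes\omega_{\M(\l+r\alpha_i)})|_\B\otimes\det N_\B$, with the first factor supplied as a trivial line bundle with known $\C^\times$-weight by Lemma \ref{th:canM}; but because $\B$ is Lagrangian in the (weight-$2$) symplectic product, the symplectic form identifies $N_\B$ with $\Omega^1_\B$ up to a weight shift, so adjunction alone is circular and the genuine content is the independent determinant computation. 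I would nonetheless keep adjunction as an organizing and cross-checking device: the summand $-2\la \Lambda_w,\alpha_{v'} \ra + \la \alpha_{v'},\alpha_{v'} \ra$ of the target weight is exactly $-\dim \M(\l+r\alpha_i)$, i.e.\ the $\C^\times$-weight of $\omega_{\M(\l+r\alpha_i)}$ from Lemma \ref{th:canM}, which is precisely what adjunction predicts.

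Concretely, I would first write down the deformation complex whose terms are the $\Hom$-bundles $\mathrm{L}(V,V')$, then $\mathrm{E}(V,V')\oplus\mathrm{L}(W,V')\oplus\mathrm{L}(V,W)$, then $\mathrm{L}(V,V')$ (all built from the tautological bundles over $\B$), verify using stability that it computes $T_\B$ up to pieces of trivial determinant, and then compute $\det T_\B$ as the alternating tensor product of the determinants of its terms. The only input needed for the determinants is the identity $\det \Hom(\mathcal A,\mathcal B) \cong \det(\mathcal A)^{-\mathrm{rk}(\mathcal B)}\otimes\det(\mathcal B)^{\mathrm{rk}(\mathcal A)}$. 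Using that $V_j \cong V'_j$ on $\B$ for $j\ne i$, and that $\det V'_i \cong \det V_i\otimes\det(V_i/V'_i)^{-1}$, I expect the contributions away from the $i$-th vertex to telescope, leaving the factors $\det(V_{out(h)})^{-r}$ over the neighbours $in(h)=i$, a factor $\det(V_i)^{2r}$ coming from the power $2(v_i - v'_i)=2r$ of $\det V_i$, and a factor $\det(V_i/V'_i)$ whose exponent collapses to $\la \l,\alpha_i \ra$ after substituting $\la \l,\alpha_i \ra = w_i - 2v_i + \sum_{j\sim i}v_j$.

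For the $\C^\times$-weight I would track that every arrow $B_h$ (including $B_{p(i)}, B_{q(i)}$) scales with weight $1$ as in Section \ref{C^*action}, which fixes the equivariant structure on each $\Hom$-bundle in the complex; summing the weighted ranks, together with the weight-$2$ normalization of the symplectic form, should produce the shift $\{-r\la \l,\alpha_i \ra - 2r^2 - 2\la \Lambda_w,\alpha_{v'} \ra + \la \alpha_{v'},\alpha_{v'} \ra\}$, with the last two terms cross-checked against $\omega_{\M(\l+r\alpha_i)}$ as above and the remaining $-r\la \l,\alpha_i \ra - 2r^2$ matching the weight of $\det N_\B$.

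The main obstacle is pinning down the correct deformation complex and proving its outer cohomology vanishes, so that its determinant really equals $\det T_\B$. The naive symmetric complex above has middle cohomology of dimension $\dim \B + r^2$ rather than $\dim \B$ (reflecting nonzero $\Hom$/$\Ext$ between the sub- and quotient representations concentrated at vertex $i$, with $r^2 = \dim \End(V_i/V'_i)$), so the vanishing must be handled carefully --- for instance by reducing to the single-vertex $\sl_2$ situation analysed in \cite{ckl1} and checking that the discrepancy lives in $\End(V_i/V'_i)$-type pieces of trivial determinant, which affect only the $\C^\times$-weight. Getting this bookkeeping exactly right, rather than the formal determinant manipulation, is where the real work lies.
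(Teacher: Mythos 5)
Your high-level strategy --- express $\omega_{\B^{(r)}_i(\l)}$ as an explicit product of determinants of tautological bundles, tracking the $\C^\times$-weight through the weight-$1$ scaling of the arrows --- is the right kind of computation, and your observation that bare adjunction is circular because $\B^{(r)}_i(\l)$ is Lagrangian is correct. But there is a genuine gap, and it sits exactly where you flag it. The three-term complex you propose, with outer terms $\mathrm{L}(V,V')$ and middle term $\mathrm{E}(V,V')\oplus\mathrm{L}(W,V')\oplus\mathrm{L}(V,W)$, does not resolve $T_{\B^{(r)}_i(\l)}$: its middle cohomology is a bundle of rank $\dim \B^{(r)}_i(\l) + r^2$, so the ``alternating product of determinants'' computes the determinant of the wrong bundle, off by the determinant of an excess piece of rank $r^2$. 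Identifying that excess as an ``$\End(V_i/V'_i)$-type piece of trivial determinant'' is not a harmless normalization: such pieces have trivial determinant only non-equivariantly, and each contributes a shift of the form $\{-(\mathrm{weight})\cdot r^2\}$ equivariantly. Since the equivariant shift $\{-r\la\l,\alpha_i\ra - 2r^2 - 2\la\Lambda_w,\alpha_{v'}\ra + \la\alpha_{v'},\alpha_{v'}\ra\}$ is part of the assertion (note the $-2r^2$ term, which is exactly of this excess type) and is what feeds into the adjunction Lemma \ref{lem:EFadj} and hence condition (\ref{co:adjoints}), the step you defer as ``bookkeeping'' is the entire content of the lemma, and your proposal supplies no mechanism for it; appealing to the $\sl_2$ case of \cite{ckl1} does not help, because the failure (no clean single complex for $r\ge 2$) is already present there. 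As a smaller point, your cross-check is also off: adjunction forces $\det N_{\B}$ to have weight $-r\la\l,\alpha_i\ra - 2r^2 + \dim\M(\l)$, not $-r\la\l,\alpha_i\ra-2r^2$, since the factor $\omega_{\M(\l)}$ must be absorbed as well.

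The idea you are missing is how the paper circumvents precisely this obstruction. The paper notes that Nakajima's single-bundle description of the Hecke correspondence as a regular zero locus exists only for $r=1$, and for general $r$ it instead factors the embedding $\B^{(r)}_i(\l)\subset\M(\l)\times\M(\l+r\alpha_i)$ through two intermediate subvarieties $A_1 \supset A_2 \supset \B^{(r)}_i(\l)$: $A_1$ (all data away from vertex $i$ agree) is the regular zero locus of a bundle $T_1$ built from Nakajima's diagonal complex with the $i$-th vertex omitted; $A_2\subset A_1$ (the locus where $V'_i\subset V_i$) is the regular zero locus of a cokernel bundle $T_2$; and $\B^{(r)}_i(\l)\subset A_2$ is the regular zero locus of a kernel bundle $T_3$ enforcing compatibility of the maps at vertex $i$. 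Then
\begin{equation*}
\omega_{\B^{(r)}_i(\l)} = \det(T_1)\det(T_2)\det(T_3)\,\omega_{\M(\l)\times\M(\l+r\alpha_i)}\big|_{\B^{(r)}_i(\l)},
\end{equation*}
and each factor is an explicit equivariant line bundle in the tautological bundles. Note that this is exactly adjunction --- the move you set aside as circular --- made non-circular by computing $\det N_{\B}$ through an explicit three-step filtration rather than through the symplectic form. Your single global complex concentrates all the difficulty (regularity, the $r^2$ excess, the equivariant weights) into one step that the proposal cannot complete, whereas the paper's factorization distributes it into three steps, each a standard regular-section computation.
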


\begin{proof}
Nakajima \cite[section 5]{Nak98} shows that $\B^{(1)}_i(\l) $ is a regular section of a vector bundle $ T $ on $ \M(\l) \times \M(\l + \alpha_i)$.  It is not  clear how to produce such a vector bundle for $ \B^{(r)}_i(\l) $.  So instead we will introduce two intermediate subvarieties $ A_1, A_2 $ and three vector bundles $ T_1, T_2, T_3 $ with sections $ s_1, s_2, s_3 $.  We will define these objects such that they satisfy the following properties.  $ T_1 $ is a vector bundle on $  \M(\l) \times \M(\l + r\alpha_i) $ and the zero set of $ s_1 $ is $ A_1$.  $ T_2 $ is a vector bundle on $A_1 $ and the zero set of $ s_2 $ is $ A_2$, $T_3 $ is a vector bundle on $ A_2 $, and the zero set of $ s_3 $ is equal to $ \B^{(r)}_i(\l) $.  Under these conditions, it is immediate that the canonical bundle of $ \B^{(r)}_i(\l) $ is given by 
\begin{equation} \label{eq:T1T2T3}
\omega_{\B^{(r)}_i(\l)} = \det(T_1) \det(T_2) \det(T_3) \omega_{\M(\l) \times \M(\l+ r\alpha_i)}
\end{equation}
as line bundles on $ \B^{(r)}_i(\l) $.

We define the first subvariety $A_1$ by the condition that all the maps in $\M(\l)$ and $\M(\l+r\alpha_i)$ not incident with vertex $i \in I$ are equal. The second subvariety $ A_2 $ is the locus where the extra condition that $V'_i \subset V_i$ holds, viewed inside the direct sum of all the neighbouring vertices.

To carve out the first subvariety we consider the sequence of vector bundles
$$L_i(V',V) \{-1\} \xrightarrow{\sigma} \bigoplus_{out(h) \ne i, in(h) \ne i} \Hom(V'_{out(h)}, V_{in(h)}) \bigoplus_{j \ne i} \Hom(W'_j, V_j) \bigoplus_{j \ne i} \Hom(V_j, W'_j) \xrightarrow{\tau} L_i(V',V)\{1\}$$
This is similar to Nakajima's sequence (equation (3.1.1) of \cite{Nak98}) used to carve out the diagonal, except all terms involving the $i$th vertex have been ommitted. The maps are the same as those used by Nakajima.  As in Nakajima's work it is easy to see that $\sigma$ is injective and that $\tau$ is surjective. We let $T_1 := \ker(\sigma)/\im(\tau)$ and define a section $ s_1 $ by $ [(C_h)] $ where $ C_h = 0 $ if $ h \in H $ and $ C_{q(j)}= B_{q(j)} $, $C_{p(j)} = B'_{p(j)}$.  The zero locus of this section is our first subvariety $ A_1 $, i.e. the locus where $ V_j = V'_j $ for $j \ne i $.

On the subvariety $A_1$,  we have the inclusion of vector bundles 
$$\Hom(V'_i, V_i) \rightarrow \bigoplus_{out(h) = i} \Hom(V'_i, V_{in(h)})\{1\} \bigoplus \Hom(V_i', W_i) \{1\}$$ 
coming from viewing $V_i$ as a sub-bundle of $\oplus_h V_{in(h)} \oplus W_i$ using the maps $B_h$ and $B_{p(i)}$. We let $T_2$ be the cokernel of this inclusion of vector bundles. The bundle $T_2$ has a section, defined by $ [(B'_h)]$. The zero set of the section is the locus where $V_i' \subset V_i$.
 
Finally, on this second subvariety we have the complex of vector bundles
$$\bigoplus_{in(h) = i}  \Hom(V_{out(h)}, V_i) \oplus \Hom(W_i, V_i) \rightarrow \Hom(V_i', V_i) \{1\} \oplus \Hom(V_i, V_i/V'_i)\{1\} \rightarrow \Hom(V'_i, V_i/V'_i)\{1\} $$
which is exact in the second and third positions.  Let $T_3$ be the kernel of the first map in this complex.  We define a section of $T_3 $ as $(B_h - B'_h)$.  This section vanishes precisely along $\B_i^{(r)}(\l)$.

So now we are in a position to apply (\ref{eq:T1T2T3}).  First note that by Lemma \ref{th:canM}, 
\begin{equation*}
\omega_{\M(\l) \times \M(\l + r\alpha_i)} \cong \O_{\M(\l) \times \M(\l+r\alpha_i)} \{-2 \la \alpha_v, \Lambda_w \ra + \la \alpha_v, \alpha_v \ra - 2 \la \alpha_{v'}, \Lambda_w \ra + \la \alpha_{v'}, \alpha_{v'} \ra\}.
\end{equation*} 

Ignoring the equivariant structure for the moment, we find that $ \det(T_1) $ is trivial, while 
\begin{equation*}
 \det(T_2) =  \det(V_i')^{-N_i + v_i} \prod_{out(h) = i} \det(V_{in(h)})^{v'_i} \det(V_i)^{-v'_i} 
 \end{equation*}
 and 
 \begin{equation*}
 \det(T_3) = \det(V_i)^{N_i - v'_i} \prod_{in(h) = i} \det(V_{out(h)})^{-v_i} \det(V'_i)^{v_i}
 \end{equation*}
Now we combine everything together using (\ref{eq:T1T2T3}).  Using that $ v'_i = v_i -r $, we deduce that ignoring $ \C^\times $ structure,
 \begin{equation*}
 \omega_{\B_i^{(r)}(\l)} = \det(V_i/V'_i)^{\la \l, \alpha_i \ra} \det(V_i)^{2r}  \bigotimes_{in(h) = i} \det(V_{out(h)})^{-r}.
 \end{equation*}
 
We still need to take into account the equivariant structure.  Examining our vector bundles, we see that $ \det(T_1) $ contibutes 
\begin{equation*}
\sum_{in(h) \ne i, out(h) \ne i} v_{in(h)} v_{out(h)} + \sum_{j \ne i} w_j v_j + \sum_{j \ne i} w_j v'_j - \sum_{j \ne i} 2 v_j^2
\end{equation*}
whereas $ \det(T_2) $ contributes
\begin{equation*}
\sum_{out(h) = i} v'_i v_{in(h)} + v'_i w_i 
\end{equation*}
and $ \det(T_3) $ contributes
\begin{equation*}
\sum_{in(h) = i} v_i v_{out(h)} + w_i v_i - 2v_i v'_i - 2 (v_i - v'_i) v_i + 2 (v_i - v'_i) v'_i.
\end{equation*}

Combining all this according to (\ref{eq:T1T2T3}), and keeping in mind that $ r= v_i - v'_i$, we deduce that the equivariant shift on $\omega_{\B_i^{(r)}(\l)} $ is
\begin{eqnarray*}
& & \{2 \la \Lambda_w, \alpha_v \ra - \la \alpha_v, \alpha_v \ra + r\la \alpha_v - \Lambda_w, \alpha_i \ra - 2r^2 - 2 \la \Lambda_w, \alpha_v \ra + \la \alpha_v, \alpha_v \ra - 2 \la \Lambda_w, \alpha_{v'} \ra + \la \alpha_{v'}, \alpha_{v'} \ra \} \\
&=& \{-r \la \l, \alpha_i \ra - 2r^2 - 2 \la \Lambda_w, \alpha_{v'} \ra + \la \alpha_{v'}, \alpha_{v'} \ra \}.
\end{eqnarray*}
\end{proof}

Now, we are in a position to check condition (\ref{co:adjoints}).
\begin{Lemma}\label{lem:EFadj} The left and right adjoints of the $\sE$s and $\sF$s are related by 
\begin{enumerate}
\item $\sE^{(r)}_i(\l)_R = \sF^{(r)}_i(\l) [r(\la \l, \alpha_i \ra + r)]\{-r(\la \l, \alpha_i \ra + r)\}$
\item $\sE^{(r)}_i(\l)_L = \sF^{(r)}_i(\l) [-r(\la \l, \alpha_i \ra + r)]\{r(\la \l, \alpha_i \ra + r)\}$.
\end{enumerate}
\end{Lemma}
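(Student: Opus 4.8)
The plan is to apply the general formula for adjoint kernels recorded in Section~\ref{sec:notation} together with Grothendieck--Serre duality for the structure sheaf of the smooth subvariety $\B^{(r)}_i(\l)$, and then to simplify the resulting line bundles using Lemmas~\ref{th:canM} and~\ref{th:canbundle}. Write $\iota: \B^{(r)}_i(\l) \hookrightarrow M$ for the inclusion into $M := \M(\l) \times \M(\l + r\alpha_i)$, and write $\sE^{(r)}_i(\l) = \iota_* L$ where $L$ is the (equivariant) line bundle appearing in its definition. Since $\B^{(r)}_i(\l)$ is smooth and half-dimensional in the smooth variety $M$, Grothendieck--Serre duality gives
\begin{equation*}
(\sE^{(r)}_i(\l))^\vee \cong \iota_*\big( L^{-1} \otimes \omega_{\B^{(r)}_i(\l)} \otimes \iota^* \omega_M^{-1}\big)[-c],
\end{equation*}
where $c = \codim_M \B^{(r)}_i(\l) = \dim \B^{(r)}_i(\l) = \tfrac12(\dim \M(\l) + \dim \M(\l + r\alpha_i))$.

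First I would assemble the right adjoint. By the formula $\sP_R = \sP^\vee \otimes p_2^* \omega_X[\dim X]$ with $X = \M(\l)$, and using $\omega_M \cong \omega_{\M(\l)} \boxtimes \omega_{\M(\l + r\alpha_i)}$, the pulled-back factor $\omega_{\M(\l)}$ cancels one half of $\iota^*\omega_M^{-1}$, leaving
\begin{equation*}
\sE^{(r)}_i(\l)_R \cong \iota_*\big( L^{-1} \otimes \omega_{\B^{(r)}_i(\l)} \otimes \iota^*\omega_{\M(\l + r\alpha_i)}^{-1}\big)[\dim \M(\l) - c].
\end{equation*}
The cohomological shift is $\dim \M(\l) - c = \tfrac12(\dim \M(\l) - \dim \M(\l + r\alpha_i))$, and substituting the dimension formula from Section~\ref{se:quiverdef} (with $\alpha_{v'} = \alpha_v - r\alpha_i$ and $\la \alpha_i, \alpha_i \ra = 2$) one computes this to equal exactly $r(\la \l, \alpha_i \ra + r)$, matching the shift asserted in (1). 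This is the cleanest consistency check on the whole computation.

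It then remains to identify $L^{-1} \otimes \omega_{\B^{(r)}_i(\l)} \otimes \iota^*\omega_{\M(\l+r\alpha_i)}^{-1}$, together with its $\C^\times$-weight, with the line bundle of $\sF^{(r)}_i(\l)$ shifted by $\{-r(\la \l, \alpha_i\ra + r)\}$. I would substitute the explicit expression for $\omega_{\B^{(r)}_i(\l)}$ from Lemma~\ref{th:canbundle}, the (purely equivariant) canonical bundle $\omega_{\M(\l+r\alpha_i)}$ from Lemma~\ref{th:canM}, and the explicit $L$ from Section~\ref{sec:action}. Ignoring the $\C^\times$-structure, the $\det(V_{out(h)})^{\pm r}$ contributions cancel and the surviving $\det(V_i)$, $\det(V'_i)$ powers recombine into $\det(V_i/V'_i)^{\la \l, \alpha_i\ra + r}$; after accounting for the factor-switching convention (Section~\ref{se:catgaction}) that exchanges the labels $V_i$ and $V'_i$ when the two factors of the product are interchanged, this is precisely the bundle $\det(V'_i/V_i)^{\la \l, \alpha_i\ra + r}$ appearing in $\sF^{(r)}_i(\l)$. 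Tracking the $\C^\times$-weights from the three lemmas then yields a residual equivariant shift which reorganizes as $\{r(v_i-r)\}\{-r(\la \l, \alpha_i\ra + r)\}$, i.e. the shift $\{r(v_i-r)\}$ of $\sF^{(r)}_i(\l)$ together with the claimed extra $\{-r(\la \l, \alpha_i\ra + r)\}$. This proves (1).

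Finally, (2) follows by the identical argument with $\sP_L = \sP^\vee \otimes p_1^*\omega_Y[\dim Y]$ in place of $\sP_R$: one simply replaces $\omega_{\M(\l)}$ and $\dim \M(\l)$ by $\omega_{\M(\l+r\alpha_i)}$ and $\dim \M(\l+r\alpha_i)$, which reverses the signs of both the cohomological and the equivariant shifts while leaving the underlying line bundle unchanged. The main obstacle is purely organizational, namely the careful bookkeeping of the determinant line bundles and the $\C^\times$-weights coming out of Lemma~\ref{th:canbundle}, where an arithmetic slip is easy to make; the cohomological shift computed above serves as a useful independent check that the weight accounting is internally consistent.
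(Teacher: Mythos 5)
Your proposal is correct and follows essentially the same route as the paper's own proof: apply the adjoint-kernel formula from Section~\ref{sec:notation}, compute $(\sE^{(r)}_i(\l))^\vee$ via Grothendieck--Serre duality for the half-dimensional smooth subvariety $\B^{(r)}_i(\l)$, substitute Lemmas~\ref{th:canM} and~\ref{th:canbundle}, and extract the shift $r(\la \l, \alpha_i \ra + r)$ from the dimension formula of Section~\ref{se:quiverdef}. The paper simply carries out the determinant and $\C^\times$-weight bookkeeping explicitly inline, arriving at the same reorganization $\{-r\la \l,\alpha_i\ra - 2r^2 + rv_i\} = \{r(v_i-r)\}\{-r(\la \l,\alpha_i\ra+r)\}$ that you describe.
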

\begin{proof}

We give the proof for (i), as (ii) is similar. We have:
\begin{equation*}
\begin{aligned}
\sE_i^{(r)}(\l)_R &= {\sE_i^{(r)}}^\vee \otimes \omega_{\M(\lambda)} [\dim \M(\l)] \\
&= \omega_{\B^{(r)}_i(\l)} \omega_{\M(\lambda) \times \M(\lambda + r \alpha_i)}^\vee [-\codim \B^{(r)}_i(\l)] \\
&\quad \det(V_i)^{-r} \det(V'_i)^{-r} \bigotimes_{in(h) = i} \det(V_{out(h)})^{r} \{rv_i\} \otimes \omega_{\M(\l)} [\dim \M(\l)] \\
&= \O_{\B^{(r)}_i(\l)} \otimes \det(V'_i/V_i)^{\langle \l,\alpha_i \rangle + r}\{-r\langle \l, \alpha_i \rangle -2r^2 +rv_i\}[\dim \B^{(r)}_i(\l) - \dim \M(\lambda + r \alpha_i)] \\
&= \sF_i^{(r)}(\l)[r(\langle \l, \alpha_i \rangle + r)]\{-r(\langle \l, \alpha_i \rangle + r)\}
\end{aligned}
\end{equation*}
where in the second last step, we use Lemmas \ref{th:canM} and \ref{th:canbundle}.  To compute the homological shift in the last step we used that
\begin{equation*}
\begin{aligned}
\dim \B^{(r)}_i(\l) - \dim \M(\l + r\alpha_i) &= \frac{1}{2}\left(\dim \M(\l) - \dim \M(\l + r\alpha_i) \right) \\
&= \frac{1}{2} \left( 2 \la \alpha_v, \Lambda_w \ra - \la \alpha_v, \alpha_v \ra - 2 \la \alpha_v - r \alpha_i, \Lambda_w \ra + \la \alpha_v - r \alpha_i, \alpha_v - r \alpha_i \ra \right)\\
&= r \la \alpha_i, \Lambda_w \ra - r \la \alpha_i, \alpha_v \ra + r^2 = r(\la \l, \alpha_i \ra + r).
\end{aligned}
\end{equation*}
\end{proof}

\section{The $\sl_2$ relations}\label{sec:sl2rels}

In this section, we will check the conditions (\ref{co:EE}) - (\ref{co:contain}) of a geometric categorical $\g $ action. We call these the $\sl_2 $ relations, because these conditions complete the check that, for each $i$, our varieties and functors define geometric categorical $\sl_2$ action.

The proof that we give for the $ \sl_2 $ relations will be based on the corresponding result for quiver varieties in the special case $ \g = \sl_2 $.  When $ \g = \sl_2 $, these quiver varieties are cotangent bundles to Grassmannians. In \cite{ckl1} and \cite{ckl2}, we established a geometric categorical $\sl_2$ action on cotangent bundles to Grassmmanians. We will reduce from arbitrary quiver varieties to $\sl_2$ quiver varieties using Nakajima's ``modifications of quiver varieties'' (\cite[section 11]{Nak00}).

\subsection{Modifications of quiver varieties}\label{se:sl2mod}

Fix a quiver variety $\M(\l)$.  For a vertex $i$, let $ N_i := \sum_{in(h) = i} v_{out(h)} + w_i$ denote the sum of the dimensions of the neighbors of the vertex $i$. Notice that $\la \l, \alpha_i \ra = N_i - 2v_i $.  

Recall the moment map 
$$\mu: \mathrm{M}(\l) \rightarrow \g = \Lie(\prod_{k \in I} \GL(V_k)) = \oplus_{k \in I} \Hom(V_k,V_k).$$
Let $\mu_i$ be the projection of this moment map to $\Hom(V_i,V_i)$. Explicitly, we have  
$$\mu_i(B) = \sum_{in(h) = i} \epsilon(h) B_h B_{\overline{h}} + B_{q(i)} B_{p(i)}.$$
Let 
$$\tM_i(\l) = \{(B) \in \mu_i^{-1}(0) \mid B_{p(i)} \bigoplus_{out(h) = i} B_h \text{ is injective } \} / \GL(V_i).$$
The variety $\tM_i(\l)$ is naturally isomorphic to a product of an $\sl_2 $ quiver variety and an affine space. More precisely, fix an isomorphism $ \C^{N_i} \cong W_i \oplus \bigoplus_{out(h) = i} V_{in(h)}$. Then, given a point $ B \in \tM_i(\l)$, let $ B_{out(i)} = B_{p(i)} \bigoplus_{out(h) = i} B_h$. The image $ \im B_{out(i)}$ is a  $ v_i$-dimensional subspace of $ \C^{N_i} $. We also define $ B_{in(i)} = B_{q(i)} \bigoplus_{in(h) = i} B_h $, thus obtaining an endomorphism $ B_{out(i)} B_{in(i)} $ of $ \C^{N_i}$. Thus, to a point in $ \tM_i(\l) $, we have assigned a point $(\im B_{out(i)}, B_{out(i)} B_{in(i)}) $ in $T^\star \G(v_i,N_i)$, the cotangent bundle to the Grassmannian of $ v_i $ dimensional subspaces of $ \C^{N_i} $.  In addition, let 
$$\mathrm{M}'_i(\l) = \bigoplus_{in(h) \neq i, out(h) \neq i} \Hom(V_{out(h)},V_{in(h)}) \bigoplus_{j \ne i} \Hom(W_j,V_j) \bigoplus_{j \ne i} \Hom(V_j,W_j)$$
denote the affine space consisting of those linear maps not involving the vertex $i$.

The construction above gives us an isomorphism
\begin{equation} \label{eq:isotM}
\tM_i(\l) \cong T^\star \G(v_i,N_i) \times \mathrm{M}'_i(\l)
\end{equation}
This isomorphism is $\C^\times$-equivariant, where $ \C^\times $ acts with weight $2$ on the fibres of $T^\star \G(v_i,N_i)$ and with weight 1 on $\mathrm{M}'_i(\l)$.

In addition to $\tM_i(\l)$, we will also need to consider the variety
\begin{equation*}
\hM_i(\l) := \mu^{-1}(0)^s/\GL(V_i).
\end{equation*}

Note that $\hM_i(\l)$ is a locally closed subvariety of $\tM_i(\l)$, since we impose the closed condition $ \mu = 0 $ together with the open condition of stability. We will denote this locally closed embedding by $j_\lambda: \hM_i(\l) \hookrightarrow \tM_i(\l)$. Also, directly from the definitions, we see that $ \hM_i(\l) $ is a principal $P_i := \prod_{l \neq i } \GL(V_l)$-bundle over $\M(\l)$. The picture to keep in mind when considering all of these varieties is 
\begin{equation}\label{eq:helpful}
\xymatrix{
\M(\l) & \ar[l]_{\pi_i} \hM_i(\l) \ar[r] \ar[d]  & \tM^\circ_i(\l) \ar[r] \ar[d] & \tM_i(\l) \\
 & 0 \ar[r] &\oplus_{l \ne i} \mathfrak{gl}(V_i), & \\
}
\end{equation}
where $\tM_i^{\circ}(\l) := \mu_i^{-1}(0)^s/\GL(V_i)$ is the open subscheme of $\tM_i(\l)$ defined by the stability condition. 

The modified quiver varieties $\tM_i(\l)$ and $\hM_i(\l)$ have natural flat deformations
$$\mu_i: \tN_i(\l) \rightarrow \mathbb{A}^1 \text{ and } \mu_i: \hN_i(\l) \rightarrow \mathbb{A}^1$$
given by replacing $\mu_i^{-1}(0)$ by $\mu_i^{-1}(Z)$ in the definition, exactly as in section \ref{deformations}. As above, $\hN_i(\l)$ is a locally closed subvariety of $\tN_i(\l)$, and $\hN_i(\l)$ is a principal $P_i$ bundle over $\N_i(\l) := \N(\l)|_{\mathrm{span}(\Lambda_i)}$.

Now we will define analogous modifications of Hecke correspondences. Between the modified quiver varieties $\tM_i$ we define the modified Hecke correspondence
$$\tB^{(r)}_i(\l) := \B^{(r)}(\la \l, \alpha_i \ra) \times \Delta_{\mathrm{M}_i'} \subset \tM_i(\l) \times \tM_i(\l+r\alpha_i)$$
where 
$$\B^{(r)}(\la \l, \alpha_i \ra) \subset \M(\la \l, \alpha_i \ra) \times \M(\la \l+r\alpha_i, \alpha_i \ra) = T^\star \G(v_i, N_i) \times T^\star \G(v_i-r, N_i)$$ 
is the Hecke correspondence for the $\sl_2$ quiver varieties and $\Delta_{\mathrm{M}'_i} \subset \mathrm{M}'_i \times \mathrm{M}'_i$ is the diagonal.  

Next, between the modified quiver varieties $\hM_i$ we define 
$$\hB^{(r)}_i(\l) := \tB^{(r)}_i(\l) \cap (\hM_i(\l) \times \tM(\l+r\alpha_i)).$$
Since $\tB^{(r)}_i(\l) = \B^{(r)}(\la \l, \alpha_i \ra) \times \Delta_{\mathrm{M}_i'}$, once $\mu = 0$ and stability is imposed on the $\tM_i(\l)$ factor, they are automatically imposed on the $\tM(\l+r\alpha_i)$ factor.  Hence, 
$$\hB^{(r)}_i(\l) \subset \hM_i(\l) \times \hM_i(\l+r\alpha_i).$$

The following is immediate from the definitions.
\begin{Lemma}\label{lem:principleG_i} The map $ \hM_i(\l) \times \hM_i(\l+r\alpha_i) \rightarrow \M(\l) \times \M(\l+r\alpha_i) $ restricts to a principal $ P_i $ bundle $\hB^{(r)}_i(\l) \rightarrow \B^{(r)}_i(\l)$. 
\end{Lemma}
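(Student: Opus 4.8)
The plan is to realize the asserted bundle by restricting the product of the two principal $P_i$-bundles $\hM_i(\l) \to \M(\l)$ and $\hM_i(\l+r\alpha_i) \to \M(\l+r\alpha_i)$ recalled just above the lemma, and then to identify its structure group along $\hB^{(r)}_i(\l)$ with the diagonal copy of $P_i$ inside $P_i \times P_i$. First I would recall that a point of $\hM_i(\l)$ lying over $[B,V] \in \M(\l)$ is nothing but the datum of $(B,V)$ together with a trivialization (framing) of the tautological bundles $V_l$ at the vertices $l \neq i$, taken modulo the residual $\GL(V_i)$; this is precisely the content of $\hM_i(\l) = \mu^{-1}(0)^s/\GL(V_i)$ being a principal $P_i$-bundle over $\M(\l) = \mu^{-1}(0)^s/P$, where $P_i = \prod_{l \neq i} \GL(V_l)$. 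The product map $\hM_i(\l) \times \hM_i(\l+r\alpha_i) \to \M(\l) \times \M(\l+r\alpha_i)$ is then a principal $P_i \times P_i$-bundle, and the map in the lemma is simply its restriction over $\B^{(r)}_i(\l) \subset \M(\l) \times \M(\l+r\alpha_i)$. That this restriction lands in $\hM_i(\l) \times \hM_i(\l+r\alpha_i)$, rather than only in $\hM_i(\l) \times \tM_i(\l+r\alpha_i)$, is exactly the observation preceding the lemma that imposing $\mu=0$ and stability on the first factor forces them on the second.

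The key step is to see that along the Hecke correspondence the two framings are not independent. For a point $(B,V,S) \in \B^{(r)}_i(\l)$ the subspace $S$ satisfies $S_l = V_l$ for every $l \neq i$, since the dimension vector drops only at the vertex $i$; equivalently, this is the $\Delta_{\mathrm{M}'_i}$ condition built into $\tB^{(r)}_i(\l) = \B^{(r)}(\la \l, \alpha_i \ra) \times \Delta_{\mathrm{M}'_i}$. Consequently a framing of $V$ at the non-$i$ vertices is literally the same datum as a framing of $S$ at the non-$i$ vertices. Thus specifying a point of $\hB^{(r)}_i(\l)$ amounts to specifying a point $(B,V,S)$ of $\B^{(r)}_i(\l)$ together with a single such framing, and the $P_i \times P_i$-action is forced to act through its diagonal: acting by $(g,g')$ preserves the condition that the two framings agree precisely when $g = g'$.

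To finish I would verify the three defining properties of a principal bundle for this diagonal $P_i$-action on $\hB^{(r)}_i(\l)$ over $\B^{(r)}_i(\l)$. Surjectivity is clear, since any $(B,V,S)$ admits a framing of $V$ at the non-$i$ vertices, which then simultaneously frames $S$ and produces a point of $\hB^{(r)}_i(\l)$ above it. Freeness of the diagonal $P_i$ follows from freeness of $P_i$ on $\hM_i(\l)$, and local triviality is inherited from the ambient $P_i \times P_i$-bundle. The one point deserving care — which I regard as the main obstacle — is the verification that the structure group genuinely collapses to the diagonal rather than remaining $P_i \times P_i$ or some intermediate subgroup: concretely, that a pair of framings of $V$ and of $S$ agreeing at the non-$i$ vertices cannot be moved independently while staying in $\hB^{(r)}_i(\l)$. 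This is exactly the identification $S_l = V_l$ (the $\Delta_{\mathrm{M}'_i}$ condition), so once that identification is made explicit the remaining assertions are formal, which is why the statement is \emph{immediate from the definitions}.
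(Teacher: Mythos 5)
Your reduction of the lemma to the claim that a point of $\hB^{(r)}_i(\l)$ is the same thing as a point $(B,V,S)$ of $\B^{(r)}_i(\l)$ together with a single framing at the vertices $l \ne i$ (equivalently, that $\hB^{(r)}_i(\l)$ is the pullback of the principal $P_i$-bundle $\hM_i(\l) \rightarrow \M(\l)$ along $\pi_1$) is the right skeleton, and it is what the paper's ``immediate from the definitions'' is gesturing at. The gap is in your justification of that claim. By definition, $\hB^{(r)}_i(\l)$ is a locus of \emph{pairs} of framed representations, cut out inside $\hM_i(\l) \times \tM_i(\l+r\alpha_i)$ by two conditions: the $\mathrm{M}'_i$-components of the two points are equal, and their $T^\star \G$-components lie in the $\sl_2$ Hecke correspondence. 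Your argument for the collapse of the structure group to the diagonal uses only the first condition: you assert that acting by $(g,g')$ preserves ``agreement of the framings'' precisely when $g = g'$, and you say this ``is exactly the identification $S_l = V_l$''. But what the $\Delta_{\mathrm{M}'_i}$ condition actually gives is that $k := g^{-1}g'$ stabilizes the point $m \in \mathrm{M}'_i(\l)$, and this stabilizer need not be trivial. For instance, for $\Gamma$ of type $A_2$, $i=1$, $w = (w_1,0)$, one has $\mathrm{M}'_1(\l) = \Hom(V_2,W_2)\oplus\Hom(W_2,V_2) = 0$, so the diagonal condition is vacuous and constrains $(g,g')$ not at all, even though the relevant varieties are nonempty (take $w=(2,0)$, $v = (2,1)$, $r=1$). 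In that example the structure group does still collapse to the diagonal, but for a different reason: writing $\widehat{k} = \id_{W_1}\oplus k$ on $\C^{N_1} = W_1 \oplus V_2$ and $X = B_{out(1)}B_{in(1)}$, one computes $\widehat{k} X \widehat{k}^{-1} = k^{-1}X$ at the points of the correspondence, so the Hecke condition $\widehat{k} X \widehat{k}^{-1} = X$ forces $k=1$ because $X \ne 0$ --- and $X \ne 0$ there is itself a consequence of stability. So the collapse genuinely uses the vertex-$i$ part of the correspondence conditions together with stability; it is not formal once $S_l = V_l$ is noted.

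What a complete proof needs, and what your proposal leaves unproved, is the two-way identification itself: (a) given a pair $([B],[B'])$ in $\hB^{(r)}_i(\l)$, stability of $(B,V)$ forces $B_{out(i)}$ to be injective, so one can set $S_i := B_{out(i)}^{-1}(\im B'_{out(i)})$ and $S_l := V_l$ for $l \ne i$; (b) the $\sl_2$ Hecke conditions --- the two points share the same endomorphism $X$ of $\C^{N_i}$, and $X(\C^{N_i}) \subset \im B'_{out(i)} \subset \im B_{out(i)}$ --- show that this $S$ is $B$-invariant, contains $\im B_{q(i)}$, and satisfies $[B|_S] = [B']$, so that every point of $\hB^{(r)}_i(\l)$ really is of the form $([B],[B|_S])$; and (c) freeness of the $P$-action on $\mu^{-1}(0)^s$ then shows the fibre over $[(B,V,S)] \in \B^{(r)}_i(\l)$ is a single diagonal $P_i$-orbit, i.e.\ $\hB^{(r)}_i(\l) \cong \B^{(r)}_i(\l)\times_{\M(\l)}\hM_i(\l)$ as $P_i$-bundles. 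Your surjectivity, freeness, and local-triviality checks are fine once (a)--(c) are in place, but the step you flag as ``the main obstacle'' is resolved in your write-up by an observation that does not resolve it.
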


\subsection{Modifications of Hecke operators}
We will now define
$$\tsE^{(r)}_i(\l), \tsF^{(r)}_i(\l) \text{ and }  \hsE^{(r)}_i(\l), \hsF^{(r)}_i(\l)$$ 
using the appropriate line bundles on the Hecke correspondences $\tB^{(r)}_i(\l)$ and $\hB^{(r)}_i(\l)$. 

To begin, recall from \cite{ckl2}, that we defined Hecke operators $ \sE^{(r)}, \sF^{(r)} $ for $ T^\star \G(v_i, N_i) $ by
\begin{gather*}
\sE^{(r)} := \O_{\B^{(r)}(\langle \l, \alpha_i \rangle)}  \det(\C^{N_i}/V')^{-r} \det(V)^r \{r(v_i-r)\} \in D(T^\star \G(v_i, N_i) \otimes T^\star \G(v_i - r, N_i))\\
\sF^{(r)} := \O_{\B^{(r)}(\langle \l, \alpha_i \rangle)} \det(V'/V)^{N_i - 2v_i +r} \{r(N_i-v_i)\} \in D(T^\star \G(v_i -r, N_i) \otimes T^\star \G(v_i, N_i))
\end{gather*}
where $ V $ denotes the tautological vector bundle. 
\begin{Remark}
Actually, there is a small mistake in \cite{ckl2} at this point.  The kernels in \cite{ckl2} were obtained from kernels in \cite{ckl1}. However, under the isomorphism in Lemma 3.2 of \cite{ckl2}, we have $ L_2 \cong \C^N \{2\} $ and $ L_1 \cong V \{2\} $, and we overlooked these shifts when defining the kernels. So, actually the shift on $\sE^{(r)}$ in \cite{ckl2} should have been $\{ r(v_i-r) - 2r(N_i - 2v_i + r) \} $ and the shift on $ \sF^{(r)} $ should have been $ \{r(N_i - v_i) + 2r(N_i - 2v_i + r) \} $.  But actually, the ``incorrect'' shifts used in \cite{ckl2} work perfectly well, since the extra terms above are equal to $ r \la 2 \l + r \alpha, \alpha \ra$ and it is easy to see that these extra terms propogate harmlessly in all the Serre relations.  So there is no harm is using the shifts from \cite{ckl2}.
\end{Remark}

Under the isomorphism (\ref{eq:isotM}), the tautological vector $ V $ on $ T^\star \G(v_i, N_i) $ corresponds to $ \im B_{out(i)} $.  The map $ B_{out(i)} $ gives an isomorphism of vector bundles from $ V_i $ to $\im B_{out(i)}\{1\}$.   Hence under the isomorphism (\ref{eq:isotM}), $ V $ is isomorphic to $ V_i\{-1\} $.  Motivated by this, we define
\begin{gather*}
\tsE^{(r)}_i(\l) = \O_{\tB^{(r)}_i(\l)} \det (V_i')^r \det (V_i)^r \bigotimes_{in(h) = i } \det V_{out(h)}^{-r}\{-rv_i\} \\
\tsF^{(r)}_i(\l) = \O_{\tB^{(r)}_i(\l)} \det (V_i'/V_i)^{N_i - 2v_i + r} \{r(v_i-r)\}
\end{gather*}

Thus under the isomorphism (\ref{eq:isotM}), $\tsE^{(r)}_i(\l), \tsF^{(r)}_i(\l) $ correspond to $ \sE^{(r)} \boxtimes \O_\Delta $ and $ \sF^{(r)} \boxtimes \O_\Delta $. 

In \cite{ckl2}, we showed that the $ \sE^{(r)}, \sF^{(r)} $ define a geometric categorical $ \sl_2 $ action.  Hence we immediately deduce the following result.

\begin{Proposition} \label{prop:sl2tilde}
The varieties $\tM_i(\l)$, the deformations $\tN_i(\l)$, and the kernels 
$$\tsE^{(r)}_i(\l) \in D(\tM_i(\l) \times \tM_i(\l+r\alpha_i)) \text{ and } \tsF^{(r)}_i(\l) \in D(\tM_i(\l+r\alpha_i) \times \tM_i(\l))$$ 
define a geometric categorical $ \sl_2 $ action. In particular,
\begin{enumerate}
\item $\sH^*(\tsE_i * \tsE^{(r)}_i) \cong \tsE^{(r+1)}_i \otimes_\C H^\star(\p^r).$
\item If $\la \l, \alpha_i \ra \le 0$ then 
$$\tsF_i(\l) * \tsE_i(\l) \cong \tsE_i(\l-\alpha_i) * \tsF_i(\l-\alpha_i) \oplus \sP$$
where $\sH^*(\sP) \cong \O_\Delta \otimes_\C H^\star(\p^{- \la \l, \alpha_i \ra - 1})$. 

Similarly, if $\la \l, \alpha_i \ra \ge 0$ then 
$$\tsE_i(\l-\alpha_i) * \tsF_i(\l-\alpha_i) \cong \tsF_i(\l) * \tsE_i(\l) \oplus \sP'$$
where $\sH^*(\sP') \cong \O_\Delta \otimes_\C H^\star(\p^{\la \l, \alpha_i \ra - 1})$.
\item $\sH^*(i_{23*} \tsE_i * i_{12*} \tsE_i) \cong \tsE_i^{(2)}[-1] \oplus \tsE_i^{(2)}[2]$
where $i_{12}$ and $i_{23}$ are the closed immersions
\begin{align*} 
i_{12}: \tM_i(\l) \times \tM_i(\l+ \alpha_i) &\rightarrow \tM_i(\l) \times \tN_i(\l+\alpha_i) \\
i_{23}: \tM_i(\l + \alpha_i) \times \tM_i(\l + 2\alpha_i) &\rightarrow \tN_i(\l+\alpha_i) \times \tM_i(\l+2\alpha_i).
\end{align*}
\end{enumerate}
\end{Proposition}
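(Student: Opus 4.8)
The plan is to deduce everything directly from the product decomposition (\ref{eq:isotM}) together with the geometric categorical $\sl_2$ action on cotangent bundles of Grassmannians established in \cite{ckl2}. The one structural input, recorded just before the statement, is that under $\tM_i(\l) \cong T^\star \G(v_i,N_i) \times \mathrm{M}'_i(\l)$ the kernels $\tsE^{(r)}_i(\l)$ and $\tsF^{(r)}_i(\l)$ are the external products $\sE^{(r)} \boxtimes \O_\Delta$ and $\sF^{(r)} \boxtimes \O_\Delta$, where $\O_\Delta$ is the structure sheaf of the diagonal of $\mathrm{M}'_i(\l) \times \mathrm{M}'_i(\l)$ and $\mathrm{M}'_i(\l)$ is literally the same affine factor at every weight along the $i$-string.

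First I would establish the elementary compatibility of convolution with this external product: for kernels $\sA, \sA'$ on the Grassmannian factors one has
$$(\sA \boxtimes \O_\Delta) * (\sA' \boxtimes \O_\Delta) \cong (\sA * \sA') \boxtimes \O_\Delta,$$
and likewise that cohomology sheaves $\sH^*$, homological and equivariant shifts, and direct sums all commute with $(-) \boxtimes \O_\Delta$. These are immediate from base change and the projection formula on the product, using that $\O_\Delta * \O_\Delta \cong \O_\Delta$ on the $\mathrm{M}'_i$ factor and that the diagonal of $\tM_i$ is the external product of the diagonals of $T^\star\G$ and $\mathrm{M}'_i$. With this in hand, relations (i) and (ii) follow by applying $(-) \boxtimes \O_\Delta$ to conditions (\ref{co:EE}) and (\ref{co:EF}) for the $\sl_2$ kernels $\sE^{(r)}, \sF^{(r)}$ proved in \cite{ckl2}. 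The remaining (technical) conditions in the definition of a geometric categorical $\sl_2$ action --- finiteness of $\Hom$ spaces, the adjunction, and the support condition (\ref{co:contain}) --- transport through the product in the same way and reduce to their counterparts for $T^\star\G$.

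For relation (iii) I would first identify the deformation $\tN_i(\l) \to \mathbb{A}^1$ with the product of the one-parameter deformation of $T^\star \G(v_i, N_i)$ used in \cite{ckl2} and the constant factor $\mathrm{M}'_i(\l)$. This holds because $\mu_i$ only involves the maps incident to the vertex $i$, so varying the value of $\mu_i$ deforms solely the cotangent-bundle factor and leaves $\mathrm{M}'_i(\l)$ fixed. Consequently the closed immersions $i_{12}, i_{23}$ are the products of the corresponding immersions of \cite{ckl2} with the identity on $\mathrm{M}'_i(\l)$, so that $i_{23*}\tsE_i * i_{12*}\tsE_i \cong (i_{23*}\sE * i_{12*}\sE) \boxtimes \O_\Delta$; taking $\sH^*$ and invoking the corresponding relation for the $\sl_2$ action then gives the claim.

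Since the proposition is essentially transport of structure through a product, there is no genuine obstacle; the only point requiring care is the $\C^\times$-equivariant bookkeeping. Under (\ref{eq:isotM}) the torus acts with weight $2$ on the cotangent fibres but weight $1$ on $\mathrm{M}'_i(\l)$, and the tautological bundle $V$ corresponds to $V_i\{-1\}$. I would therefore double-check, using the shift conventions of \cite{ckl2} (including the corrections noted in the preceding Remark), that the grading shifts built into $\tsE^{(r)}_i, \tsF^{(r)}_i$ match those of $\sE^{(r)}, \sF^{(r)}$ exactly; once this identification holds, $(-)\boxtimes\O_\Delta$ introduces no extra shift and relations (i)--(iii) transport with precisely the shifts stated.
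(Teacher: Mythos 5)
Your proposal is correct and follows exactly the paper's route: the paper also identifies $\tsE^{(r)}_i, \tsF^{(r)}_i$ with $\sE^{(r)} \boxtimes \O_\Delta$, $\sF^{(r)} \boxtimes \O_\Delta$ under the product decomposition (\ref{eq:isotM}) and then deduces the proposition ``immediately'' from the geometric categorical $\sl_2$ action on cotangent bundles of Grassmannians established in \cite{ckl2}. The details you supply --- compatibility of convolution with $(-)\boxtimes\O_\Delta$, the deformation $\tN_i(\l)$ factoring as the $\sl_2$ deformation times the constant affine factor, and the $\C^\times$-equivariant bookkeeping via $V \cong V_i\{-1\}$ --- are precisely the routine verifications the paper leaves implicit.
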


We now define the second kind of Hecke modifications $ \hsE^{(r)}_i, \hsF^{(r)}_i $ as follows:
\begin{gather}
\hsE^{(r)}_i(\l) := \O_{\B^{(r)}_i(\l)} \otimes \det(V_i)^r \det(V'_i)^r \bigotimes_{in(h) = i} \det(V_{out(h)})^{-r} \{-rv_i\} \\
\hsF^{(r)}_i(\l) := \O_{\B^{(r)}_i(\l)} \otimes \det(V'_i/V_i)^{\langle \l,\alpha_i \rangle - r} \{rv_i - r\} 
\end{gather}

Since the varieties $ \hM_i$ and $\hN_i $ are principal $ P_i $-bundles over $ \M$ and $ \N $, respectively, there is an equivalence between the category of $ P_i $-equivariant coherent sheaves on $ \hM_i$, resp. $\hN_i $ and the category of coherent sheaves on $ \M$, resp. $\N$.  Moreover, $ \hsE_i, \hsF_i $ and $ \sE_i, \sF_i $ correspond under this equivalence.  Hence it suffices to prove the $ \sl_2 $ relations (\ref{co:EE}), (\ref{co:EF}), (\ref{co:EEdef}) for $ \hsE_i, \hsF_i $.

To prove these relations for $ \hsE_i, \hsF_i $ we will use Proposition \ref{prop:sl2tilde}, which establishes these relations for $ \tsE_i, \tsF_i $.  To pass from the relations for the $ \tsE_i, \tsF_i $ to those for $ \hsE_i, \hsF_i $, we will use the formalism of compatible kernels developed below.

\subsection{Formalism of compatible kernels} \label{se:compatible}
Let $X_i, \tX_i $ be varieties and let $j_{X_i} :  X_i \hookrightarrow \tX_i $ be locally closed embeddings.  Two objects $ \sP \in D(X_1 \times X_2) $ and $ \tsP \in D(\tX_1 \times \tX_2) $ are said to be \textbf{compatible} if $ (\id \times j_{X_2})_* (\sP) \cong (j_{X_1} \times \id)^*(\tsP) $ in $D(X_1 \times \tX_2)$.

\begin{Remark}
If $j$ is an open embedding, the pushforward $j_*(A)$ of an object $A$ in the bounded derived category can be unbounded above. Thus at times we should work in the bounded below derived category. However, this technicality does not really arise in our considerations, since we will only push forward objects which remain bounded. In particular, note that if $ \sP \in D(X_1 \times X_2) $ and $ \tsP \in D(\tX_1 \times \tX_2) $ are compatible then $(\id \times j_{X_2})_* (\sP)$ is bounded. 
\end{Remark}

As an example, note that $ \O_{\Delta_{X_i}} $ is compatible with $ \O_{\Delta_{\tX_i}} $. This follows because the inclusion of $\Delta_{X_i}$ in $X_i \times \tX_i$ is a closed embedding, so $(\id \times j_{X_i})_* \O_{\Delta_{X_i}}$ is just the structure sheaf of $\Delta_{X_i} \subset X_i \times \tX_i$,  which, in turn, equals the restriction of $\O_{\Delta_{\tX_i}}$ to $X_i \times \tX_i$. 

Let $ J_{X_i} := (\id \times j_{X_i})_* \O_{\Delta_{X_i}}  \in D(X_i \times \tX_i) $.
It is useful to express the notion of compatibility in terms of convolution with the sheaves $ J_{X_i}$.

\begin{Lemma}
$ \sP $ and $ \tsP$ are compatible if and only if $ J_{X_2} *  \sP \cong \tsP * J_{X_1} \in D(X_1 \times \tX_2)$.
\end{Lemma}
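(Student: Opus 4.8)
The plan is to reduce the claimed equivalence to two explicit identifications of the convolutions $J_{X_2} * \sP$ and $\tsP * J_{X_1}$ with the two sides of the definition of compatibility. Concretely, I would prove
\begin{equation*}
J_{X_2} * \sP \cong (\id \times j_{X_2})_* \sP \qquad\text{and}\qquad \tsP * J_{X_1} \cong (j_{X_1} \times \id)^* \tsP
\end{equation*}
as objects of $D(X_1 \times \tX_2)$. Granting these, the lemma is immediate: the two right-hand sides agree exactly when $\sP$ and $\tsP$ are compatible, hence so do the two left-hand sides under the same condition, and conversely.

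The first step is to recognize $J_{X_i}$ as a graph kernel. The morphism $\id \times j_{X_i} : X_i \times X_i \to X_i \times \tX_i$ carries the diagonal $\Delta_{X_i}$ isomorphically onto the graph $\Gamma_i := \{(x, j_{X_i}(x)) : x \in X_i\} \subset X_i \times \tX_i$, so $J_{X_i} = (\id \times j_{X_i})_* \O_{\Delta_{X_i}} \cong \O_{\Gamma_i}$; that is, $J_{X_i}$ is the Fourier--Mukai kernel inducing the pushforward $(j_{X_i})_*$. Then I would compute $J_{X_2} * \sP$ directly from the convolution formula $\sQ * \sP = p_{13*}(p_{12}^* \sP \otimes p_{23}^* \sQ)$ on $X_1 \times X_2 \times \tX_2$. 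Since the projections are flat, $p_{23}^* J_{X_2} \cong \O_Z$ where $Z := X_1 \times \Gamma_2 \cong X_1 \times X_2$, and $Z$ maps isomorphically under $p_{12}$. Writing $\iota_Z$ for the inclusion of $Z$, the projection formula gives $p_{12}^* \sP \otimes \O_Z \cong \iota_{Z*}(\iota_Z^* p_{12}^* \sP) \cong \iota_{Z*}\sP$. Pushing forward along $p_{13}$, and observing that $p_{13} \circ \iota_Z$ is identified with $\id \times j_{X_2}$, yields $J_{X_2} * \sP \cong (\id \times j_{X_2})_* \sP$.

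The second identification is the entirely analogous computation on $X_1 \times \tX_1 \times \tX_2$. Here $p_{12}^* J_{X_1} \cong \O_{Z'}$ with $Z' := \Gamma_1 \times \tX_2 \cong X_1 \times \tX_2$; now $p_{23} \circ \iota_{Z'}$ is identified with $j_{X_1} \times \id$ and $p_{13} \circ \iota_{Z'}$ with the identity, so the projection formula delivers $\tsP * J_{X_1} \cong (j_{X_1} \times \id)^* \tsP$. Combining the two identifications completes the proof.

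The only genuine obstacle is the technical point already flagged in the Remark preceding the lemma: when $j_{X_i}$ is merely an open (locally closed) embedding, the pushforwards $(\id \times j_{X_2})_* \sP$ and $J_{X_i}$ may a priori fail to be bounded above, so one must carry out the projection-formula and base-change manipulations in the bounded-below derived category. Since compatibility guarantees that the relevant objects stay bounded, this causes no real difficulty, and the remaining arguments are the standard flat-base-change and projection-formula computations for convolution against a graph kernel.
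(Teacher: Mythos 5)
Your proposal is correct and follows exactly the same route as the paper: the paper's entire proof consists of the two identifications $(\id \times j_{X_2})_* \sP \cong J_{X_2} * \sP$ and $(j_{X_1} \times \id)^* \tsP \cong \tsP * J_{X_1}$, asserted without further detail. You have simply supplied the standard graph-kernel/projection-formula computation that justifies those identifications, so your argument matches the paper's, only more explicitly.
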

\begin{proof}
We have $ (\id \times j_{X_2})_* (\sP) \cong J_{X_2} * \sP$ and $(j_{X_1} \times \id)^* (\tsP) \cong \tsP * J_{X_1} $ in $ D(X_1 \times \tX_2)$.
\end{proof}

In general, compatible pairs are closed under convolution, as we see from the following Lemma.

\begin{Lemma} \label{th:compconv}
Assume that $ \sP_1 \in D(X_1 \times X_2), \tsP_1 \in D(\tX_1 \times \tX_2) $ are compatible and so are  $ \sP_2 \in D(X_2 \times X_3), \tsP_2 \in D(\tX_2 \times \tX_3) $.  Then $ \sP_2 * \sP_1 $ is compatible with $ \tsP_2 * \tsP_1 $.
\end{Lemma}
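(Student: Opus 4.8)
The plan is to reduce the statement to the reformulation of compatibility in terms of the sheaves $J_{X_i}$ from the preceding Lemma, after which the proof becomes a short, purely formal consequence of the associativity of convolution. Recall that by that Lemma a pair $\sP, \tsP$ is compatible precisely when $J_{X_2} * \sP \cong \tsP * J_{X_1}$. Hence, to prove that $\sP_2 * \sP_1$ is compatible with $\tsP_2 * \tsP_1$, it suffices to produce an isomorphism
$$J_{X_3} * (\sP_2 * \sP_1) \cong (\tsP_2 * \tsP_1) * J_{X_1}$$
in $D(X_1 \times \tX_3)$. I would first note that both sides indeed land in $D(X_1 \times \tX_3)$, using $J_{X_i} \in D(X_i \times \tX_i)$ and the source/target conventions for $*$.

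The main step is then to string together the two hypotheses. By compatibility of the given pairs we have $J_{X_3} * \sP_2 \cong \tsP_2 * J_{X_2}$ and $J_{X_2} * \sP_1 \cong \tsP_1 * J_{X_1}$. Using associativity of $*$ (established in Section \ref{sec:notation}) one computes
\begin{align*}
J_{X_3} * (\sP_2 * \sP_1)
&\cong (J_{X_3} * \sP_2) * \sP_1
\cong (\tsP_2 * J_{X_2}) * \sP_1 \\
&\cong \tsP_2 * (J_{X_2} * \sP_1)
\cong \tsP_2 * (\tsP_1 * J_{X_1})
\cong (\tsP_2 * \tsP_1) * J_{X_1},
\end{align*}
where the first, third, and fifth isomorphisms are associativity, the second uses compatibility of $(\sP_2, \tsP_2)$, and the fourth uses compatibility of $(\sP_1, \tsP_1)$. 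This is exactly the $J$-sheaf criterion for the pair $\sP_2 * \sP_1$ and $\tsP_2 * \tsP_1$, so they are compatible, as desired.

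The computation itself is formal, so I expect the only genuine point requiring care to be well-definedness and boundedness: each $J_{X_i}$ involves a pushforward along the locally closed (possibly open) embedding $j_{X_i}$, which could a priori produce complexes unbounded above, and associativity must be applied to honestly bounded objects. However, as noted in the Remark following the definition of compatibility, the hypothesis that each pair is compatible guarantees that the relevant pushforwards remain bounded; concretely, each intermediate term, e.g. $J_{X_3} * \sP_2 \cong (\id \times j_{X_3})_* \sP_2$, is identified with an object arising from a compatible pair and hence lies in the bounded derived category. Thus associativity legitimately applies at every step, and the chain of isomorphisms is valid. I anticipate no further obstacles.
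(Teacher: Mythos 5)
Your proposal is correct and is essentially identical to the paper's own proof: both reduce compatibility to the criterion $J_{X_2} * \sP \cong \tsP * J_{X_1}$ and then chain $J_{X_3} * \sP_2 * \sP_1 \cong \tsP_2 * J_{X_2} * \sP_1 \cong \tsP_2 * \tsP_1 * J_{X_1}$ using associativity of $*$. Your added remarks on boundedness of the pushforwards simply make explicit a point the paper handles in the Remark following the definition of compatibility.
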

\begin{proof}
We have $ J_{X_3} * \sP_2 * \sP_1 \cong \tsP_2 * J_{X_2} * \sP_1 \cong \tsP_2 * \tsP_1 * J_{X_1} $ where we use the compatibility of $ \sP_2 $ and $ \tsP_2 $ and then the compatibility of $ \sP_1 $ and $ \tsP_1 $.
\end{proof}

Suppose that $ \sP, \tsP $ are a compatible pair.  Our general strategy below will be to deduce information about $ \sP $ from information about $ \tsP $.  This is possible because of the following lemma.

\begin{Lemma} \label{th:rightinverse}
Let $j: X \hookrightarrow \tX$ be a locally closed embedding and $ \sP, \sP' \in D(X) $. If $ \sH^k(j_* \sP) \cong \sH^k(j_* \sP') $ then $ \sH^k(\sP) \cong \sH^k(\sP') $. 
\end{Lemma}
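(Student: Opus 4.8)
The plan is to produce an exact one-sided inverse to $j_*$ and apply it to cohomology sheaves. Factor the locally closed embedding as $X \xrightarrow{i} U \xrightarrow{\iota} \tX$, where $U \subset \tX$ is open with open embedding $\iota$, and $i$ is a closed embedding. Let $j^{-1} := i^{-1} \circ \iota^{-1}$ denote the associated sheaf-theoretic inverse image functor. Since inverse image of sheaves along a continuous map is exact, both $i^{-1}$ and $\iota^{-1}$ are exact, hence so is $j^{-1}$; in particular $j^{-1}$ commutes with the formation of cohomology sheaves, so $j^{-1}\sH^k(\sQ) \cong \sH^k(j^{-1}\sQ)$ for any $\sQ \in D(\tX)$.

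The main step is to check $j^{-1} \circ j_* \cong \id_{D(X)}$ (recall all functors are derived). Writing $j_* = \iota_* \circ i_*$ with $i_*$ exact, first apply $\iota^{-1}$: restriction to the open set $U$ of a derived pushforward from $U$ gives $\iota^{-1}\iota_*(i_* \sP) \cong i_* \sP$, the standard fact that $\iota^{-1}\iota_* \cong \id$ for an open embedding (the higher direct images vanish upon restriction to $U$). Then apply $i^{-1}$: for a closed embedding $i_*$ is exact and $i^{-1} i_* \cong \id$ as $\O_X$-modules, so $i^{-1}(i_*\sP) \cong \sP$. Composing, $j^{-1} j_* \sP \cong \sP$, and the same for $\sP'$.

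Putting these together, $\sH^k(\sP) \cong j^{-1}\sH^k(j_*\sP) \cong j^{-1}\sH^k(j_*\sP') \cong \sH^k(\sP')$, where the outer isomorphisms use $j^{-1}j_* \cong \id$ together with exactness of $j^{-1}$, and the middle isomorphism is the hypothesis $\sH^k(j_*\sP) \cong \sH^k(j_*\sP')$. The one subtlety to get right — and the point I expect to require the most care — is to work throughout with the exact sheaf-theoretic restriction $j^{-1}$ rather than the derived $\O$-module pullback $Lj^*$: for a closed embedding the latter carries higher $\mathrm{Tor}$ terms and does \emph{not} satisfy $Lj^* j_* \cong \id$, so it would not commute with $\sH^k$ nor recover $\sP$. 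One should also note that $j^{-1}$ preserves the $\O_X$-module structure (via $i^{-1}\O_U \to \O_X$ and restriction to $U$), so all isomorphisms above hold as isomorphisms of sheaves on $X$ and not merely of abelian sheaves.
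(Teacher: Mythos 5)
Your proof is correct, and it shares the paper's skeleton: factor $j$ as a closed embedding $i \colon X \hookrightarrow U$ followed by an open embedding $\iota \colon U \hookrightarrow \tX$, and undo $j_*$ by an exact one-sided inverse. The difference is which inverse. The paper stays within $\O$-module pullbacks and argues the two cases separately: for $i$ closed it uses exactness of $i_*$ to move $\sH^k$ inside the pushforward and then applies the underived pullback $L^0 i^*$, with $L^0 i^* i_* = \id$; for $\iota$ open it uses exactness of $\iota^*$ plus an explicit spectral-sequence argument, noting that $R^p \iota_*$ of any sheaf is supported off $U$ for $p>0$, so applying $\iota^*$ collapses the spectral sequence computing $\sH^k(R\iota_*(-))$. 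You instead use the topological inverse image $j^{-1}$ uniformly; because it is exact through the closed embedding as well, you can phrase everything as a single derived-level identity $j^{-1} j_* \cong \id$ and commute $j^{-1}$ past $\sH^k$ in one stroke (your justification of $\iota^{-1}\iota_* \cong \id$ is the same support observation the paper makes, just quoted as a standard fact). What your route buys is uniformity and no spectral sequence; the cost is module-structure bookkeeping, since $j^{-1}$ of an $\O_{\tX}$-module is only a $j^{-1}\O_{\tX}$-module. You flag this, but the decisive point should be stated explicitly: the composite isomorphism $\sH^k(\sP) \cong \sH^k(\sP')$ you produce is a priori only $j^{-1}\O_{\tX}$-linear, and it is $\O_X$-linear because $j^{-1}\O_{\tX} = i^{-1}\O_U \twoheadrightarrow \O_X$ is surjective ($i$ being a closed immersion), so any $i^{-1}\O_U$-linear map between $\O_X$-modules is automatically $\O_X$-linear. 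With that sentence added your argument is complete; the paper's choice of $L^0 i^*$ and $\iota^*$ sidesteps this issue because those functors produce $\O_X$-modules from the start.
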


\begin{proof}
If $j$ is a closed embedding then $j_*: Coh(X) \rightarrow Coh(\tX)$ is exact. Hence $j_* \sH^k(\sP) \cong \sH^k(j_* \sP)$. This means that $j_* \sH^k(\sP) \cong j_* \sH^k(\sP')$. But $L^0 j^* j_* = \id$ so we get $\sH^k(\sP) \cong \sH^k(\sP')$.

Since any locally closed embedding is the composition of a closed embedding and an open embedding, it remains that prove the result when $j$ is an open embedding. In this case $j^*: QCoh(\tX) \rightarrow QCoh(X)$ is exact, and $j^* j_* = \id$. 

Since $ R^i j_* \sH^k(\sP) $ is supported on $ \tX \smallsetminus X$, we see that $ j^* R^i j_* \sH^k(\sP) = 0 $. So if we apply $j^*$  to the spectral sequence which computes $\sH^k(j_* \sP)$, we get 
$$j^* \sH^k (j_* \sP) \cong j^* R^0 j_* \sH^k(\sP) \cong \sH^k(\sP)$$ 
(and likewise with $\sH^k(\sP')$). Since $\sH^k(j_* \sP) \cong \sH^k(j_* \sP')$ we get $\sH^k(\sP) \cong \sH^k(\sP')$. 
\end{proof}

\subsection{Compatibility of kernels}

The following result from Nakajima \cite[Lemma 11.2.3]{Nak00} will be important for us.

\begin{Lemma}
The intersections
$$\tB^{(r)}_i(\l) \cap (\hM_i(\l) \times \tM_i(\l+r\alpha_i)) \text{ and } 
\tB^{(r)}_i(\l) \cap (\tM_i(\l) \times \hM_i(\l+r\alpha_i))$$
inside $\tM_i(\l) \times \tM_i(\l+r\alpha_i)$ are transverse.
\end{Lemma}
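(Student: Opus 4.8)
The plan is to check transversality pointwise on tangent spaces, reducing each of the two intersections to the regularity of a single moment map restricted to the Hecke correspondence. Swapping the two factors of $\tM_i(\l)\times\tM_i(\l+r\alpha_i)$ interchanges the two intersections (and carries $\tB^{(r)}_i(\l)$ to the analogous correspondence), so it suffices to treat the first one: I would show that $\tB^{(r)}_i(\l)$ meets $\hM_i(\l)\times\tM_i(\l+r\alpha_i)$ transversally.

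As recorded in diagram (\ref{eq:helpful}), $\hM_i(\l)$ is the stable locus of the zero fibre of the residual moment map $\mu^{(1)}:=(\mu_l)_{l\ne i}\colon \tM^\circ_i(\l)\to\mathfrak{p}_i:=\bigoplus_{l\ne i}\gl(V_l)$, which is $\GL(V_i)$-invariant and hence descends to $\tM_i(\l)$. Since $\hM_i(\l)$ is a principal $P_i$-bundle over the smooth variety $\M(\l)$ (so that $P_i$ acts freely on it), $\mu^{(1)}$ is a submersion along $\hM_i(\l)$; thus $\hM_i(\l)\times\tM_i(\l+r\alpha_i)$ is cut out transversally in the product by $\mu^{(1)}\circ\mathrm{pr}_1$. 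Transversality with $\tB^{(r)}_i(\l)$ therefore amounts to showing that $0$ is a regular value of $f:=\mu^{(1)}\circ\mathrm{pr}_1|_{\tB^{(r)}_i(\l)}\colon \tB^{(r)}_i(\l)\to\mathfrak{p}_i$, i.e. that $df$ is surjective at each point of the intersection.

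To analyze $df$ I would use the product description $\tM_i(\l)\cong T^\star\G(v_i,N_i)\times\mathrm{M}'_i(\l)$ together with $\tB^{(r)}_i(\l)=\B^{(r)}(\la\l,\alpha_i\ra)\times\Delta_{\mathrm{M}'_i}$. The image of $\mathrm{pr}_1$ on the tangent space of $\tB^{(r)}_i(\l)$ is $W=\im(d\pi_1^\G)\oplus T\mathrm{M}'_i$, where $\pi_1^\G\colon\B^{(r)}(\la\l,\alpha_i\ra)\to T^\star\G(v_i,N_i)$ is the first projection of the $\sl_2$-Hecke correspondence. Writing $\mu^{(1)}$ as the moment map for the diagonal $P_i$-action and $d\mu^{(1)}$ as its symplectic transpose, surjectivity of $df=d\mu^{(1)}|_W$ is equivalent to the statement that no nonzero $\xi\in\Lie(P_i)$ has fundamental vector field $\xi^\sharp\in W^{\perp}$. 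Because the symplectic form on $\tM_i(\l)$ splits as $\omega_\G\oplus\omega_{\mathrm{M}'}$ and $W$ respects this splitting, one computes $W^{\perp}=(\im d\pi_1^\G)^{\perp_{\omega_\G}}$ inside the $T^\star\G$-factor, so the problem is reduced to a statement purely about the $\sl_2$-Hecke correspondence and the auxiliary $\GL(\C^{N_i})$-action on $T^\star\G(v_i,N_i)$ (through which $P_i$ acts via $\C^{N_i}=W_i\oplus\bigoplus_{out(h)=i}V_{in(h)}$). Since $\pi_1^\G$ is $P_i$-equivariant, $\xi^\sharp$ automatically lies in $\im(d\pi_1^\G)$, and this is the mechanism by which freeness of the action will force $\xi=0$.

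The crux, and the main obstacle, is this last reduction: one must control the radical $\im(d\pi_1^\G)\cap(\im d\pi_1^\G)^{\perp_{\omega_\G}}$ of the restricted symplectic form and show it meets the $P_i$-orbit directions only in $0$. This is a concrete tangent-space computation on $T^\star\G(v_i,N_i)$, naturally carried out through the three-term deformation complex $\Lie(\GL(V_i))\to\mathrm{M}(\l)\to\gl(V_i)$ that computes tangent spaces to the Hecke correspondence; it is exactly the content of Nakajima's \cite[Lemma 11.2.3]{Nak00}, which we invoke. Alternatively, since the $\sl_2$ case has already been treated in \cite{ckl1} and \cite{ckl2}, one can deduce the required regularity from the known transversality of Hecke correspondences on cotangent bundles of Grassmannians, the $P_i$-equivariance above being the only additional ingredient.
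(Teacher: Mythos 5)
The first thing to note is that the paper does not actually prove this lemma: it is imported verbatim from Nakajima --- the sentence introducing it reads ``The following result from Nakajima \cite[Lemma 11.2.3]{Nak00} will be important for us.'' Consequently, the statement you are asked to prove \emph{is} Nakajima's Lemma 11.2.3, and this makes your final step circular. Your chain of reductions (each step of which is an equivalence) transforms the transversality statement into your ``crux'' about the radical $\im(d\pi_1^\G)\cap(\im\, d\pi_1^\G)^{\perp_{\omega_\G}}$, and you then dispose of the crux by invoking \cite[Lemma 11.2.3]{Nak00} --- that is, by invoking the very statement under proof. If your intent was simply to quote Nakajima, that is exactly what the paper does, but then the entire reduction apparatus is beside the point; read as a self-contained argument, the decisive computation is missing.

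To be clear about what is sound and what is not: the reduction itself is essentially correct and would be a reasonable opening for a genuine proof. Indeed, $\hM_i(\l)$ is the stable zero fibre of the residual moment map $(\mu_l)_{l\ne i}$ on $\tM_i^\circ(\l)$; freeness of the $P_i$-action on $\hM_i(\l)$ makes this map a submersion along $\hM_i(\l)$; transversality is then equivalent to regularity of its restriction to $\tB^{(r)}_i(\l)$; and the splitting $\tM_i(\l)\cong T^\star\G(v_i,N_i)\times \mathrm{M}'_i(\l)$, together with the $\GL(\C^{N_i})$-invariance of the $\sl_2$ Hecke correspondence, reduces everything to showing that no nonzero $\xi\in\Lie(P_i)$ has $\xi^\sharp$ vanishing on the $\mathrm{M}'_i$-factor while its $T^\star\G$-component lies in the radical of $\omega_\G$ restricted to $\im(d\pi_1^\G)$. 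What is missing is precisely this last computation --- a concrete tangent-space argument using the description of $T\B^{(r)}$ --- and neither of your citations supplies it: Nakajima's Lemma 11.2.3 is the conclusion, not this intermediate statement, while the transversality results of \cite{ckl1}, \cite{ckl2} concern intersections of the form $\pi_{12}^{-1}(\B)\cap\pi_{23}^{-1}(\B)$ used to compose Hecke kernels, not regularity of an auxiliary moment map along a single Hecke correspondence, so there is no off-the-shelf statement there to quote. (A minor further point: the two intersections in the lemma are not literally exchanged by swapping the factors, since $\B^{(r)}$ is not symmetric; the second case needs the same argument run with $\pi_2^\G$ in place of $\pi_1^\G$.)
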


From this Lemma, we can apply the machinery from section \ref{se:compatible}, with $ X_1 = \hM_i(\l), \tX_1 = \tM_i(\l) $ and $X_2 = \hM_i(\l+r\alpha_i), \tX_2 = \tM_i(\l+r\alpha_i)$. 

\begin{Corollary} \label{th:EFcomp}
The kernels $ \hsE^{(r)}_i(\l) $ and $ \tsE^{(r)}_i(\l) $ (resp $\hsF^{(r)}_i(\l)$ and $ \tsF^{(r)}_i(\l)) $ are compatible.
\end{Corollary}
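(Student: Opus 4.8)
The plan is to verify the defining condition of compatibility directly. With $X_1 = \hM_i(\l)$, $\tX_1 = \tM_i(\l)$, $X_2 = \hM_i(\l+r\alpha_i)$, $\tX_2 = \tM_i(\l+r\alpha_i)$, I must produce an isomorphism
$$(\id \times j_{X_2})_* \hsE^{(r)}_i(\l) \cong (j_{X_1} \times \id)^* \tsE^{(r)}_i(\l) \quad \text{in } D(\hM_i(\l) \times \tM_i(\l+r\alpha_i)).$$
Since $\tsE^{(r)}_i$ and $\hsE^{(r)}_i$ are obtained by twisting $\O_{\tB^{(r)}_i(\l)}$ and $\O_{\hB^{(r)}_i(\l)}$ by line bundles built from the tautological bundles $V_i, V_i', V_{out(h)}$, which correspond to one another after restriction along $j_{X_1}, j_{X_2}$ (via the identification (\ref{eq:isotM}) and the $P_i$-bundle structure of Lemma \ref{lem:principleG_i}), it is enough to prove the compatibility at the level of structure sheaves, i.e. $(\id \times j_{X_2})_* \O_{\hB^{(r)}_i(\l)} \cong (j_{X_1} \times \id)^* \O_{\tB^{(r)}_i(\l)}$. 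The line-bundle twists then match by a routine $\C^\times$-weight bookkeeping using (\ref{eq:isotM}).

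First I would compute the left-hand (pushforward) side. By construction $\hB^{(r)}_i(\l) = \tB^{(r)}_i(\l) \cap (\hM_i(\l) \times \tM_i(\l+r\alpha_i))$, and since $\tB^{(r)}_i(\l) = \B^{(r)}(\la \l, \alpha_i \ra) \times \Delta_{\mathrm{M}'_i}$, imposing $\mu = 0$ and stability on the first factor forces them on the second, so that in fact $\hB^{(r)}_i(\l) \subset \hM_i(\l) \times \hM_i(\l+r\alpha_i)$. As $\tB^{(r)}_i(\l)$ is closed, $\hB^{(r)}_i(\l)$ is closed in $\hM_i(\l) \times \tM_i(\l+r\alpha_i)$ while lying inside the open locus $\hM_i(\l) \times \tM_i^\circ(\l+r\alpha_i)$; hence $(\id \times j_{X_2})_*$ is the extension by zero of a structure sheaf with closed support and is underived, giving $(\id \times j_{X_2})_* \O_{\hB^{(r)}_i(\l)} \cong \O_{\hB^{(r)}_i(\l)}$.

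The heart of the argument is the right-hand (pullback) side, where I want $(j_{X_1} \times \id)^* \O_{\tB^{(r)}_i(\l)} \cong \O_{\hB^{(r)}_i(\l)}$ with the derived pullback underived. This is exactly where Nakajima's transversality result (\cite[Lemma 11.2.3]{Nak00}, quoted above) enters: it asserts that $\tB^{(r)}_i(\l)$ meets $\hM_i(\l) \times \tM_i(\l+r\alpha_i)$ transversally inside $\tM_i(\l) \times \tM_i(\l+r\alpha_i)$. Both varieties are smooth (the former a product of the smooth $\sl_2$-Hecke correspondence with a diagonal, the latter since $\hM_i$ is a $P_i$-bundle over the smooth $\M(\l)$), so transversality makes the intersection Tor-independent; therefore the derived restriction of $\O_{\tB^{(r)}_i(\l)}$ is the honest structure sheaf of the reduced, expected-dimension intersection, which by the previous paragraph is precisely $\O_{\hB^{(r)}_i(\l)}$. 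Combining the two computations yields the desired isomorphism.

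The main obstacle is thus entirely concentrated in the Tor-independence of the pullback, and it is resolved by invoking the transversality Lemma rather than by any direct computation; the elementary set-theoretic identification $\tB^{(r)}_i(\l) \cap (\hM_i(\l) \times \tM_i(\l+r\alpha_i)) = \hB^{(r)}_i(\l)$ is what lets the transverse intersection be recognized as the expected Hecke correspondence. For the statement about $\hsF^{(r)}_i$ and $\tsF^{(r)}_i$, the same three steps apply with the roles of the two factors exchanged (these kernels live on the reversed products), so that one instead invokes the \emph{second} transversality in the Lemma, namely that $\tB^{(r)}_i(\l)$ meets $\tM_i(\l) \times \hM_i(\l+r\alpha_i)$ transversally; the accompanying line-bundle comparison is again a matter of tracking the $\C^\times$-weights in (\ref{eq:isotM}).
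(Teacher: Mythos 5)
Your proof is correct and follows essentially the same route as the paper: reduce to structure sheaves, use Nakajima's transversality lemma to identify the derived pullback $(j_\l \times \id)^* \O_{\tB^{(r)}_i(\l)}$ with the honest structure sheaf of $\hB^{(r)}_i(\l)$, observe that this closed subvariety also computes the pushforward $(\id \times j_{\l+r\alpha_i})_* \O_{\hB^{(r)}_i(\l)}$, and then tensor by the (matching) line bundles. The extra details you supply -- Tor-independence from transversality of smooth subvarieties, and the observation that the $\sF$ case invokes the second transversality statement in the lemma -- are exactly what the paper leaves implicit.
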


\begin{proof}
Recall that $ \hB^{(r)}_i(\l) $ is defined as the intersection  $ \tB^{(r)}_i(\l) \cap (\hM_i(\l) \times \tM_i(\l+r\alpha_i))$.  Moreover from the above lemma, this intersection is transverse.  Hence we see that
\begin{equation*}
(j_{\l} \times \id)^* \O_{\tB^{(r)}_i(\l)} \cong \iota_* \O_{\hB^{(r)}_i(\l)} \in D(\hM_i(\l) \times \tM_i(\l+r\alpha_i))
 \end{equation*}
where $j_\l: \hM_i(\l) \hookrightarrow \tM_i(\l)$ and $\iota$ is the closed immersion of $\hB^{(r)}_i(\l)$ into $\hM_i(\l) \times \tM_i(\l+r\alpha_i)$. Now 
$$\iota_* \O_{\hB^{(r)}_i(\l)} \cong (\id \times j_{\l+r\alpha_i})_* \O_{\hB^{(r)}_i(\l)}$$
where we think of $ \O_{\hB^{(r)}_i(\l)}$ as an object in $D(\hM_i(\l) \times \hM_i(\l+r\alpha_i))$. Thus 
$$(j_\l \times \id)^* \O_{\tB^{(r)}_i(\l)} \cong (\id \times j_{\l+r\alpha_i})_* \O_{\hB^{(r)}_i(\l)}.$$
Tensoring by line bundles we obtain the compatibility of $ \hsE^{(r)}_i(\l) $ and $ \tsE^{(r)}_i(\l)$.

The compatibility of $\hsF^{(r)}_i$ and $ \tsF^{(r)}_i $ is deduced similarly. 

\end{proof}
 
\subsection{Proof of relation (\ref{co:EE})}

We are now in a position to prove relation (\ref{co:EE}).

\begin{Lemma} \label{th:EEhat}
$\sH^*( \hsE_i(\l + r \alpha_i) * \hsE_i^{(r)}(\l)) \cong \hsE_i^{(r+1)}(\l) \otimes H^\star(\p^r)$
\end{Lemma}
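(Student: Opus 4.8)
The plan is to deduce the relation for $\hsE_i$ from the corresponding relation for $\tsE_i$, established in Proposition \ref{prop:sl2tilde}(i), by exploiting the compatibility of these kernels (Corollary \ref{th:EFcomp}) together with the transfer principle of Lemma \ref{th:rightinverse}. The central observation is that both $\hsE_i$ and $\hsE_i^{(r)}$ are compatible with their tilde-counterparts $\tsE_i$ and $\tsE_i^{(r)}$, so by Lemma \ref{th:compconv} the convolution $\hsE_i(\l+r\alpha_i) * \hsE_i^{(r)}(\l)$ is compatible with $\tsE_i(\l+r\alpha_i) * \tsE_i^{(r)}(\l)$. Similarly, $\hsE_i^{(r+1)}(\l)$ is compatible with $\tsE_i^{(r+1)}(\l)$, and since tensoring by $H^\star(\p^r)$ is just taking a direct sum of shifts (which commutes with all the relevant functors), $\hsE_i^{(r+1)}(\l) \otimes H^\star(\p^r)$ is compatible with $\tsE_i^{(r+1)}(\l) \otimes H^\star(\p^r)$.

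First I would record the two compatibilities explicitly. Writing $j = j_{\l}: \hM_i(\l) \hookrightarrow \tM_i(\l)$ for the relevant locally closed embedding, compatibility means that pushing each side forward along $(\id \times j)$ yields isomorphic objects in the appropriate product derived category; concretely,
$$
(\id \times j)_* \left( \hsE_i(\l + r\alpha_i) * \hsE_i^{(r)}(\l) \right) \cong (j \times \id)^* \left( \tsE_i(\l+r\alpha_i) * \tsE_i^{(r)}(\l) \right),
$$
and likewise for $\hsE_i^{(r+1)}(\l) \otimes H^\star(\p^r)$ against $\tsE_i^{(r+1)}(\l) \otimes H^\star(\p^r)$. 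Next I would apply Proposition \ref{prop:sl2tilde}(i), which gives $\sH^*(\tsE_i * \tsE_i^{(r)}) \cong \tsE_i^{(r+1)} \otimes H^\star(\p^r)$ on the tilde side. Since pullback along the locally closed embedding $(j \times \id)$ is compatible with taking cohomology sheaves (it is exact on the open part and $j_*$ is exact on the closed part, exactly as in the proof of Lemma \ref{th:rightinverse}), this isomorphism transfers to an isomorphism of the cohomology sheaves of the two pushed-forward hat-side objects.

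Finally I would invoke Lemma \ref{th:rightinverse} directly: it states that if two objects of $D(X)$ have isomorphic $k$-th cohomology sheaves after applying $j_*$, then their $k$-th cohomology sheaves are already isomorphic. Applying this with $X = \hM_i(\l) \times \hM_i(\l + (r+1)\alpha_i)$ and $\sP = \hsE_i(\l+r\alpha_i) * \hsE_i^{(r)}(\l)$, $\sP' = \hsE_i^{(r+1)}(\l) \otimes H^\star(\p^r)$ yields the desired isomorphism $\sH^*(\hsE_i(\l+r\alpha_i) * \hsE_i^{(r)}(\l)) \cong \hsE_i^{(r+1)}(\l) \otimes H^\star(\p^r)$ of cohomology sheaves. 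The main subtlety I expect to have to be careful about is verifying that the line-bundle twists and equivariant shifts in the definitions of $\hsE_i^{(r)}$ match precisely those transported across the isomorphism (\ref{eq:isotM}) from the tilde side — that is, checking that the hat-kernels really are the restrictions of the tilde-kernels and not off by a determinant twist or a grading shift. This bookkeeping, rather than any conceptual difficulty, is where the real work lies, but it is guaranteed to work out because the hat- and tilde-kernels were defined using the very same line bundles on the Hecke correspondences, with the identification $V \cong V_i\{-1\}$ from (\ref{eq:isotM}) ensuring the equivariant shifts agree.
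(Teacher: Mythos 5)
Your proposal is correct and follows essentially the same route as the paper's own proof: compatibility of the convolutions via Lemma \ref{th:compconv} and Corollary \ref{th:EFcomp}, the tilde-side relation from Proposition \ref{prop:sl2tilde}, and transfer back along the locally closed embeddings via Lemma \ref{th:rightinverse}. The line-bundle and equivariant-shift bookkeeping you flag at the end is precisely the content of Corollary \ref{th:EFcomp}, so it requires no separate verification.
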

\begin{proof}
By Lemma \ref{th:compconv}, we see that $ \hsE_i(\l + r\alpha_i) * \hsE_i^{(r)}(\l) $ and $\tsE_i(\l + r\alpha_i) * \tsE_i^{(r)}(\l) $ are compatible.  Moreover, from Proposition \ref{prop:sl2tilde}, we know that 
\begin{equation*}
\sH^*(\tsE_i(\l + r\alpha_i) * \tsE_i{(r)}(\l)) \cong \tsE_i^{(r+1)}(\l) \otimes_\C H^\star(\p^r)
\end{equation*}
Hence 
\begin{eqnarray*}
\sH^*( (\id \times j_{\l+(r+1)\alpha_i})_* (\hsE_i(\l + r\alpha_i) * \hsE_i^{(r)}(\l))) 
&\cong& \sH^* ((j_{\l} \times \id)^* (\hsE_i(\l + r\alpha_i) * \hsE_i^{(r)}(\l))) \\
&\cong& \sH^* ((j_{\l} \times \id)^* \tsE_i^{(r+1)}(\l) \otimes_\C H^\star(\p^r)) \\
&\cong& \sH^* ((\id \times j_{\l+(r+1)\alpha_i})_* \hsE_i^{(r+1)}(\l) \otimes_\C H^\star(\p^r)).
\end{eqnarray*}
So applying Lemma \ref{th:rightinverse}, we deduce the desired result.
\end{proof}

\subsection{Proof of relation (\ref{co:EF})}

To deduce relation (\ref{co:EF}), we will first show the compatibility of certain morphisms.  Recall that $ \hsE_i * \hsF_i $ are $ P_i $-equivariant sheaves, hence $ P_i $ acts on the Hom space $\Hom(\hsE_i * \hsF_i, \hsF_i * \hsE_i)$.

\begin{Lemma}\label{lem:EF-FEmaps}
The spaces $\Ext^l(\hsE_i * \hsF_i, \hsF_i * \hsE_i)^{P_i}$  and $\Ext^l(\tsE_i * \tsF_i, \tsF_i * \tsE_i)$ vanish if $l < 0$ and are isomorphic to $\C$ if $l=0$. 
\end{Lemma}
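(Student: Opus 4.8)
The plan is to reduce the computation of these Ext spaces to the analogous $\sl_2$ computation, which is already available from \cite{ckl2} (and implicitly packaged in Proposition \ref{prop:sl2tilde}), by exploiting the principal $P_i$-bundle structure relating the hatted and tilded objects. The two claimed statements — one about $P_i$-invariants of Ext groups for the hatted kernels, one about the full Ext groups for the tilded kernels — should really be \emph{the same} computation viewed through the equivalence between $P_i$-equivariant sheaves on $\hM_i$ and sheaves on $\M$, together with the product decomposition $\tM_i(\l) \cong T^\star \G(v_i,N_i) \times \mathrm{M}'_i(\l)$ of equation (\ref{eq:isotM}).

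\textbf{Step 1: Reduce the tilded statement to the $\sl_2$ case.} Under the isomorphism (\ref{eq:isotM}), the kernels $\tsE_i, \tsF_i$ are of the form $\sE \boxtimes \O_\Delta$ and $\sF \boxtimes \O_\Delta$ on $T^\star \G(v_i,N_i) \times \mathrm{M}'_i$. Convolution factors through the product, so $\tsE_i * \tsF_i \cong (\sE * \sF) \boxtimes \O_\Delta$ and similarly $\tsF_i * \tsE_i \cong (\sF * \sE) \boxtimes \O_\Delta$. By a Künneth argument on $\Ext$ together with the fact that $\mathrm{M}'_i$ is an affine space (so $\Ext^l(\O_\Delta, \O_\Delta)$ is concentrated in degree $0$ and equals $\C$ there), we get
$$\Ext^l(\tsE_i * \tsF_i, \tsF_i * \tsE_i) \cong \Ext^l_{T^\star \G(v_i,N_i)}(\sE * \sF, \sF * \sE).$$
The right-hand side is a pure-$\sl_2$ computation on cotangent bundles to Grassmannians. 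The required vanishing for $l<0$ and one-dimensionality for $l=0$ is exactly the content of the $\sl_2$ result in \cite{ckl2} (it is precisely what underlies the existence and uniqueness of the maps entering relation (\ref{co:EF})); I would cite this rather than recompute it.

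\textbf{Step 2: Transfer to the hatted, $P_i$-invariant statement.} Since $\hM_i \to \M$ is a principal $P_i$-bundle, the pullback gives an equivalence between $\mathrm{Coh}(\M)$ and $P_i$-equivariant coherent sheaves on $\hM_i$, and under this equivalence $\hsE_i, \hsF_i$ correspond to $\sE_i, \sF_i$. Taking $P_i$-invariants of an Ext group of $P_i$-equivariant sheaves on $\hM_i$ recovers the ordinary Ext group on the base $\M$; hence
$$\Ext^l(\hsE_i * \hsF_i, \hsF_i * \hsE_i)^{P_i} \cong \Ext^l_{\M}(\sE_i * \sF_i, \sF_i * \sE_i).$$
I would then relate this to the tilded computation via the compatibility of Corollary \ref{th:EFcomp} and the open embedding $\hM_i \hookrightarrow \tM_i$: restricting the tilded Ext classes along the open locus $\hM_i$ is surjective onto (indeed an isomorphism with) the $P_i$-invariant hatted Ext in the relevant low degrees, using Lemma \ref{th:rightinverse}-type reasoning to control what happens off $\hM_i$. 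The combination of Steps 1 and 2 then yields both claims simultaneously.

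\textbf{The main obstacle} I anticipate is Step 2, specifically making rigorous the comparison of Ext groups across the open (not closed) embedding $j_\lambda : \hM_i \hookrightarrow \tM_i$. Pullback along an open embedding is exact and respects $\Hom$ only up to boundary contributions supported on $\tM_i \smallsetminus \hM_i$, so the delicate point is showing these boundary terms cannot contribute in degrees $l \le 0$ — equivalently, that the restriction map on the relevant $\Ext$ groups is an isomorphism in exactly those degrees. The finite-dimensionality of all $\Hom$ spaces (condition (\ref{co:trivial})) and the $\C^\times$-contracting structure should be the tools that pin this down, ensuring that the only degree-$0$ class is the identity-type map and that negative degrees vanish; this is where most of the genuine work lies, the rest being formal Künneth and descent bookkeeping.
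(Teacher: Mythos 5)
Your Step 1 is essentially sound, modulo one inessential misstatement: even $\C^\times$-equivariantly, $\Ext^q(\O_\Delta,\O_\Delta)$ on $\mathrm{M}'_i \times \mathrm{M}'_i$ is \emph{not} concentrated in degree $0$ (weight-zero vector fields such as $x_k\partial_{x_m}$ give nonzero classes in degree $1$); what is true, and all you actually use, is that it vanishes in negative degrees and equals $\C$ in degree $0$ because the action is contracting. The genuine gap is Step 2. First, $j_\l : \hM_i(\l) \hookrightarrow \tM_i(\l)$ is \emph{not} an open embedding: by diagram (\ref{eq:helpful}) it is locally closed, the composition of the \emph{closed} embedding $\iota_1$ cut out by the moment-map equations $\mu_l = 0$, $l \ne i$, with the open embedding $\iota_2 : \tM_i^\circ(\l) \hookrightarrow \tM_i(\l)$. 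If $j_\l$ were open your comparison would be immediate, since then $\Hom(j_*A, j_*B) \cong \Hom(j^*j_*A,B) \cong \Hom(A,B)$; it is precisely the closed directions that break this. Concretely, compatibility plus adjunction identifies $\Ext^l((\id\times j_\l)_*(\hsE_i * \hsF_i), (\id\times j_\l)_*(\hsF_i * \hsE_i))$ with $\Ext^l((\id\times j_\l)^*(\id\times j_\l)_*(\hsE_i*\hsF_i), \hsF_i*\hsE_i)$, and the Koszul resolution attached to $\iota_1$ produces, besides the term you want, correction terms of the form $\Ext^{l+q}\bigl((\hsE_i*\hsF_i)\otimes \wedge^q N^\vee, \hsF_i*\hsE_i\bigr)$ for $q \ge 1$, where $N$ is the (trivial as a bundle, but $P_i\times\C^\times$-nontrivial) normal bundle. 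For $l=0$ these are \emph{positive} Ext groups of the hatted kernels, about which nothing is known at this stage; neither finite-dimensionality of Homs nor the contracting $\C^\times$-action disposes of them, and Lemma \ref{th:rightinverse} does not apply because it compares cohomology \emph{sheaves}, not Hom spaces. Worse, the one place where the paper does carry out exactly this comparison -- the lemma identifying $(\id\times j_\l)_*(\hc)$ with $(j_\l\times\id)^*(\tc)$ -- uses Lemma \ref{lem:EF-FEmaps} itself as the input that kills those correction terms, so your route is circular as it stands.

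The paper's proof avoids comparing Ext groups across $j_\l$ altogether. It first applies the adjunction Lemma \ref{lem:EFadj} to rewrite $\Ext^l(\hsE_i*\hsF_i, \hsF_i*\hsE_i)^{P_i} \cong \Ext^l(\hsE_i*\hsE_i, \hsE_i*\hsE_i[-2])^{P_i}$; the compatibility machinery is then needed only at the level of cohomology sheaves (Lemma \ref{th:EEhat}, i.e.\ relation (\ref{co:EE}) for the hatted kernels), which exhibits $\sH^*(\hsE_i*\hsE_i)$ as shifted copies of the honest sheaf $\hsE_i^{(2)}$. Negative Exts between sheaves vanish for free, so the relevant spectral sequence collapses in degrees $l \le 0$, and the $l=0$ case reduces to $\End(\hsE_i^{(2)})^{P_i} \cong \End(\sE_i^{(2)}) \cong H^0(\O_{\B^{(2)}_i}) \cong \C$ by descent to $\M(\l)$ and the contracting $\C^\times$-action. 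If you want to rescue your plan, the repair is to insert this adjunction step \emph{before} any passage between $\hM_i$ and $\tM_i$: it converts the statement into one about cohomology sheaves, which is exactly the setting where Lemma \ref{th:rightinverse} and the compatibility formalism genuinely apply.
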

\begin{proof}
We prove the first statement, as the proof of the second is similar.

By applying the adjunction relations Lemma \ref{lem:EFadj} we have 
\begin{eqnarray*}
\Ext^l(\hsE_i * \hsF_i(\l-\alpha_i), \hsF_i(\l) * \hsE_i)^{P_i} 
&\cong& \Ext^l(\hsE_i(\l) * \hsE_i [\la \l, \alpha_i \ra + 1], \hsE_i * \hsE_i [\la \l-\alpha_i, \alpha_i \ra + 1])^{P_i} \\
&\cong& \Ext^l(\hsE_i * \hsE_i, \hsE_i * \hsE_i [-2])^{P_i}
\end{eqnarray*}
By Lemma \ref{th:EEhat}, we have that $ \sH^*(\hsE_i * \hsE_i) \cong \hsE_i^{(2)} \otimes_\C H^\star(\p^1)$.  Since there are no negative Exts from $\hsE_i^{(2)} $ to itself, the spectral sequence for computing $ \Ext^l(\hsE_i * \hsE_i, \hsE_i * \hsE_i[-2])^{P_i} $ collapses and we deduce that
\begin{equation*}
\Ext^l(\hsE_i * \hsE_i, \hsE_i * \hsE_i[-2])^{P_i} \cong \Hom(\hsE_i^{(2)}, \hsE_i^{(2)})^{P_i}
\end{equation*}
if $l=0$ and zero if $l < 0$. Now $\Hom(\hsE_i^{(2)}, \hsE_i^{(2)})^{P_i} \cong \Hom(\sE_i^{(2)}, \sE_i^{(2)}) \cong H^0(\O_{\B^{(2)}_i}) \cong \C $. The last isomorphism follows for the same reason $H^0(\O_{\M}) \cong \C$, namely the $\C^\times$ action retracts $\B^{(2)}_i$ onto a proper subvariety. 
\end{proof}

Let $$ \hc \in \Hom(\hsE_i * \hsF_i, \hsF_i * \hsE_i)^{P_i} \text{ and } \tc \in \Hom(\tsE_i * \tsF_i, \tsF_i * \tsE_i) $$ denote the unique (up to scalar) non-zero elements. 

\begin{Remark} Note that on $\hM_i$ we work $P_i$ equivariantly. On $\tM_i$ we do not use the $P_i$ action, but we still have the $\C^\times$ action, which is always around and which forces all 
$\Hom$ spaces to be finite dimensional. When we have products $\hM_i \times \tM_i$, we consider the first factor $\hM_i$ to have the usual $P_i$ action and the second factor $\tM_i$ to have a trivial $P_i$ action. 
\end{Remark}

By Lemma \ref{th:compconv}, $ (\id \times j_{\l})_* (\hsE_i * \hsF_i) \cong (j_{\l} \times \id)^* ( \tsE_i * \tsF_i) $ and $(\id \times j_\l)_* (\hsF_i * \hsE_i) \cong (j_\l \times \id)^* ( \tsF_i * \tsE_i) $.

\begin{Lemma}
$(\id \times j_\l)_* (\hc)$  and $(j_\l \times \id)^* (\tc)$ are equal (up to a non-zero multiple) under the above isomorphisms. 
\end{Lemma}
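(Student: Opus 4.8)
The plan is to prove the two morphisms are proportional by exhibiting a common one-dimensional Hom-space in which both lie, showing that $(\id \times j_\l)_*(\hc)$ is a nonzero generator, and then arguing that $(j_\l \times \id)^*(\tc)$ is also a nonzero element of that line.

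First I would record that, applying Lemma \ref{th:compconv} to the compatible pairs $(\hsE_i,\tsE_i)$ and $(\hsF_i,\tsF_i)$ of Corollary \ref{th:EFcomp}, convolution produces the two compatibility isomorphisms displayed just before the statement. Consequently, once the codomain of the first morphism and the domain/codomain of the second are identified via those isomorphisms, both $(\id \times j_\l)_*(\hc)$ and $(j_\l \times \id)^*(\tc)$ become elements of
$$V := \Hom_{D(\hM_i(\l) \times \tM_i(\l))}\big((\id \times j_\l)_*(\hsE_i * \hsF_i),\, (\id \times j_\l)_*(\hsF_i * \hsE_i)\big)^{P_i}.$$
It therefore suffices to show that $\dim V \le 1$ and that both morphisms are nonzero.

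Next I would compute $V$. Factoring $j_\l$ as the closed embedding $\hM_i(\l) \hookrightarrow \tM^\circ_i(\l)$ followed by the open embedding $\tM^\circ_i(\l) \hookrightarrow \tM_i(\l)$ of diagram \eqref{eq:helpful}, the functor $(\id \times j_\l)_*$ is a composite of fully faithful pushforwards, hence fully faithful; moreover it respects the $P_i$-equivariant structures in the conventions of the Remark above. Together with Lemma \ref{lem:EF-FEmaps} this yields $V \cong \Hom^{P_i}(\hsE_i * \hsF_i, \hsF_i * \hsE_i) \cong \C$, and since $\hc \ne 0$ and the functor is faithful, $(\id \times j_\l)_*(\hc)$ is a nonzero generator. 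Thus $(j_\l \times \id)^*(\tc) = c \cdot (\id \times j_\l)_*(\hc)$ for some scalar $c$, and the whole statement reduces to showing $(j_\l \times \id)^*(\tc) \ne 0$.

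The hard part is exactly this nonvanishing, which I would handle by a support argument. Let $\tsP$ be the cone of $\tc$; by Proposition \ref{prop:sl2tilde}(2) its cohomology is $\O_\Delta \otimes H^\star(\p^{\bullet})$, so $\tsP$ is supported on the diagonal $\Delta_{\tM_i(\l)}$, whence $(j_\l \times \id)^*\tsP$ is supported on $(j_\l \times \id)^{-1}(\Delta_{\tM_i(\l)})$, the graph of $j_\l$. If $(j_\l \times \id)^*(\tc)$ vanished, then applying the exact functor $(j_\l \times \id)^*$ to the defining triangle of $\tsP$ and using that a triangle one of whose maps vanishes splits would exhibit $(j_\l \times \id)^*(\tsF_i * \tsE_i) \cong (\id \times j_\l)_*(\hsF_i * \hsE_i)$ as a direct summand of $(j_\l \times \id)^*\tsP$. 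But this object is a nonzero sheaf whose support is the image of the composite Hecke correspondence $\supp(\hsF_i * \hsE_i)$; this contains off-diagonal pairs $(x,x')$ with $x \ne x'$, hence $j_\l(x)\ne j_\l(x')$, and so is not contained in the graph of $j_\l$ — contradicting its being a summand of something supported on that graph. Hence $c \ne 0$. The one subtlety to watch here is that the support comparison uses the transversality of the relevant intersections (the lemma of Nakajima \cite{Nak00} quoted above), which guarantees that the restricted kernels are the honest structure sheaves of $\hB^{(r)}_i(\l)$ with no spurious higher Tor, and that the composite correspondence genuinely escapes the graph of $j_\l$.
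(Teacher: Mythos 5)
Your overall skeleton is the same as the paper's: identify a common Hom-space, show it is one-dimensional, and show both maps are nonzero (your support-on-the-diagonal argument for $(j_\l \times \id)^*(\tc) \ne 0$ is essentially word-for-word the paper's argument, and it is correct). The gap is in how you compute the Hom-space and how you get nonvanishing of $(\id \times j_\l)_*(\hc)$: you assert that $(\id \times j_\l)_*$ is ``a composite of fully faithful pushforwards, hence fully faithful.'' That is false. The map $j_\l$ factors (see (\ref{eq:helpful})) as the \emph{closed} embedding $\iota_1 : \hM_i(\l) \hookrightarrow \tM_i^\circ(\l)$ followed by an open embedding; the open pushforward is fully faithful, but derived pushforward along a closed embedding is not. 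By adjunction $\Hom(\iota_{1*}A, \iota_{1*}B) \cong \Hom(\iota_1^*\iota_{1*}A, B)$, and $\iota_1^*\iota_{1*}A$ is not $A$: it carries a Koszul complex with extra terms in negative cohomological degrees. Already for a point in a line, $\Hom_{D(\bA^1)}(\O_0, \O_0[1]) = \Ext^1_{\bA^1}(\O_0,\O_0) = \C$ while the corresponding group on the point vanishes, so fullness fails; faithfulness is likewise not a formal property of such pushforwards. Hence neither $\dim V \le 1$ nor $(\id \times j_\l)_*(\hc) \ne 0$ follows from your argument as written.

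The fix is exactly the extra work the paper does, and it shows that Lemma \ref{lem:EF-FEmaps} is not an auxiliary ingredient sitting alongside full faithfulness --- it is the substitute for it. One rewrites, using $\iota_2^*\iota_{2*} = \id$ for the open part and adjunction for the closed part,
$$V \cong \Hom\big((\id \times \iota_1)^*(\id \times \iota_1)_*(\hsE_i * \hsF_i),\, \hsF_i * \hsE_i\big)^{P_i},$$
then uses that $\iota_1$ is the inclusion of a \emph{fibre} of the map $\tM_i^\circ(\l) \rightarrow \oplus_{l \ne i}\gl(V_l)$, so the conormal bundle is trivial and $\sH^k\big((\id \times \iota_1)^*(\id \times \iota_1)_* \hsE_i\big) \cong \hsE_i^{\oplus a_k}$ with $a_0 = 1$ and $a_k = 0$ for $k > 0$. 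The resulting spectral sequence produces a priori many extra contributions, but they all live in \emph{negative} Ext degrees, which vanish by Lemma \ref{lem:EF-FEmaps}; this is what gives $V \cong \Hom(\hsE_i * \hsF_i, \hsF_i * \hsE_i)^{P_i} \cong \C$. The same vanishing, together with the fact that the counit $f_1 : (\id \times \iota_1)^*(\id \times \iota_1)_*\hsE_i \rightarrow \hsE_i$ is the identity on $\sH^0$, is what shows that the composite $f_2 \circ f_1$ (whose adjoint is $(\id \times j_\l)_*(\hc)$) is nonzero. With those two steps repaired, your proposal becomes the paper's proof.
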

\begin{proof}
We will show that $\Hom((\id \times j_{\l})_* (\hsE_i * \hsF_i), (\id \times j_{\l})_* (\hsF_i * \hsE_i))^{P_i} = \C$ and that $(\id \times j_\l)_* (\hc)$ and $(j_\l \times \id)^* (\tc)$ are non-zero. 

Recall (\ref{eq:helpful}) where $j_\l$ is described as the composition $\hM_i(\l) \xrightarrow{\iota_1} \tM_i^{\circ}(\l) \xrightarrow{\iota_2} \tM_i(\l)$. Since $\iota_2$ is an open embedding $\iota_2^* \iota_{2*} = \id$. Hence 
\begin{align*}
\Hom((\id \times j_{\l})_* (\hsE_i * \hsF_i), (\id \times j_{\l})_* (\hsF_i * \hsE_i))^{P_i} 
&\cong \Hom((\id \times \iota_1)^* (\id \times \iota_1)_* \hsE_i * \hsF_i, \hsF_i * \hsE_i)^{P_i} \\
&\cong \Hom((\id \times \iota_1)^* (\id \times \iota_1)_* \hsE_i, \hsF_i * \hsE_i * (\hsF_i)_L)^{P_i}
\end{align*}
Now $\iota_1: \hM_i(\l) \hookrightarrow \tM_i^{\circ}(\l)$ is the inclusion of a fibre. Thus, keeping in mind $\hsE_i$ is a sheaf, 
$$\sH^k((\id \times \iota_1)^* (\id \times \iota_1)_* \hsE_i) = \hsE_i^{\oplus a_k}$$
for some $a_k \in \mathbb{Z}^{\ge 0}$ where $a_0 = 1$ and $a_k = 0 $ for $k > 0$. Thus by Lemma \ref{lem:EF-FEmaps} we get that 
\begin{eqnarray*}
\Hom((\id \times \iota_1)^* (\id \times \iota_1)_* \hsE_i, \hsF_i * \hsE_i * (\hsF_i)_L)^{P_i} 
&\cong& \bigoplus_{k \le 0} \Hom(\hsE_i^{\oplus a_k} [-k], \hsF_i * \hsE_i * (\hsF_i)_L)^{P_i} \\
&\cong& \bigoplus_{k \le 0} \Ext^{k}(\hsE_i^{\oplus a_k} * \hsF_i, \hsF_i * \hsE_i)^{P_i} \\
&\cong& \Hom(\hsE_i * \hsF_i, \hsF_i * \hsE_i)^{P_i} \cong \C.
\end{eqnarray*}

It remains to show that $(\id \times j_\l)_* (\hc)$ and $(j_\l \times \id)^* (\tc)$ are non-zero. The map $(\id \times j_\l)_* \hc$ is adjoint to the composition map 
$$(\id \times j_\l)^* (\id \times j_\l)_* \hsE_i \cong (\id \times \iota_1)^* (\id \times \iota_1)_* \hsE_i \xrightarrow{f_1} \hsE_i \xrightarrow{f_2} \hsF_i * \hsE_i * (\hsF_i)_L$$
where $f_2$ is the adjoint to $\hc$ (and hence non-zero). Now, by the above, 
$$\Ext^l((\id \times j_\l)^* (\id \times j_\l)_* \hsE_i, \hsF_i * \hsE_i * (\hsF_i)_L)^{P_i} = 0$$
if $l < 0$. Since $f_1$ is the identity on $\sH^0$ this means $f_2 \circ f_1 \ne 0$ since $f_2 \ne 0$. Thus $(\id \times j_\l)_* (\hc) \ne 0$. 

To show $(j_\l \times \id)^* (\tc) \ne 0$, consider the exact triangle 
$$\tsE_i * \tsF_i \xrightarrow{\tc} \tsF_i * \tsE_i \rightarrow \Cone(\tc).$$
By Proposition \ref{prop:sl2tilde} we have $\Cone(\tc) \cong \sP$ where $\sP$ is supported on the diagonal. Applying $(j_\l \times \id)^*$ we get the exact triangle
$$(j_\l \times \id)^* (\tsE_i * \tsF_i) \xrightarrow{(j_\l \times \id)^* (\tc)} (j_\l \times \id)^* (\tsF_i * \tsE_i) \rightarrow (j_\l \times \id)^* \sP.$$
Now $(j_\l \times \id)^* \sP$ is still supported on the diagonal whereas the other two terms are not. Thus $(j_\l \times \id)^* (\tc)$ cannot be zero. 
\end{proof}

Now we are in position to establish condition (\ref{co:EF}).

\begin{Theorem} \label{th:EFFEtriangle}
If $\la \l, \alpha_i \ra \le 0$ there exists a distinguished triangle
\begin{equation*}
\hsE_i(\l-\alpha_i) * \hsF_i(\l-\alpha_i) \rightarrow \hsF_i(\l) * \hsE_i(\l) \rightarrow \sP
\end{equation*}
where $ \sH^*(\sP) \cong \O_{\Delta} \otimes_\C H^\star(\p^{- \la \l, \alpha_i \ra -1})$ (and similarly if $\la \l, \alpha_i \ra \ge 0$).
\end{Theorem}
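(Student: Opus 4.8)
The plan is to produce the triangle tautologically and then identify its third term using the already-established $\sl_2$ statement on the tilde varieties. First I would set
$$\sP := \Cone\left( \hsE_i(\l-\alpha_i) * \hsF_i(\l-\alpha_i) \xrightarrow{\hc} \hsF_i(\l) * \hsE_i(\l) \right),$$
so that the required distinguished triangle holds by the very definition of the cone. All of the content then lies in computing $\sH^*(\sP)$ and showing it is $\O_\Delta \otimes_\C H^\star(\p^{-\la \l, \alpha_i \ra - 1})$.

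To access this, I would transfer the cone to the tilde side, where the answer is known. The functors $(\id \times j_\l)_*$ and $(j_\l \times \id)^*$ are triangulated, hence commute with cones. By Corollary \ref{th:EFcomp} together with Lemma \ref{th:compconv}, the composites $\hsE_i * \hsF_i$ and $\hsF_i * \hsE_i$ are compatible with $\tsE_i * \tsF_i$ and $\tsF_i * \tsE_i$ respectively, and by the preceding lemma the morphism $(\id \times j_\l)_* \hc$ agrees, up to a non-zero scalar, with $(j_\l \times \id)^* \tc$ under these identifications. Since rescaling a morphism by a unit does not change its cone, I obtain
$$(\id \times j_\l)_* \sP \cong \Cone\left( (j_\l \times \id)^* \tc \right) \cong (j_\l \times \id)^* \Cone(\tc).$$

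Next I would identify $\Cone(\tc)$. By Proposition \ref{prop:sl2tilde}(2), when $\la \l, \alpha_i \ra \le 0$ we have $\Cone(\tc) \cong \tsP$ with $\sH^*(\tsP) \cong \O_{\Delta_{\tM_i}} \otimes_\C H^\star(\p^{-\la \l, \alpha_i \ra - 1})$, supported on the diagonal. Restricting along $(j_\l \times \id)$ and running the spectral sequence for the pullback, I would argue that the only contribution is governed by $(j_\l \times \id)^* \O_{\Delta_{\tM_i}}$. The key geometric input here is Nakajima's transversality (the diagonal meets $\hM_i(\l) \times \tM_i(\l)$ transversally, exactly as used to establish $\O_\Delta$-compatibility in section \ref{se:compatible}), which guarantees that this pullback is a sheaf with no higher Tor and equals $(\id \times j_\l)_* \O_{\Delta_{\hM_i}}$. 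Hence the spectral sequence degenerates and
$$\sH^*\big( (\id \times j_\l)_* \sP \big) \cong (\id \times j_\l)_* \O_{\Delta_{\hM_i}} \otimes_\C H^\star(\p^{-\la \l, \alpha_i \ra - 1}).$$
Finally, applying Lemma \ref{th:rightinverse} with $j = \id \times j_\l$, I would descend this isomorphism from $\hM_i(\l) \times \tM_i(\l)$ back to $\hM_i(\l) \times \hM_i(\l)$, concluding $\sH^*(\sP) \cong \O_{\Delta_{\hM_i}} \otimes_\C H^\star(\p^{-\la \l, \alpha_i \ra - 1})$. The case $\la \l, \alpha_i \ra \ge 0$ is symmetric, exchanging the roles of $\hsE_i$ and $\hsF_i$.

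The step I expect to be the main obstacle is the cohomology computation of $(j_\l \times \id)^* \Cone(\tc)$: one must verify that restricting the diagonal-supported complex $\tsP$ introduces no spurious Tor sheaves and yields exactly $(\id \times j_\l)_* \O_{\Delta_{\hM_i}}$ tensored with $H^\star(\p^{-\la \l, \alpha_i \ra - 1})$. This is precisely where the transversality of the Hecke intersections is indispensable, and where care with the cohomological and equivariant shifts packaged into $H^\star(\p^{\bullet})$ is required before invoking Lemma \ref{th:rightinverse}.
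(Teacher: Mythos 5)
Your proposal is correct and follows essentially the same route as the paper's proof: define $\sP = \Cone(\hc)$, use the compatibility results and the identification of $(\id \times j_\l)_*(\hc)$ with $(j_\l \times \id)^*(\tc)$ up to a non-zero scalar to get $(\id \times j_\l)_* \Cone(\hc) \cong (j_\l \times \id)^* \Cone(\tc)$, identify $\sH^*(\Cone(\tc))$ via Proposition \ref{prop:sl2tilde}, and descend with Lemma \ref{th:rightinverse}. The only cosmetic point is that the vanishing of higher Tor for $(j_\l \times \id)^* \O_{\Delta_{\tM_i}}$ is not an instance of Nakajima's transversality lemma (which concerns the Hecke correspondences $\tB^{(r)}_i$) but rather the elementary fact recorded in section \ref{se:compatible} that the diagonal $\Delta_{\tM_i}$ automatically meets $\hM_i(\l) \times \tM_i(\l)$ transversally, giving the compatibility of $\O_{\Delta_{\hM_i}}$ with $\O_{\Delta_{\tM_i}}$.
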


\begin{proof}
Consider the exact triangle 
\begin{equation}\label{eq:exact}
\hsE_i * \hsF_i \xrightarrow{\hc} \hsF_i * \hsE_i \rightarrow \Cone(\hc).
\end{equation}
Since $(\id \times j_\l)_*(\hc) = (j_\l \times \id)^*(\tc)$ (up to a non-zero multiple), we see that $(\id \times j_\l)_* \Cone(\hc) \cong (j_\l \times \id)^* \Cone(\tc)$.  

From Proposition \ref{prop:sl2tilde} we see that $ \sH^*(\Cone(\tc)) \cong \O_{\Delta} \otimes_\C H^\star(\p^{- \la \l, \alpha_i \ra -1})$.  Hence 
\begin{equation*}
\sH^*((\id \times j_\l)_* \Cone(\hc)) \cong (j_\l \times \id)^* (\O_{\Delta} \otimes_\C H^\star(\p^{-\la \l, \alpha_i \ra -1})) \cong (\id \times j_\l)_* (\O_{\Delta} \otimes_\C H^\star(\p^{-\la \l, \alpha_i \ra -1}))
\end{equation*}
and thus by Lemma \ref{th:rightinverse}, $ \sH^*(\Cone(\hc)) \cong \O_{\Delta} \otimes_\C H^\star(\p^{-\la \l, \alpha_i \ra -1})$.  
\end{proof}

\begin{Proposition}
The distinguished triangle of Theorem \ref{th:EFFEtriangle} splits.  Thus condition (\ref{co:EF}) holds.
\end{Proposition}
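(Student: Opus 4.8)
The plan is to show the distinguished triangle
$$ A \to \hsF_i(\l)*\hsE_i(\l) \to \sP \xrightarrow{\delta} A[1], \qquad A := \hsE_i(\l-\alpha_i)*\hsF_i(\l-\alpha_i), $$
splits, which is equivalent to the vanishing of the connecting morphism $\delta \in \Hom(\sP, A[1])$, since a distinguished triangle splits if and only if its connecting map is zero. So the whole problem reduces to proving $\Hom(\sP, A[1]) = 0$. Throughout I would work $P_i$-equivariantly on $\hM_i$, where $\hsE_i, \hsF_i$ obey the same adjunctions as $\sE_i, \sF_i$ (Lemma \ref{lem:EFadj}); equivalently one may descend along the $P_i$-bundle and argue with $\sE_i, \sF_i$ on $\M$ directly.

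First I would reduce to the cohomology sheaves of $\sP$. By Theorem \ref{th:EFFEtriangle} we have $\sH^*(\sP) \cong \O_\Delta \otimes_\C H^\star(\p^n)$ with $n = -\la\l,\alpha_i\ra - 1 \ge 0$ (the case $n=-1$ being trivial, as then $\sP = 0$). Filtering $\sP$ by its canonical truncation filtration, the graded pieces are $\O_\Delta\{i\}[-i]$ for $i \in \{-n, -n+2, \dots, n\}$. Applying $\Hom(-, A[1])$ to the truncation triangles and inducting on $i$, it suffices to prove the single vanishing $\Hom(\O_\Delta\{i\}[-i], A[1]) = 0$ for each such $i$; note that no formality of $\sP$ is required, only its cohomology sheaves.

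The heart of the matter is this last vanishing, which I would establish by the convolution–adjunction manipulation already used in Lemma \ref{lem:EF-FEmaps}. Stripping the left factor $\hsE_i(\l-\alpha_i)$ off $A$ via its left adjoint kernel gives
$$ \Hom(\O_\Delta\{i\}[-i], A[1]) \cong \Hom\big(\hsE_i(\l-\alpha_i)_L,\ \hsF_i(\l-\alpha_i)[1+i]\{-i\}\big). $$
By Lemma \ref{lem:EFadj}(ii), applied with $r=1$ at weight $\l-\alpha_i$ and using $\la\l-\alpha_i,\alpha_i\ra + 1 = \la\l,\alpha_i\ra - 1 = -n-2$, we have $\hsE_i(\l-\alpha_i)_L \cong \hsF_i(\l-\alpha_i)[n+2]\{-n-2\}$, so the right-hand side becomes $\Ext^{\,i-n-1}(\hsF_i, \hsF_i)$ in a fixed equivariant weight. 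Since $\hsF_i$ is a sheaf (condition (\ref{co:sheaves})), its negative self-$\Ext$ groups vanish, and because $i \le n$ the cohomological degree $i-n-1 \le -1$ is strictly negative. Hence every graded-piece contribution vanishes, so $\Hom(\sP, A[1]) = 0$ and $\delta = 0$, giving condition (\ref{co:EF}). The case $\la\l,\alpha_i\ra \ge 0$ is entirely symmetric.

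The main obstacle is the grading bookkeeping: the argument works precisely because the length $n$ of the complex $\sP$ (its top cohomological degree) is exactly cancelled by the adjunction shift $[n+2]$, pushing every relevant $\Ext$ group strictly below degree zero, where sheafness of $\hsF_i$ forces vanishing. One must track the equivariant shift $\{-n-2\}$ in tandem with the homological one so as not to mis-locate the degree, and one should note that the entire computation is carried out $P_i$-equivariantly, which is harmless: taking $P_i$-invariants is exact in characteristic zero and preserves the vanishing of negative $\Ext$ groups.
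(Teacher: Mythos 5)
Your proposal is correct and is essentially the paper's own argument: the paper's proof is the one-line statement that, by adjunction, $\Ext^1(\O_{\Delta} \otimes_\C H^\star(\p^{-\la \l, \alpha_i \ra -1}), \hsE_i * \hsF_i) = 0$, which is exactly the vanishing you establish. Your write-up simply fills in the details the paper leaves implicit --- the reduction to cohomology sheaves via the truncation filtration, and the explicit use of Lemma \ref{lem:EFadj} to land in negative self-$\Ext$s of the sheaf $\hsF_i$ --- and your shift bookkeeping checks out.
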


\begin{proof}
By adjunction, $\Ext^1(\O_{\Delta} \otimes_\C H^\star(\p^{-\la \l, \alpha_i \ra -1}), \hsE_i * \hsF_i) = 0$, and thus the triangle splits. 
\end{proof}

\subsection{Proof of relation (\ref{co:EEdef})}

Now we will prove relation (\ref{co:EEdef}), which is the deformed version of relation (\ref{co:EE}).

\begin{Lemma}\label{lem:comp} 
Suppose $\sP \in D(X_1 \times X_2)$ and $\tsP \in D(\tX_1 \times \tX_2)$ are compatible.  Let  $\iota: X_2 \hookrightarrow Y_2$ and $\tilde{\iota}: \tX_2 \hookrightarrow \tY_2$ be closed immersions such that $j_{Y_2} \circ \iota = \tilde{\iota} \circ j_{X_2}: X_2 \rightarrow \tY_2$, with $j_{Y_2}: Y_2 \rightarrow \tY_2$ a locally closed immersion. Then 
$$(\id \times \iota)_* \sP \in D(X_1 \times Y_2) \text{ and } (\id \times \tilde{\iota})_* \tsP \in D(\tX_1 \times \tY_2)$$
are compatible. Similarly,  
$$(\iota \times \id)_* \sP \in D(Y_1 \times X_2) \text{ and } (\tilde{\iota} \times \id)_* \tsP \in D(\tY_1 \times \tX_2)$$
are compatible.
\end{Lemma}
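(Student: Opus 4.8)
The plan is to verify the defining isomorphism of compatibility directly, reducing the whole statement to a single base-change assertion. Consider the first claim. By definition of compatibility (with respect to $j_{X_1}\colon X_1 \hookrightarrow \tX_1$ and $j_{Y_2}\colon Y_2 \hookrightarrow \tY_2$) I must produce an isomorphism
\[
(\id \times j_{Y_2})_* (\id \times \iota)_* \sP \cong (j_{X_1} \times \id)^* (\id \times \tilde\iota)_* \tsP \in D(X_1 \times \tY_2).
\]
First I would rewrite the left-hand side using functoriality of pushforward together with the hypothesis $j_{Y_2}\circ\iota = \tilde\iota\circ j_{X_2}$, and then apply the compatibility of $\sP$ and $\tsP$:
\[
(\id \times j_{Y_2})_* (\id \times \iota)_* \sP \cong (\id \times \tilde\iota)_* (\id \times j_{X_2})_* \sP \cong (\id \times \tilde\iota)_* (j_{X_1} \times \id)^* \tsP .
\]
Comparing with the right-hand side, the entire claim collapses to the base-change isomorphism $(\id \times \tilde\iota)_* (j_{X_1} \times \id)^* \cong (j_{X_1} \times \id)^* (\id \times \tilde\iota)_*$ applied to $\tsP$.

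To establish this, I would observe that the relevant square
\[
\begin{CD}
X_1 \times \tX_2 @>{j_{X_1} \times \id}>> \tX_1 \times \tX_2 \\
@V{\id \times \tilde\iota}VV @VV{\id \times \tilde\iota}V \\
X_1 \times \tY_2 @>{j_{X_1} \times \id}>> \tX_1 \times \tY_2
\end{CD}
\]
is Cartesian and that its two maps act on complementary factors (the locally closed embedding $j_{X_1}$ on the first factor, the closed immersion $\tilde\iota$ on the second). Hence it is automatically Tor-independent: working locally and resolving by external tensor products, the relevant $\mathrm{Tor}$ over $\O_{\tX_1}\otimes_\C\O_{\tY_2}$ splits off a factor $\mathrm{Tor}^{\O_{\tX_1}}_{>0}(\O_{\tX_1},-)$, which vanishes since $\O_{\tX_1}$ is free over itself. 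Derived base change for a Cartesian, Tor-independent square then supplies the required isomorphism; the locally closed nature of $j_{X_1}$ causes no trouble precisely because of this freeness, and boundedness of all the objects involved is guaranteed by compatibility, as noted in the Remark following the definition.

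For the second assertion I would argue symmetrically, now pushing forward along the first factor via $\iota\colon X_1 \hookrightarrow Y_1$ and $\tilde\iota\colon \tX_1 \hookrightarrow \tY_1$ with $j_{Y_1}\circ\iota = \tilde\iota\circ j_{X_1}$. The identical chain of identities reduces the claim to the base change $(\iota \times \id)_* (j_{X_1} \times \id)^* \cong (j_{Y_1} \times \id)^* (\tilde\iota \times \id)_*$, i.e.\ to base change along
\[
\begin{CD}
X_1 @>{j_{X_1}}>> \tX_1 \\
@V{\iota}VV @VV{\tilde\iota}V \\
Y_1 @>{j_{Y_1}}>> \tY_1
\end{CD}
\]
(times $\tX_2$). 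The crucial difference — and the step I would be most careful about — is that here both functors act on the \emph{same} factor, so this is no longer a product square and Tor-independence becomes a genuine input rather than a formality. The main obstacle is thus to verify that this square is Cartesian and Tor-independent; in our situation $X_1$ is the central fibre of the flat deformation $Y_1 \to \bA^1$ cut out by the (non-zero-divisor) deformation parameter, so $X_1 = Y_1 \times_{\tY_1} \tX_1$ with the parameter remaining a non-zero-divisor on $Y_1$, giving exactly the Cartesian, Tor-independent square needed. Granting this, the same computation as before yields the stated compatibility.
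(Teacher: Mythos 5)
Your treatment of the first assertion coincides with the paper's proof: the same chain of three isomorphisms
$$(\id \times j_{Y_2})_* (\id \times \iota)_* \sP \cong (\id \times \tilde{\iota})_* (\id \times j_{X_2})_* \sP \cong (\id \times \tilde{\iota})_* (j_{X_1} \times \id)^* \tsP \cong (j_{X_1} \times \id)^* (\id \times \tilde{\iota})_* \tsP,$$
with the last step justified by base change around the fibre square whose horizontal maps are $j_{X_1} \times \id$ and whose vertical maps are $\id \times \tilde{\iota}$. The paper justifies that step by remarking that $\tX_1 \times \tX_2$ and $X_1 \times \tY_2$ intersect transversely inside $\tX_1 \times \tY_2$; your K\"unneth-type argument (the two ideals live on complementary factors, so all higher Tors vanish) is a more explicit version of exactly the same point.

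Where you genuinely depart from the paper is the second assertion, and there your caution is warranted. The paper says only that it ``follows similarly, using $\iota: X_1 \hookrightarrow Y_1$ and $\tilde{\iota}: \tX_1 \hookrightarrow \tY_1$,'' but, as you observe, it does not follow formally: since compatibility pushes forward along the second factor and pulls back along the first, the required base change now takes place in a square all of whose maps live on the first factor, and for such a square Cartesianness and Tor-independence are genuine hypotheses rather than consequences of the product structure. They cannot be dispensed with: take $X_1 = \tX_1 = Y_1 = X_2 = \tX_2 = \mathrm{pt}$ and $\tY_1 = \bA^1$, with $j_{Y_1}$ and $\tilde{\iota}$ both the inclusion of the origin, $\iota$, $j_{X_1}$, $j_{X_2}$ identities, and $\sP = \tsP = \O_{\mathrm{pt}}$; then $(\id \times j_{X_2})_*(\iota \times \id)_* \sP \cong \C$ while $(j_{Y_1} \times \id)^*(\tilde{\iota} \times \id)_* \tsP \cong \C \oplus \C[1]$ by the Koszul resolution of the origin, so the conclusion fails even though every stated hypothesis of the Lemma holds. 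Your repair --- that in the situation where the Lemma is applied (Corollary \ref{cor:defcomp}) the subvariety $\tX_1 = \tM_i(\l) \subset \tN_i(\l) = \tY_1$ is cut out by the single deformation parameter $\mu_i$, which remains a non-zero-divisor on $Y_1 = \hN_i(\l)$ by flatness of $\hN_i(\l) \rightarrow \bA^1$, so that the square is Cartesian and Tor-independent --- is precisely the missing input, and it is what the paper implicitly relies on. The one thing to flag is that what you prove is the Lemma with this Cartesian-and-Tor-independent hypothesis added to the second assertion; that is unavoidable, since the second assertion as literally stated is false, and the added hypothesis does hold in the application.
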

\begin{proof}
We have 
\begin{eqnarray*}
(\id \times j_{Y_2})_* (\id \times \iota)_* \sP 
&\cong& (\id \times \tilde{\iota})_* (\id \times j_{X_2})_* \sP \\
&\cong& (\id \times \tilde{\iota})_* (j_{X_1} \times \id)^* \tsP \\
&\cong& (j_{X_1} \times \id)^* (\id \times \tilde{\iota})_* \tsP
\end{eqnarray*}
where the second isomorphism follows since $\sP$ and $\tsP$ are compatible and the third isomorphism is a consequence of the following fibre square where $\tX_1 \times \tX_2$ and $X_1 \times \tY_2$ intersect transversely inside $\tX_1 \times \tY_2$. 
\begin{equation*}
\xymatrix{
X_1 \times \tX_2 \ar[rr]_{j_{X_1} \times \id} \ar[d]_{\id \times \tilde{\iota}} & & \tX_1 \times \tX_2 \ar[d]^{\id \times \tilde{\iota}} \\
X_1 \times \tY_2 \ar[rr]^{j_{X_1} \times \id} & & \tX_1 \times \tY_2 
}
\end{equation*}

This proves the first assertion. The second assertion follows similarly, using $\iota: X_1 \hookrightarrow Y_1$ and $\tilde{\iota}: \tX_1 \hookrightarrow \tY_1$.
\end{proof}

Abusing notation slightly we denote by $\ti: \tM_i(\l) \hookrightarrow \tN_i(\l)$ and $\hi: \hM_i(\l) \hookrightarrow \hN_i(\l)$ the natural inclusions for any weight $\l$. 

\begin{Corollary}\label{cor:defcomp}
The objects 
$$ (\id \times \hi)_* \hsE_i \in D(\hM_i(\l) \times \hN_i(\l+\alpha_i)) \text{ and }
(\id \times \ti)_* \tsE_i \in D(\tM_i(\l) \times \tN_i(\l+\alpha_i))$$
are compatible, as are the objects 
$$(\hi \times \id)_* \hsE_i \in D(\hN_i(\l) \times \tM_i(\l+\alpha_i)) \text{ and }
(\ti \times \id)_* \tsE_i \in D(\tN_i(\l) \times \tM_i(\l+\alpha_i)).$$
\end{Corollary}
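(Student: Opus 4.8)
The plan is to obtain both statements as immediate applications of Lemma \ref{lem:comp}, taking as input the compatibility of $\hsE_i$ and $\tsE_i$ established in Corollary \ref{th:EFcomp}. Recall that $\hi: \hM_i(\l) \hookrightarrow \hN_i(\l)$ and $\ti: \tM_i(\l) \hookrightarrow \tN_i(\l)$ are the inclusions of the fibres over $0 \in \mathbb{A}^1$, hence closed immersions, which is precisely the type of map ($\iota, \tilde{\iota}$) required in Lemma \ref{lem:comp}.

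First I would assemble the geometric data. In addition to the locally closed embedding $j_\l: \hM_i(\l) \hookrightarrow \tM_i(\l)$, recall that $\hN_i(\l)$ is a locally closed subvariety of $\tN_i(\l)$; denote this embedding $J_\l: \hN_i(\l) \hookrightarrow \tN_i(\l)$. The single point to check is that, for every weight, the square
\begin{equation*}
\xymatrix{
\hM_i(\l) \ar[r]^{\hi} \ar[d]_{j_\l} & \hN_i(\l) \ar[d]^{J_\l} \\
\tM_i(\l) \ar[r]^{\ti} & \tN_i(\l)
}
\end{equation*}
commutes, i.e. $J_\l \circ \hi = \ti \circ j_\l$. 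This is exactly the hypothesis $j_{Y_2} \circ \iota = \tilde{\iota} \circ j_{X_2}$ of Lemma \ref{lem:comp}. It holds because all four arrows are the evident inclusions of subvarieties determined by imposing, in various combinations, the vanishing of the remaining moment-map components, the stability condition, and the value of the deformation parameter in $\mathbb{A}^1$; both composites coincide with the natural inclusion of $\hM_i(\l)$ into $\tN_i(\l)$.

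With this square in place, the first compatibility is the first assertion of Lemma \ref{lem:comp} applied to $\sP = \hsE_i$ and $\tsP = \tsE_i$, with the closed immersions $\iota = \hi$ and $\tilde{\iota} = \ti$ acting on the second factor (so that $X_2 = \hM_i(\l+\alpha_i)$, $Y_2 = \hN_i(\l+\alpha_i)$, and the relevant commuting square is the one above at weight $\l + \alpha_i$). The second compatibility follows in the same way from the second assertion of Lemma \ref{lem:comp}, with $\hi, \ti$ now acting on the first factor and the commuting square taken at weight $\l$.

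I do not expect a genuine obstacle here: once the compatibility of $\hsE_i, \tsE_i$ is granted, the proof is purely formal, and the only thing needing care is the bookkeeping among the four families $\hM_i, \tM_i, \hN_i, \tN_i$ and their embeddings, together with the verification that the deformation inclusions $\hi, \ti$ really are closed immersions so that Lemma \ref{lem:comp} applies verbatim.
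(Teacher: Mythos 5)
Your proposal is correct and is essentially the paper's own proof: the paper likewise deduces the corollary directly from Lemma \ref{lem:comp} together with the compatibility of $\hsE_i$ and $\tsE_i$ from Corollary \ref{th:EFcomp}. The extra bookkeeping you supply (the commuting square $J_{\l}\circ \hi = \ti \circ j_{\l}$ and the observation that $\hi,\ti$ are closed immersions of fibres over $0\in\mathbb{A}^1$) is exactly the verification the paper leaves implicit.
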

\begin{proof}
This is a direct consequence of Lemma \ref{lem:comp} and the fact that $\hsE_i$ and $\tsE_i$ are compatible (Corollary \ref{th:EFcomp}). 
\end{proof}

\begin{Lemma}
$\sH^*(i_{23*} \hsE_i * i_{12*} \hsE_i) \cong \hsE_i^{(2)}[-1] \oplus \hsE_i^{(2)}[2]$ where $i_{12}$ and $i_{23}$ are the closed immersions
\begin{align*} 
i_{12}: \hM_i(\l) \times \hM_i(\l+ \alpha_i) &\rightarrow \hM_i(\l) \times \hN_i(\l+\alpha_i) \\
i_{23}: \hM_i(\l + \alpha_i) \times \hM_i(\l + 2\alpha_i) &\rightarrow \hN_i(\l+\alpha_i) \times \hM_i(\l+2\alpha_i).
\end{align*}
\end{Lemma}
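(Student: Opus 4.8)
The plan is to transcribe the proof of Lemma \ref{th:EEhat} almost verbatim, transporting the statement from the kernels $\tsE_i$ (where it already holds, by Proposition \ref{prop:sl2tilde}(3)) to the kernels $\hsE_i$ by means of the compatibility formalism of Section \ref{se:compatible}. First I would observe that the two closed immersions in the statement are exactly the deformation push-forwards analyzed in Corollary \ref{cor:defcomp}: writing $\hi: \hM_i(\l+\alpha_i) \hookrightarrow \hN_i(\l+\alpha_i)$ and $\ti: \tM_i(\l+\alpha_i) \hookrightarrow \tN_i(\l+\alpha_i)$, we have $i_{12*}\hsE_i = (\id \times \hi)_* \hsE_i$ and $i_{23*}\hsE_i = (\hi \times \id)_* \hsE_i$, and likewise for the tilded kernels. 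Corollary \ref{cor:defcomp} then says precisely that $i_{12*}\hsE_i$ is compatible with $i_{12*}\tsE_i$ and that $i_{23*}\hsE_i$ is compatible with $i_{23*}\tsE_i$, the compatibility being taken with respect to the locally closed embeddings $\hM_i \hookrightarrow \tM_i$ and $\hN_i \hookrightarrow \tN_i$.

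Next I would invoke Lemma \ref{th:compconv}, which guarantees that compatibility is preserved under convolution. Applying it with the intermediate pair of varieties being the \emph{deformations} $\hN_i(\l+\alpha_i) \hookrightarrow \tN_i(\l+\alpha_i)$ yields that
$$ P := i_{23*}\hsE_i * i_{12*}\hsE_i \in D(\hM_i(\l) \times \hM_i(\l+2\alpha_i)) $$
is compatible with
$$ i_{23*}\tsE_i * i_{12*}\tsE_i \in D(\tM_i(\l) \times \tM_i(\l+2\alpha_i)). $$

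Now I would feed in the undeformed-to-deformed comparison exactly as in Lemma \ref{th:EEhat}. By Proposition \ref{prop:sl2tilde}(3) we have $\sH^*(i_{23*}\tsE_i * i_{12*}\tsE_i) \cong \tsE_i^{(2)}[-1] \oplus \tsE_i^{(2)}[2]$, so using compatibility of $P$ together with the compatibility of $\hsE_i^{(2)}$ and $\tsE_i^{(2)}$ recorded in Corollary \ref{th:EFcomp}, the same chain of isomorphisms as in Lemma \ref{th:EEhat} gives
$$ \sH^*\big((\id \times j_{\l+2\alpha_i})_* P\big) \cong \sH^*\big((\id \times j_{\l+2\alpha_i})_* (\hsE_i^{(2)}[-1] \oplus \hsE_i^{(2)}[2])\big). $$
Applying Lemma \ref{th:rightinverse} to strip off $(\id \times j_{\l+2\alpha_i})_*$ then yields the claimed isomorphism $\sH^*(P) \cong \hsE_i^{(2)}[-1] \oplus \hsE_i^{(2)}[2]$. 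Note that no equivariant shifts $\{1\}, \{-3\}$ appear here, in agreement with both Proposition \ref{prop:sl2tilde}(3) and the statement; these shifts enter only later, through the line bundles correcting $\hsE_i, \hsF_i$ to $\sE_i, \sF_i$.

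The only point demanding genuine care is the application of Lemma \ref{th:compconv} in the second step: the middle pair must be the deformed varieties, so one must confirm that $\hN_i(\l+\alpha_i)$ is indeed a locally closed subvariety of $\tN_i(\l+\alpha_i)$ (as recorded in Section \ref{se:sl2mod}) and that Corollary \ref{cor:defcomp} supplies compatibility with respect to exactly \emph{this} embedding rather than $\hM_i \hookrightarrow \tM_i$. Once that bookkeeping is settled, the argument is a routine reprise of Lemma \ref{th:EEhat}; I expect no essentially new obstacle, since all the geometric content — the transversality of the relevant intersections — has already been absorbed into Corollary \ref{cor:defcomp}.
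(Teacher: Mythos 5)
Your proposal is correct and takes essentially the same route as the paper: the paper's proof is the single sentence that, using Corollary \ref{cor:defcomp}, the claim follows from the analogous result for $\tsE_i$ as in the proof of Lemma \ref{th:EEhat}, which is precisely your chain --- compatibility of $i_{12*}\hsE_i$ and $i_{23*}\hsE_i$ with their tilded counterparts from Corollary \ref{cor:defcomp}, convolution-compatibility via Lemma \ref{th:compconv} with the deformed pair $\hN_i(\l+\alpha_i) \hookrightarrow \tN_i(\l+\alpha_i)$ in the middle position, then Proposition \ref{prop:sl2tilde}(iii), Corollary \ref{th:EFcomp}, and Lemma \ref{th:rightinverse} exactly as in Lemma \ref{th:EEhat}. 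Your write-up simply makes explicit the bookkeeping that the paper compresses into one line.
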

\begin{proof}
Using Corollary \ref{cor:defcomp}, this follows from the analogous result for $ \tsE_i $ as in the proof of Lemma \ref{th:EEhat}.
\end{proof}

Finally, we note that condition (\ref{co:contain}) follows easily in this case by inspection.

\section{The rank 2 relations} \label{se:rank2rel}

In this section we will prove relations (\ref{co:EiEj}) - (\ref{co:Eijdef}). These involve rank 2 subalgebras of $\g$ so we refer to them as rank 2 relations. 

The following Lemma, though not strictly necessary, will help shorten several arguments below.

\begin{Lemma}\label{lem:lagint} Suppose $Y_1, Y_2, Y_3$ are holomorphic symplectic varieties and $L_{12} \subset Y_1 \times Y_2$ and $L_{23} \subset Y_2 \times Y_3$ are smooth Lagrangian subvarieties.  If the projection map $\pi_{13}: \pi_{12}^{-1}(L_{12}) \cap \pi_{23}^{-1}(L_{23}) \rightarrow Y_1 \times Y_3$ from the scheme theoretic intersection is an isomorphism onto its image, then the intersection $\pi_{12}^{-1}(L_{12}) \cap \pi_{23}^{-1}(L_{23}) \subset Y_1 \times Y_2 \times Y_3$ is transverse.
\end{Lemma}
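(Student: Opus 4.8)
The plan is to reduce the assertion to a pointwise statement in linear symplectic algebra on tangent spaces, and then observe that, under the Lagrangian hypotheses, the isomorphism condition on $\pi_{13}$ is literally equivalent to transversality.

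First I would set up the linearization. Fix a point $p = (y_1,y_2,y_3) \in Z := \pi_{12}^{-1}(L_{12}) \cap \pi_{23}^{-1}(L_{23})$ and write $V_k = T_{y_k} Y_k$, a symplectic vector space with form $\omega_k$ and $\dim V_k = 2n_k$. Put $\Lambda_{12} = T L_{12} \subset V_1 \oplus V_2$ and $\Lambda_{23} = T L_{23} \subset V_2 \oplus V_3$; these are Lagrangian, so $\dim \Lambda_{12} = n_1 + n_2$ and $\dim \Lambda_{23} = n_2 + n_3$. The tangent spaces of the two pulled-back subvarieties at $p$ are $A = \Lambda_{12} \oplus V_3$ and $B = V_1 \oplus \Lambda_{23}$ inside $V_1 \oplus V_2 \oplus V_3$. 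Since $\pi_{12}^{-1}(L_{12})$ and $\pi_{23}^{-1}(L_{23})$ are smooth, the Zariski tangent space of the scheme-theoretic intersection is $T_p Z = A \cap B$ \emph{regardless} of transversality, and transversality at $p$ is precisely the statement $A + B = V_1 \oplus V_2 \oplus V_3$. A dimension count (using $\dim \Lambda_{12}, \dim \Lambda_{23}$) shows this is equivalent to surjectivity of the difference map
\[ \phi\colon \Lambda_{12}\oplus\Lambda_{23}\to V_2,\qquad \big((v_1,v_2),(v_2',v_3)\big)\mapsto v_2-v_2', \]
since $A \cap B \cong \ker \phi$ and the expected intersection dimension $n_1 + n_3$ is attained exactly when $\im \phi = P_{12} + P_{23} = V_2$, where $P_{12} = \operatorname{pr}_{V_2} \Lambda_{12}$ and $P_{23} = \operatorname{pr}_{V_2} \Lambda_{23}$.

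Next I would translate the hypothesis. Because $\pi_{13}|_Z$ is an isomorphism onto its image it is in particular an immersion, so its differential is injective on $T_p Z = A \cap B$. The kernel of $d\pi_{13}$ on $V_1 \oplus V_2 \oplus V_3$ is $0 \oplus V_2 \oplus 0$, so injectivity is equivalent to $(A \cap B) \cap (0 \oplus V_2 \oplus 0) = 0$, that is, to $K_{12} \cap K_{23} = 0$, where $K_{12} = \{v_2 : (0,v_2) \in \Lambda_{12}\}$ and $K_{23} = \{v_2 : (v_2,0) \in \Lambda_{23}\}$. The crux is then a single fact of symplectic linear algebra: for a Lagrangian $\Lambda \subset V \oplus V'$ one has $(0,w) \in \Lambda$ iff $\omega'(w,v') = 0$ for every $v' \in \operatorname{pr}_{V'}\Lambda$, so that $\Lambda \cap V' = (\operatorname{pr}_{V'}\Lambda)^{\perp_{\omega'}}$. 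Applying this to each factor gives $K_{12} = P_{12}^{\perp}$ and $K_{23} = P_{23}^{\perp}$ (symplectic orthogonals in $V_2$), whence $K_{12} \cap K_{23} = P_{12}^\perp \cap P_{23}^\perp = (P_{12} + P_{23})^{\perp}$. Therefore $K_{12} \cap K_{23} = 0$ iff $P_{12} + P_{23} = V_2$, which by the first step is exactly transversality at $p$. Running this argument at every $p \in Z$ (the hypothesis supplies injectivity of the differential everywhere) completes the proof.

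The main obstacle, or rather the only genuine content, is this Lagrangian duality step, which converts the kernels $K$ governing injectivity of $\pi_{13}$ into the orthogonals of the images $P$ governing transversality; all the rest is bookkeeping. The one point needing care is the identification $T_p Z = A \cap B$ for the scheme-theoretic intersection, which is what guarantees that ``isomorphism onto its image'' passes correctly to injectivity of the differential on precisely the right subspace.
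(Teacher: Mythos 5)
Your proposal is correct and takes essentially the same approach as the paper: both arguments linearize at a point of the scheme-theoretic intersection and use Lagrangian orthogonality together with non-degeneracy to show that the failure of transversality is measured exactly by the kernel of $d\pi_{13}$ on the Zariski tangent space $T_p Z = A \cap B$, which vanishes by the immersion hypothesis. The only difference is bookkeeping: the paper computes symplectic orthogonals in the ambient space, identifying $\left(T L_{12} \oplus 0\right) \cap \left(0 \oplus T L_{23}\right)$ as the obstruction, whereas you project to the middle factor $V_2$ and use the identity $K = P^{\perp}$ there --- but these two obstruction spaces are literally the same subspace of $0 \oplus V_2 \oplus 0$, so your write-up amounts to the paper's proof with the final step (which the paper dismisses as ``follows directly from the immersion hypothesis'') spelled out.
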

\begin{proof}
Let $(p_1,p_2,p_3) \in \pi_{12}^{-1}(L_{12}) \cap \pi_{23}^{-1}(L_{23})$. We need to check that the intersection 
$$\pi_{12}^{-1}(T_{(p_1,p_2)} L_{12}) \cap \pi_{23}^{-1}(T_{(p_2,p_3)} L_{23})$$
of tangent spaces is transverse. This is equivalent to showing that the dimension of this intersection is $\dim (Y_1 \times Y_2 \times Y_3) - \dim L_{12} - \dim L_{23}$. 

Notice that 
\begin{eqnarray*}
(\pi_{12}^{-1}(T_{(p_1,p_2)} L_{12}) \cap \pi_{23}^{-1}(T_{(p_2,p_3)} L_{23}))^\perp
&=& (\pi_{12}^{-1} T_{(p_1,p_2)} L_{12})^\perp + (\pi_{23}^{-1} T_{(p_2,p_3)} L_{23})^\perp \\
&=& (T_{(p_1,p_2)} L_{12} \oplus 0) + (0 \oplus T_{(p_2,p_3)} L_{23}).
\end{eqnarray*}
So it suffices to show that 
$$\dim((T_{(p_1,p_2)} L_{12} \oplus 0) + (0 \oplus T_{(p_2,p_3)} L_{23})) = \dim L_{12} + \dim L_{23}$$
or equivalently that $(T_{(p_1,p_2)} L_{12} \oplus 0) \cap (0 \oplus T_{(p_2,p_3)} L_{23}) = 0$. This follows directly from the immersion hypothesis. 
\end{proof}

\subsection{Proof of (\ref{co:EiFj})}
\begin{Theorem}
If $ i \ne j $, then $ \sF_j * \sE_i \cong \sE_i * \sF_j$.
\end{Theorem}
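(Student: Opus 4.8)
The plan is to identify both convolutions with a twisted structure sheaf of a single correspondence $Z \subset \M(\l) \times \M(\l + \alpha_i - \alpha_j)$ and then to check that the two twists coincide.

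First I would describe the supports. Writing $\sF_j * \sE_i = \pi_{13*}(\pi_{12}^* \sE_i \otimes \pi_{23}^* \sF_j)$ over the triple product $\M(\l) \times \M(\l+\alpha_i) \times \M(\l+\alpha_i-\alpha_j)$, and using that $\sE_i, \sF_j$ are line bundles on Hecke correspondences, the set underlying $\pi_{12}^{-1}(\B_i) \cap \pi_{23}^{-1}(\B_j)$ consists of triples $(V, V_1, V_2)$ with $V_1 \subset V$ a codimension-one modification at the vertex $i$ and $V_1 \subset V_2$ a codimension-one modification at $j$. In the same way $\sE_i * \sF_j$ is supported on triples $(V, V_3, V_2)$ with $V \subset V_3$ a modification at $j$ and $V_2 \subset V_3$ a modification at $i$. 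Because $i \ne j$ these two elementary modifications take place in different graded components, so setting $V_1 := V \cap V_2$ and $V_3 := V + V_2$ defines mutually inverse algebraic maps between the two loci; in particular the middle term is determined by the pair $(V, V_2)$, and both loci project isomorphically onto one common subvariety $Z \subset \M(\l) \times \M(\l+\alpha_i-\alpha_j)$.

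Next I would upgrade these set-theoretic statements. Since $\pi_{13}$ is in each case an isomorphism onto its image, and the Hecke correspondences are smooth Lagrangians in the relevant products of (holomorphic symplectic) quiver varieties, Lemma \ref{lem:lagint} shows that both triple intersections are transverse. Hence $\pi_{12}^* \sE_i \otimes \pi_{23}^* \sF_j$ carries no higher Tor sheaves and is a line bundle on the expected-dimensional intersection, and pushing forward along the closed immersion $\pi_{13}$ leaves it a line bundle on $Z$. Thus both $\sF_j * \sE_i$ and $\sE_i * \sF_j$ are of the form $\O_Z$ twisted by a determinant line bundle and a $\C^\times$-shift, and it only remains to compare these twists.

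The final, and genuinely computational, step is to match the two line bundles on $Z$. Over $Z$ the tautological bundles are identified: in the $i$-component $(V_1)_i = (V_2)_i$ lies in $V_i = (V_3)_i$ with codimension one, in the $j$-component $V_j = (V_1)_j$ lies in $(V_2)_j = (V_3)_j$ with codimension one, and at all other vertices the bundles agree; moreover the shifts $\{-v_i\}$ from $\sE_i$ and $\{v_j\}$ from $\sF_j$ agree in both orders. The only delicate contribution arises when $i$ and $j$ are joined by an edge. Then the neighbour factor $\det(V_{out(h)})^{-1}$ with $out(h)=j$ in the definition of $\sE_i$ is evaluated on $(V_1)_j$ in the first order and on $(V_3)_j = (V_2)_j$ in the second, so it changes by $\det((V_2)_j/(V_1)_j)^{-1}$. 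This is cancelled exactly by the corresponding change in $\sF_j$: the weight at which $\sF_j$ is applied differs by $\alpha_i$ between the two orders, so the exponent $\langle \cdot, \alpha_j \rangle + 1$ of the $j$-modification line $\det(V_j'/V_j) = \det((V_2)_j/(V_1)_j)$ shifts by $\langle \alpha_i, \alpha_j \rangle = -1$, contributing $\det((V_2)_j/(V_1)_j)^{+1}$. I expect this cancellation --- which hinges precisely on the Cartan pairing $\langle \alpha_i, \alpha_j \rangle$ compensating the change in the neighbour bundle --- to be the main obstacle, the preceding geometric identifications being comparatively straightforward. When $i$ and $j$ are not adjacent both effects are absent and the twists match trivially.
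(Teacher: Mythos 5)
Your proposal is correct and takes essentially the same route as the paper's proof: both identify $\sF_j * \sE_i$ and $\sE_i * \sF_j$ with a twisted structure sheaf on one common correspondence (the paper's $\B_{\bj i}(\l)$, your $Z$), obtain transversality of the triple intersections from Lemma \ref{lem:lagint} together with the fact that $\pi_{13}$ is an isomorphism onto its image, and then match the line-bundle twists. Your explicit cancellation between the neighbour factor $\det(V_{out(h)})^{-1}$ in $\sE_i$ and the weight-shifted exponent $\langle \cdot, \alpha_j \rangle + 1$ in $\sF_j$ just spells out what the paper dispatches with the remark that $i$ and $j$ play symmetric roles in $\B_{\bj i}(\l)$.
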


\begin{proof}
This proof is straight-forward since all intersections are of the expected dimension and the pushforward $\pi_{13}$ is an isomorphism onto its image. 

To compute $\sF_j * \sE_i(\l)$ we first need to identify $\pi_{12}^{-1}(\B_i) \cap \pi_{23}^{-1}(\B_j)$. 
To do this define the variety $\hB_{\bj i}(\l)$ of all triples $(B,V,S)$ with $(B,V) \in \mu^{-1}(0)^s \subset \mathrm{M}(\l-\alpha_j)$ and $S \subset V$ satisfying the following:
\begin{itemize}
\item $\dim(S) = \dim(V) - e_i - e_j$,
\item $S$ is $B$-stable
\item $\im (B_{q(k)}) \subset S_k$ for all $k \in I$
\item the induced maps $B_{\overline{h}}: V_j \rightarrow V_i/S_i$ and $B_h: V_i \rightarrow V_j/S_j$ are zero
\end{itemize}
where $h$ is the oriented edge from $i$ to $j$ in the doubled quiver and $\overline{h}$ the edge from $j$ to $i$. Let $\B_{\bj i}(\l) = \hB_{\bj i}(\l)/GL(V)$. Notice that this action is free since $GL(V)$ already acts freely on $\mu^{-1}(0)^s$.

Now consider the closed embedding $f: \B_{\bj i}(\l) \rightarrow \M(\l) \times \M(\l+\alpha_i) \times \M(\l+\alpha_i-\alpha_j)$ given by
\begin{enumerate}
\item $(B,V) := (B|_{V'},V')$ where $V'_k = V_k$ if $k \ne j$ and $W_j = S_j$
\item $(B',V') := (B|_S, S)$ 
\item $(B'', V'') := (B|_{V''}, V'')$ where $V''_k := V_k$ if $k \ne i$ and $V''_i = S_i$
\end{enumerate}
This way we can think of $\B_{\bj i}(\l)$ as a subvariety of this triple product. Now $\pi_{13*}: \B_{\bj i}(\l) \rightarrow \M(\l) \times \M(\l+\alpha_i-\alpha_j)$ is an isomorphism onto its image since $(B',V')$ can be recovered from $(B,V)$ and $S$. Thus we have a sequence of isomorphisms $\B_{\bj i}(\l) \xrightarrow{\sim} f(\B_{\bj i}(\l)) \xrightarrow{\sim} \pi_{13} \circ f(\B_{\bj i}(\l)).$

Since $\B_i$ and $\B_j$ are Lagrangian subvarieties, Lemma \ref{lem:lagint} implies that the intersection $\pi_{12}^{-1}(\B_i) \cap \pi_{23}^{-1}(\B_j)$ is of the expected dimension. It follows that 
$$\O_{\pi_{12}^{-1}(\B_i(\l))} \otimes \O_{\pi_{23}^{-1}(\B_j(\l+\alpha_i-\alpha_j))} \cong \O_{\B_{\bj i}(\l)}$$
and hence $\O_{\B_j(\l+\alpha_i-\alpha_j)} * \O_{\B_i(\l)} \cong \O_{B_{\bj i}(\l)}$. Keeping track of the line bundles of $\sE_i$ and $\sF_j$ we get:
$$\sF_j * \sE_i(\l) \cong \O_{B_{\bj i}(\l)} \otimes \det(V_i) \det(V'_i) \det(V'_j/V_j)^{\la \l + \alpha_i, \alpha_j \ra -3} \bigotimes_{in(h) = i} \det(V_{out(h)})^{-1} \{-v_i + v_j - 2\}.$$

An analogous computation shows that $\sE_i * \sF_j(\l-\alpha_j)$ is also equal to the above. This is not so surprising since $i$ and $j$ play symmetric roles in the definition of $\B_{\bj i}(\l)$. This proves condition (\ref{co:EiFj}). 
\end{proof}

\subsection{Proof of (\ref{co:EiEj})}

\begin{Proposition}
If $ i \ne j $ are not connected by an edge, then $ \sE_i * \sE_j \cong \sE_j * \sE_i $.
\end{Proposition}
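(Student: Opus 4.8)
The plan is to mimic the proof of relation (\ref{co:EiFj}) just given. Both $\sE_i * \sE_j$ and $\sE_j * \sE_i$ are convolutions of structure sheaves of Hecke correspondences twisted by line bundles, so I would compute each as the structure sheaf of an explicit variety of triples and then compare.

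First I would work inside the triple product $\M(\l) \times \M(\l+\alpha_j) \times \M(\l+\alpha_i+\alpha_j)$ and use $\sE_i * \sE_j = p_{13*}(p_{12}^*\sE_j \otimes p_{23}^*\sE_i)$ to reduce the problem to understanding the intersection $\pi_{12}^{-1}(\B_j) \cap \pi_{23}^{-1}(\B_i)$. I would define $\B_{ij}(\l)$ to be the variety of triples $(B,V,S)$ with $(B,V) \in \mu^{-1}(0)^s \subset \mathrm{M}(\l)$ and $S \subset V$ a $B$-invariant subspace with $\dim(S) = v - e_i - e_j$, $\im(B_{q(i)}) \subset S_i$ and $\im(B_{q(j)}) \subset S_j$, modulo $\GL(V)$. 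A point of the intersection is a chain $V \supset V' \supset V''$ in which $V'$ is obtained from $V$ by shrinking the $j$-component and $V''$ from $V'$ by shrinking the $i$-component; setting $S := V''$ gives a point of $\B_{ij}(\l)$.

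The essential point, and the only place the hypothesis enters, is that because $i$ and $j$ are not joined by an edge the $B$-invariance conditions at the two vertices decouple: the arrows incident to $i$ involve only neighbors of $i$ (none of which is $j$), and likewise for $j$, so shrinking at $i$ and shrinking at $j$ impose independent and symmetric conditions. Moreover the middle term is recovered from the outer two via $V'_i = V_i$, $V'_j = V''_j$ and $V'_k = V_k = V''_k$ otherwise, so $\pi_{13}$ is an isomorphism onto its image. Since $\B_i$ and $\B_j$ are Lagrangian, Lemma \ref{lem:lagint} then shows the intersection is transverse; hence $p_{12}^*\O_{\B_j} \otimes p_{23}^*\O_{\B_i} \cong \O_{\B_{ij}(\l)}$ with no higher Tor, and pushing forward yields $\O_{\B_i} * \O_{\B_j} \cong \O_{\B_{ij}(\l)}$. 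Running the identical argument with the roles of $i$ and $j$ exchanged gives $\O_{\B_j} * \O_{\B_i} \cong \O_{\B_{ij}(\l)}$, for the defining conditions of $\B_{ij}(\l)$ are symmetric in $i$ and $j$.

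It then remains to match the twisting line bundles and equivariant shifts. I expect this bookkeeping to be the main obstacle, but it is tamed by the same no-edge hypothesis: the neighbor factors $\bigotimes_{in(h)=i}\det(V_{out(h)})^{-1}$ carried by $\sE_i$ never involve the $j$-component and vice versa, so after rewriting the middle-factor bundles using $V'_i = V_i$ and $V'_j = V''_j$, both orders produce the same line bundle on $\M(\l) \times \M(\l+\alpha_i+\alpha_j)$. Comparing the two resulting expressions then gives $\sE_i * \sE_j \cong \sE_j * \sE_i$.
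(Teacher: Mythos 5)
Your proposal is correct and takes essentially the same approach as the paper: the paper's entire proof of this proposition is the remark that it ``follows directly as in the proof of the last theorem'' (the relation $\sF_j * \sE_i \cong \sE_i * \sF_j$), and you have carried out exactly that adaptation, identifying both convolutions with the structure sheaf of a symmetric variety of triples via Lemma \ref{lem:lagint} and matching the line bundles. Your observation that the no-edge hypothesis is what makes the $B$-invariance of the intermediate space follow from that of $S$ (and makes the twists agree) is precisely the point implicit in the paper's one-line proof.
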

This follows directly as in the proof of the last theorem. More difficult is the Serre relation.

\begin{Theorem}\label{thm:Serrerel}
 $\sE_i * \sE_j * \sE_i \cong \sE_i^{(2)} * \sE_j \oplus \sE_j * \sE_i^{(2)}$, when $i \ne j $ are joined by an edge.
 \end{Theorem}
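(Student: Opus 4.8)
The plan is to compute the triple convolution $\sE_i * \sE_j * \sE_i$ directly as a Fourier-Mukai kernel on $\M(\l) \times \M(\l + 2\alpha_i + \alpha_j)$ and to exhibit its decomposition geometrically, in the spirit of the proof of (\ref{co:EiFj}). First I would describe the iterated correspondence $W \subset \M(\l) \times \M(\l+\alpha_i) \times \M(\l+\alpha_i+\alpha_j) \times \M(\l+2\alpha_i+\alpha_j)$ parametrizing stable points together with a $B$-invariant flag $V \supset V^1 \supset V^2 \supset V^3$ in which $V/V^1$ and $V^2/V^3$ are one-dimensional at the vertex $i$ while $V^1/V^2$ is one-dimensional at $j$. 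Passing to the quotient module $Q := V/V^3$, which is two-dimensional at $i$ and one-dimensional at $j$, the only relevant data are the edge maps $a \colon Q_i \to Q_j$ and $b \colon Q_j \to Q_i$; the moment map relations $\mu = 0$ force $ba = 0$ and $ab = 0$, and since $Q_j$ is one-dimensional this is equivalent to $a = 0$ or $b = 0$.

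This immediately splits the image of $W$ under $\pi_{14}$ into two pieces. On the locus $\{a = 0\}$ the intermediate flag is governed by a line $\ell$ with $\im(b) \subseteq \ell \subseteq Q_i$, and forgetting $\ell$ recovers exactly the correspondence underlying $\sE_i^{(2)} * \sE_j$; on the locus $\{b = 0\}$ one gets the correspondence underlying $\sE_j * \sE_i^{(2)}$. These two pieces $Z_A, Z_B$ meet along the excess locus $Z_{AB} = \{a = b = 0\}$, over which the space of admissible lines $\ell$ jumps to a $\p^1$. Thus $W$ is reducible, with two components each isomorphic to the blow-up of $Z_A$ (respectively $Z_B$) along $Z_{AB}$, glued along a common exceptional $\p^1$-bundle.

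The computation of $\sE_i * \sE_j * \sE_i$ then proceeds by resolving the non-transverse convolution (the intersection defining $W$ fails to be of expected dimension along $Z_{AB}$) and pushing forward. I would use Lemma \ref{lem:lagint} to confirm that away from $Z_{AB}$ the relevant intersection is transverse and $\pi_{14}$ is an isomorphism onto its image, so that the two open pieces contribute precisely $\sE_i^{(2)} * \sE_j$ and $\sE_j * \sE_i^{(2)}$ after matching the line-bundle twists coming from $\det(V_i), \det(V_i')$ and $\det(V_{out(h)})$. The contribution of the excess $\p^1$ is then isolated via the Koszul complex of the non-transverse intersection, or equivalently via a Mayer-Vietoris triangle relating $\O_W$ to the structure sheaves of its two components and of the exceptional locus.

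The main obstacle is precisely this excess intersection along $Z_{AB}$: the derived tensor product carries higher Tor sheaves supported on the $\p^1$-bundle, and one must show that, after incorporating the line-bundle twists, the resulting object splits as the claimed direct sum rather than forming a non-trivial extension. The crux is a careful degree count showing that the tautological line $\ell$ enters the total twist on the exceptional $\p^1$ with the degree needed to separate the two summands; once this is established, a vanishing of the relevant $\Ext^1$ (analogous to the splitting argument following Theorem \ref{th:EFFEtriangle}) guarantees that the triangle splits and yields $\sE_i * \sE_j * \sE_i \cong \sE_i^{(2)} * \sE_j \oplus \sE_j * \sE_i^{(2)}$.
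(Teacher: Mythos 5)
Your proposal is correct in outline, but it is not the paper's argument: it is essentially the alternative ``direct'' proof that the authors sketch in the Remark immediately following Theorem \ref{thm:Serrerel} and explicitly decline to carry out. The paper's actual proof is indirect: using Lemmas \ref{lem:homs1} and \ref{lem:homs2} (which are proved without using the Serre relation itself), one obtains canonical maps $\sE_j * \sE_i^{(2)} \xrightarrow{\alpha_1} \sE_i * \sE_j * \sE_i \xrightarrow{\alpha_2} \sE_j * \sE_i^{(2)}$ and $\sE_i^{(2)} * \sE_j \xrightarrow{\beta_1} \sE_i * \sE_j * \sE_i \xrightarrow{\beta_2} \sE_i^{(2)} * \sE_j$, and the geometry (the same decomposition $\B_{iji} = C_1 \cup C_2$ with $D = C_1 \cap C_2$ that you found) enters only to show that $\alpha_2 \circ \alpha_1$ and $\beta_2 \circ \beta_1$ are non-zero; then $\End(\sE_j * \sE_i^{(2)}) \cong \C$ makes these compositions multiples of the identity, idempotent completeness splits off both summands, and $\End(\sE_i * \sE_j * \sE_i) \cong \C^{\oplus 2}$ forces the complement to vanish. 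Your moment-map dichotomy is sound and is exactly the paper's lemma that $\B_{iji} = C_1 \cup C_2$ (if $a \neq 0$ it surjects onto the one-dimensional $Q_j$, so $b \circ a = 0$ forces $b = 0$). What the paper's route buys is the avoidance of your last steps, which require smoothness/irreducibility of the components and a computation of the twist on contracted fibres; what your route buys is an explicit identification of $\sE_i * \sE_j * \sE_i$ as a sheaf without invoking Lemma \ref{lem:homs2}.

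There is, however, one concrete error in your write-up that you should fix before executing the plan: there is no excess intersection and there are no higher Tor sheaves. Both $C_1$ and $C_2$ have the expected (Lagrangian) dimension; $Z_{AB}$ supports only their mutual intersection $D$, which is lower-dimensional and is not a component, so the intersection is dimensionally proper everywhere even though it fails to be transverse along $D$. Since $\pi_{12}^{-1}(\B_{ji})$ and $\pi_{23}^{-1}(\B_i)$ are smooth (hence Cohen--Macaulay) and meet in proper dimension, the derived tensor product is the honest structure sheaf of the scheme-theoretic intersection --- this is exactly why the paper can state Lemma \ref{lem:3} as $\sE_i * \sE_j * \sE_i \cong \pi_{13*}(\O_{\B_{iji}} \otimes \sL_{iji})$, a pushforward of a sheaf. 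The genuine difficulty sits in the pushforward, not the tensor product: $\pi_{13}$ contracts the divisor $D$, a $\p^1$-bundle over $\pi_{13}(D)$, so $R\pi_{13*}$ could a priori produce a complex with nontrivial extensions. Your Mayer--Vietoris sequence and degree count (the twisted sheaf restricts to $\O_{\p^1}(-1)$ on contracted fibres, so the $\O_D$ term pushes forward to zero) are the correct tools for this and are what the paper's Remark prescribes; note that once the $\O_D$ term dies, the splitting is automatic from the direct sum in $0 \rightarrow \O_{C_1}(-D) \oplus \O_{C_2}(-D) \rightarrow \O_{\B_{iji}} \rightarrow \O_D \rightarrow 0$, so your final $\Ext^1$-vanishing step (though available from Lemma \ref{lem:homs1}) is not needed. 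Likewise, the assertion that $C_1$ and $C_2$ are literal blow-ups of $Z_A$, $Z_B$ along $Z_{AB}$ is unproven and unnecessary; all that is used is that $D \rightarrow Z_{AB}$ is a $\p^1$-fibration.
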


\begin{proof} By Lemma \ref{lem:homs2}, we have the following canonical maps 
$$\sE_j * \sE_i^{(2)} \xrightarrow{\alpha_1} \sE_i * \sE_j * \sE_i \xrightarrow{\alpha_2} \sE_j * \sE_i^{(2)} \text{   and   } 
\sE_i^{(2)} * \sE_j \xrightarrow{\beta_1} \sE_i * \sE_j * \sE_i \xrightarrow{\beta_2} \sE_i^{(2)} * \sE_j.$$
If we can show these compositions are non-zero then they must be the identity (up to a multiple) since $\End(\sE_j * \sE_i^{(2)}) \cong \C \cong \End(\sE_i^{(2)} * \sE_j)$ by Lemma \ref{lem:homs1}. Thus we get 
$$\sE_i * \sE_j * \sE_i \cong \sE_i^{(2)} * \sE_j \oplus \sE_j * \sE_i^{(2)} \oplus \mathcal{R}$$
for some $\mathcal{R}$\footnote{Here we are using the idempotent completeness of our categories.  For more details, see \cite{ck3}, section 4.1}. Since, by Lemma \ref{lem:homs2}, $\End(\sE_i * \sE_j * \sE_i) \cong \C^{\oplus 2}$, it follows $\mathcal{R} = 0$ and we are done. 

We now proceed to show that $\alpha_2 \circ \alpha_1 \ne 0$ (we can similarly show that $\beta_2 \circ \beta_1 \ne 0$). We will ignore the $ \{\cdot \}$ shifts in order to simplify notation (they are not relevant for checking the above fact).  

First we identify $\sE_j * \sE_i^{(2)}$ as follows. We define $\hB_{ji^{(2)}}(\l)$ to be the variety parametrizing all triples $(B,V,S)$ with $(B,V) \in \mu^{-1}(0)^s \subset \mathrm{M}(\l)$ and $S \subset V$ satisfying the following:
\begin{itemize}
    \item $\dim(S) = \dim(V) - 2e_i - e_j$,
    \item $S$ is $B$-stable
    \item $\im (B_{q(k)}) \subset S_k$ for all $k \in I$
    \item the induced map $B_{\overline{h}}: V_j \rightarrow V_i/S_i$ is zero.  
\end{itemize}
Let $\B_{ji^{(2)}}(\l) = \hB_{ji^{(2)}}(\l)/GL(V)$ be the quotient by the free action of $GL(V)$. We have a closed embedding $f: \B_{ji^{(2)}}(\l) \rightarrow \M(\l) \times \M(\l+2\alpha_i) \times \M(\l+2\alpha_i+\alpha_j)$ given by 
\begin{enumerate}
\item $(B,V) := (B,V)$ 
\item $(B',V') := (B|_{V'}, V')$ where $V'_k := V_k$ if $k \ne i$ and $V'_i = S_i$ 
\item $(B'', V'') := (B|_S, S)$.
\end{enumerate}
$\pi_{13}: \B_{ji^{(2)}}(\l) \rightarrow \M(\l) \times \M(\l+2\alpha_i+\alpha_j)$ is an isomorphism onto its image since $(B',V')$ can be recovered from $(B,V)$ and $(B,V)$ and $S$. Thus we get a sequence of isomorphisms $\B_{ji^{(2)}}(\l) \xrightarrow{\sim} f(\B_{ji^{(2)}}(\l)) \xrightarrow{\sim} (\pi_{13} \circ f)(\B_{ji^{(2)}}(\l)).$

Since $\B_i^{(2)}$ and $\B_j$ are Lagrangian subvarieties, it follows by Lemma \ref{lem:lagint} that the intersection $\pi_{12}^{-1}(\B_i^{(2)}) \cap \pi_{23}^{-1}(\B_j)$ is of the expected dimension. Thus 
$$\O_{\pi_{12}^{-1}(\B_i^{(2)}(\l))} \otimes \O_{\pi_{23}^{-1}(\B_j(\l+2\alpha_i))} \cong \O_{\B_{ji^{(2)}}(\l)}$$
and hence $\O_{\B_j(\l+2\alpha_i)} * \O_{\B_i^{(2)}(\l)} \cong \O_{\B_{ji^{(2)}}(\l)}$. 

Keeping track of the line bundles of $\sE_i^{(2)}$ and $\sE_j$: 
\begin{Lemma}\label{lem:1} We have 
$$\sE_j * \sE_i^{(2)}(\l) \cong \O_{\B_{ji^{(2)}}(\l)} \otimes \sL_{ji^{(2)}} \subset \M(\l) \times \M(\l+2\alpha_i+\alpha_j)$$ 
where 
$$\sL_{ji^{(2)}} = \det(V_i)^2 \det(V'_i)^2 \det(V_j) \det(V'_j) \bigotimes_{h:in(h) = i} \det(V_{out(h)})^{-2} \bigotimes_{h:in(h)=j} \det(V'_{out(h)})^{-1}.$$
\end{Lemma}

Next we need to compute $\sE_i * \sE_j * \sE_i$.  As a first step, we calculate $\sE_j * \sE_i$ which is almost identical to the computation of $\sE_j * \sE_i^{(2)}$ above. Define $\hB_{ji}$ to be the variety parametrizing triples $(B,V,S)$ with $(B,V) \in \mu^{-1}(0)^s \subset \mathrm{M}(\l)$ and $S \subset V$ satisfying the following:
\begin{itemize}
    \item $\dim(S) = \dim(V) - e_i - e_j$,
    \item $S$ is $B$-stable,
    \item $\im (B_{q(k)}) \subset S_k$ for all $k \in I$
    \item the induced map $B_{\overline{h}}: V_j \rightarrow V_i/S_i$ is zero. 
\end{itemize}
Let $\B_{ji} = \hB_{ji}/GL(V)$. As before, the inclusion of $\B_{ij}$ is equal to $\pi_{12}^{-1}(\B_i) \cap \pi_{23}^{-1}(\B_j)$.  Moreover the restriction of $\pi_{13}$ to $ \B_{ij} $ is an isomorphism. Keeping track of line bundles:
\begin{Lemma}\label{lem:2} We have $\sE_j * \sE_i \cong \O_{\B_{ji}} \otimes \sL_{ji}$
where
$$\sL_{ji} = \det(V_i) \det(V'_i) \det(V_j) \det(V'_j) \bigotimes_{in(h) = i} \det(V_{out(h)})^{-1} \bigotimes_{in(h)=j} \det(V'_{out(h)})^{-1}.$$
\end{Lemma}

Now we can compute $\sE_i * \sE_j * \sE_i$. Define $\hB_{iji}$ to be the variety parametrizing quadruples $(B,V,S,S')$ with $(B,V) \in \mu^{-1}(0)^s \subset \mathrm{M}(\l)$ and $S, S' \subset V$ satisfying the following:
\begin{itemize}
\item $S' \subset S$ are $B$-stable subspaces with $\dim(S) = \dim(V) - e_i - e_j$ and $\dim(S') = \dim(V) - 2e_i - e_j$
\item $\im (B_{q(k)}) \subset S_k$ for all $k \in I$
\item the induced map $B_{\overline{h}}: V_j \rightarrow V_i/S_i$ is zero
\item the induced map $B_h: S_i \rightarrow V_j/S'_j$ is zero
\end{itemize}
Let $\B_{iji} = \hB_{iji}/GL(V)$. As in all the other cases above,
$$\B_{iji} = \pi_{12}^{-1}(\B_{ji}) \cap \pi_{23}^{-1}(\B_i) \subset \M(\l) \times \M(\l+\alpha_i+\alpha_j) \times \M(\l+2\alpha_i+\alpha_j).$$
However, the map $\pi_{13}$ restricted to $\B_{iji}$ is now only generically one-to-one. 

Let $C_1 \subset \B_{iji}$ denote the subvariety where the induced map $B_h : V_i \rightarrow V_j/S'_j$ is zero and let $C_2 \subset \B_{iji}$ denote the subvariety where $B_{\overline{h}}: V_j \rightarrow S_i/S'_i$ is zero. 

\begin{Lemma}
The variety $\B_{iji}$ is equal to the union of $C_1$ and $C_2$.
\end{Lemma}
\begin{proof}
Suppose that $(B,S,S')$ is such that the induced map $B_h : V_i \rightarrow V_j/S'_j$ is non-zero, so that $(B,S,S')$ is not in $C_1$. Since $\dim(V_j) = \dim(S'_j) + 1$, it follows that 
$$\im(B_h) + S'_j = V_j.$$
By the moment map condition, $B_{\overline{h}} B_h: V_i \rightarrow S'_i$, thus $B_{\overline{h}}(\im B_h)\subset S'_i$. Also, $B_{\overline{h}}(S'_j) \subset S'_i$, since $S'$ is $B$-stable. Therefore, $B_{\overline{h}} (V_j) \subset S'_i$ and $(B,S,S')\in C_2.$ 
\end{proof}

Keeping track of line bundles, we have the following.
\begin{Lemma}\label{lem:3} We have
$$\sE_i * \sE_j *\sE_i \cong \pi_{13*}(\O_{\B_{iji}} \otimes \sL_{iji})$$
where $\sL_{iji}$ is 
$$\det(V_i) \det(V'_i)^2 \det(V''_i) \det(V_j) \det(V'_j) \bigotimes_{h:in(h) = i} \det(V_{out(h)})^{-1} \det(V'_{out(h)})^{-1} \bigotimes_{h:in(h)=j} \det(V'_{out(h)})^{-1}$$
or equivalently
$$\det(V_i) \det(V'_i) \det(V''_i)^2 \det(V_j)^2 \bigotimes_{h:in(h) = i} \det(V_{out(h)})^{-2} \bigotimes_{h:in(h) = j} \det(V''_{out(h)})^{-1}.$$
\end{Lemma}
Recall that our goal is to understand the map $\alpha_1: \sE_i^{(2)} * \sE_j \rightarrow \sE_i * \sE_j * \sE_i$.  Recall that $ \alpha_1 $ spans the Hom space in which it lives.  By adjunction, the adjoint of $ \alpha_1 $, denoted $a$, spans the Hom space
$$ \pi_{13}^* (\O_{\B_{ji^{(2)}}} \otimes \sL_{ji^{(2)}}) \rightarrow \O_{\B_{iji}} \otimes \sL_{iji}.$$
Cancelling out line bundles on both sides we obtain a map (also denoted $ a $)
\begin{equation}\label{eq:map}
a: \pi_{13}^* (\O_{\B_{ji^{(2)}}}) \otimes \det(V_i) \det(V'_i)^{-1} \det(V_j)^{-1} \det(V''_j) \rightarrow \O_{\B_{iji}}.
\end{equation}
which spans the hom space in which it lives.

Now let $D := C_1 \cap C_2$. Inside $C_1$, $D$ is a divisor cut out by a section of $\Hom(V_j/V''_j, V_i/V'_i)$, namely the section induced by $B_{\overline{h}}$. Thus the natural map $\O_{C_1}(-D) \rightarrow \O_{C_1 \cup C_2}$ induces a non-zero map 
$$s: \O_{C_1} \otimes \det(V_j) \det(V''_j)^{-1} \det(V_i)^{-1} \det(V'_i) \rightarrow \O_{\B_{iji}}.$$
Finally, $C_1 \subset \pi_{13}^{-1}\B_{ji^{(2)}}$ so precomposing this map with the natural map $\pi_{13}^* \O_{\B_{ji^{(2)}}} \rightarrow \O_{C_1}$ we get a map (also denoted $ s $)
$$s : \pi_{13}^* (\O_{\B_{ji^{(2)}}}) \otimes \det(V_j) \det(V''_j)^{-1} \det(V_i)^{-1} \det(V'_i) \rightarrow \O_{\B_{iji}}.$$ 
Note that $ s $ lives in the same Hom space as $ a $ above (\ref{eq:map}).  Since $ s $ is non-zero, it equals $ a $ up to a non-zero multiple.

It follows that 
$$\alpha_1: \O_{\B_{ji^{(2)}}} \otimes \sL_{ji^{(2)}} \rightarrow \pi_{13*}(\O_{\B_{iji}} \otimes \sL_{iji})$$
is non-zero on a dense open subset of $\O_{\B_{ji^{(2)}}}$. Similarly, one shows that 
$$\alpha_2: \pi_{13*}(\O_{\B_{iji}} \otimes \sL_{iji}) \rightarrow \O_{\B_{ji^{(2)}}} \otimes \sL_{ji^{(2)}}$$
is non-zero on an open dense subset of $\O_{\B_{ji^{(2)}}}$. It follows that $\alpha_2 \circ \alpha_1 \ne 0$ and we are done.

\end{proof}

\begin{Remark} One can actually prove the Serre relation in Theorem \ref{thm:Serrerel} directly, as  in \cite{ck3}. More precisely, one can show that $C_1$ and $C_2$ are the irreducible components of $\B_{iji}$ and that they are smooth. One then shows that
$$\pi_{13*}(\O_{\B_{iji}} \otimes \sL_{iji}) \cong \sE_j * \sE_i^{(2)} \oplus \sE_i^{(2)} * \sE_j$$
by using the standard exact sequence
$$0 \rightarrow \O_{C_1}(-D) \oplus \O_{C_2}(-D) \rightarrow \O_{\B_{iji}} \rightarrow \O_D \rightarrow 0.$$
In other words, tensoring by $\sL_{iji}$ and applying $\pi_{13*}$ one shows that $\O_{C_1}(-D) \otimes \sL_{iji}$ and $\O_{C_2}(-D) \otimes \sL_{iji}$ map to $\sE_j * \sE_i^{(2)}$ and $\sE_i^{(2)} * \sE_j$, and that $\O_D \otimes \sL_{iji}$ maps to zero (note that $D \rightarrow \pi_{13}(D)$ is a $\p^1$ fibration so one just checks that $\sL_{iji}$ restricts to $\O_{\p^1}(-1)$ on the fibres).  However, we used the above approach in order to avoid repeating this longer computation. 
\end{Remark}
\begin{Lemma}\label{lem:homs1} If $i,j \in I$ are joined by an edge then
\begin{eqnarray}\label{eq:1}
\Ext^k(\sE^{(b)}_i * \sE^{(a)}_j, \sE^{(a)}_j * \sE^{(b)}_i) \cong
\left\{\begin{array}{ll}
0 & \text{if $k < ab$} \\
\C & \text{if $k = ab$}
\end{array}
\right.\end{eqnarray}
while\begin{eqnarray}\label{eq:2}
\Ext^k(\sE^{(b)}_i * \sE^{(a)}_j, \sE^{(b)}_i * \sE^{(a)}_j) \cong
\left\{\begin{array}{ll}
0 & \text{if $k < 0$} \\
\C \cdot \id & \text{if $k = 0$}\end{array}\right.
\end{eqnarray}
for any $a,b \ge 0$. The same results hold if we replace all $\sE$s by $\sF$s.
\end{Lemma}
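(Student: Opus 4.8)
The plan is to mirror the strategy of Lemma~\ref{lem:EF-FEmaps}: use biadjunction to fold both convolutions onto the diagonal, rewrite everything in terms of single-vertex $\sl_2$ data, and then read off the answer from a spectral sequence together with the contracting $\C^\times$-action. I would establish the self-$\Ext$ statement (\ref{eq:2}) first, for all $a,b$, and then feed it into the proof of (\ref{eq:1}).

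For (\ref{eq:2}) the cleanest route is geometric. Exactly as in Lemmas~\ref{lem:1}--\ref{lem:2}, and using Lemma~\ref{lem:lagint}, the convolution $\sE^{(b)}_i * \sE^{(a)}_j$ is a twisted structure sheaf $\O_Z \otimes \sL$ of a \emph{smooth} Lagrangian subvariety $Z \subset \M(\l) \times \M(\l + a\alpha_j + b\alpha_i)$: in this pure word order the projection $\pi_{13}$ from the relevant triple-product correspondence is an isomorphism onto its image (the Serre-type degeneration only occurs for the mixed word $iji$). Hence $\Ext^k(\sE^{(b)}_i * \sE^{(a)}_j, \sE^{(b)}_i * \sE^{(a)}_j) \cong \Ext^k(\O_Z, \O_Z)$, which for $Z$ smooth is computed by the local-to-global spectral sequence $E_2^{p,q} = H^p(Z, \wedge^q N_Z) \Rightarrow \Ext^{p+q}$. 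Since $q \ge 0$ there are no contributions in negative total degree, and the only degree-$0$ contribution is $H^0(Z,\O_Z)$; the $\C^\times$-action contracts $Z$ onto a proper subvariety (as in Lemma~\ref{lem:EF-FEmaps} and condition~(\ref{co:trivial})), so $H^0(Z,\O_Z) = \C$, giving (\ref{eq:2}).

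For (\ref{eq:1}) I would fold the entire source onto the diagonal via $\sP \mapsto \sP_R$ using the adjunctions of Lemma~\ref{lem:EFadj}, and then apply the commuting relation $\sF_j * \sE_i \cong \sE_i * \sF_j$ for $i \ne j$ (condition~(\ref{co:EiFj}), in its divided-power form from Theorem~\ref{th:geomtonaive}) to separate the two vertices. This gives, up to an explicit overall homological shift $S$,
$$\Ext^k(\sE^{(b)}_i * \sE^{(a)}_j, \sE^{(a)}_j * \sE^{(b)}_i) \cong \Ext^{k-S}\!\left(\O_\Delta,\ (\sF^{(a)}_j * \sE^{(a)}_j) * (\sF^{(b)}_i * \sE^{(b)}_i)\right).$$
Each factor is now a single-vertex $\sl_2$ Casimir, so by the divided-power consequence of condition~(\ref{co:EF}) each decomposes, at the level of cohomology sheaves, as $\O_\Delta \otimes H^\star(\p^\ast)$ plus a sum of terms $\sE^{(c)} * \sF^{(c)}$ with $c \ge 1$. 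Feeding this into the spectral sequence for $\Ext^\bullet(\O_\Delta, -)$, the leading ($c=0$) term contributes $\Ext^\bullet(\O_\Delta,\O_\Delta)\otimes H^\star(\p^\ast)\otimes H^\star(\p^\ast)$, whose bottom piece is $H^0(\O_{\M}) = \C$; the remaining terms reduce, by the same adjunction moves, to $\Ext$-groups of shorter words controlled by the degree estimates already in hand. The $\sF$-versions follow by the same argument, or by applying the duality $(\cdot)_R$ of Lemma~\ref{lem:EFadj}.

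The main obstacle is the shift bookkeeping in (\ref{eq:1}): one must verify that the cumulative shift $S$ coming from the four adjunctions, measured against the cohomological spread of the two copies of $H^\star(\p^\ast)$ and of the polyvector cohomology $\Ext^\bullet(\O_\Delta,\O_\Delta)$, places the unique surviving one-dimensional contribution precisely in degree $ab$ with nothing below it, and that every $c \ge 1$ term is supported in degrees $> ab$. In the geometric picture this is exactly the assertion that $Z_1 = \supp(\sE^{(b)}_i * \sE^{(a)}_j)$ and $Z_2 = \supp(\sE^{(a)}_j * \sE^{(b)}_i)$ meet in excess codimension $ab$ in each $Z_\ell$, with the top exterior power of the excess normal bundle restricting to a line bundle whose $\C^\times$-invariant global sections over $Z_1 \cap Z_2$ form a one-dimensional space; the $a=b=1$ instance is precisely the $\p^1$-fibration $D \to \pi_{13}(D)$ appearing in the proof of Theorem~\ref{thm:Serrerel}.
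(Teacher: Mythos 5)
The paper's own ``proof'' here is a one-line citation: the statement is Lemma 4.5 of \cite{ck3}, proved there abstractly from the axioms of a geometric categorical $\g$ action, with the one remark that matters being non-circularity (the proof in \cite{ck3} never uses condition (\ref{co:EiEj}), which is essential since Lemma \ref{lem:homs1} feeds into the proof of that condition via Lemma \ref{lem:homs2} and Theorem \ref{thm:Serrerel}). Your attempt is a direct argument instead, and part of it works: for (\ref{eq:2}), in the pure word $\sE^{(b)}_i * \sE^{(a)}_j$ the intermediate flag is recoverable from the outer two, so $\pi_{13}$ is an isomorphism onto its image, Lemma \ref{lem:lagint} gives transversality, the convolution is a line bundle on a smooth subvariety $Z$, negative self-$\Ext$s of a sheaf vanish for free, and $\Ext^0 \cong H^0(\O_Z) \cong \C$ by the contracting $\C^\times$-action as in Lemma \ref{lem:EF-FEmaps} (modulo connectedness of $Z$, which the paper also implicitly assumes for $\B^{(2)}_i$). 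This is a genuinely different, more geometric route than the paper's, and it too avoids condition (\ref{co:EiEj}).

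The gap is in (\ref{eq:1}), which is where the entire content of the lemma lies. Your adjunction reduction to $\Ext^{k-S}\bigl(\O_\Delta, (\sF^{(a)}_j * \sE^{(a)}_j) * (\sF^{(b)}_i * \sE^{(b)}_i)\bigr)$ using Lemma \ref{lem:EFadj} and condition (\ref{co:EiFj}) is sound, but after that three things are missing. First, the ``divided-power consequence of condition (\ref{co:EF})'' you invoke --- a decomposition of $\sF^{(a)} * \sE^{(a)}$ into $\O_\Delta$-terms and terms $\sE^{(c)} * \sF^{(c)}$ with $c \ge 1$, with explicit graded multiplicity spaces --- is neither an axiom nor proved anywhere in this paper; it is a nontrivial output of the categorical $\sl_2$ theory, and its multiplicity spaces depend on $\la \l, \alpha_i \ra$, $\la \l, \alpha_j \ra$. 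Second, the decisive cancellation --- that the weight-dependent adjunction shifts exactly offset the weight-dependent gradings of those multiplicity spaces so that the unique surviving line lands in degree $ab$ with everything else strictly above --- is precisely what you defer with ``one must verify''; that bookkeeping \emph{is} the proof, and it is restated rather than carried out. Third, even granting both points, conditions (\ref{co:EE})--(\ref{co:EF}) hold only at the level of cohomology sheaves, so what you get is a spectral sequence, not a direct sum splitting of $\Ext$ groups: the $\C$ sitting in total degree $ab$ can a priori be killed by a differential into total degree $ab+1$. Your argument therefore yields at best $\Ext^{k} = 0$ for $k < ab$ and $\dim \Ext^{ab} \le 1$, not $\Ext^{ab} \cong \C$; nonvanishing needs either genuine splittings (e.g.\ via the 2-representation results of \cite{ckl2}) or an explicitly constructed nonzero class, which is how the paper handles the case $a=b=1$ when it produces $T_{ij}$ from the divisor exact sequence in the proof of condition (\ref{co:Eijdef}).
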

\begin{proof}
This is Lemma 4.5 of \cite{ck3}. Notice that its proof never uses condition (\ref{co:EiEj}). 
\end{proof}
\begin{Lemma}\label{lem:homs2} If $i,j \in I$ are joined by an edge then 
$$\Hom(\sE^{(2)}_i * \sE_j, \sE_i * \sE_j * \sE_i) \cong \C \cong \Hom(\sE_i * \sE_j * \sE_i, \sE^{(2)}_i * \sE_j)$$
$$\Hom(\sE_j * \sE^{(2)}_i, \sE_i * \sE_j * \sE_i) \cong \C \cong \Hom(\sE_i * \sE_j * \sE_i, \sE_j * \sE_i^{(2)}) $$
and $\End(\sE_i * \sE_j * \sE_i) \cong \C^{\oplus 2}$. 
\end{Lemma}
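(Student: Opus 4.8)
The statement bundles four one-dimensional mixed Hom spaces with the computation $\End(\sE_i * \sE_j * \sE_i) \cong \C^{\oplus 2}$. Writing $\sA := \sE_i * \sE_j * \sE_i$, my plan is to reduce every one of these to the sharp Ext vanishing supplied by Lemma~\ref{lem:homs1}, using only four inputs already available: the adjunctions of condition~(\ref{co:adjoints}), the commutation $\sF_i * \sE_j \cong \sE_j * \sF_i$ for $i \ne j$ from condition~(\ref{co:EiFj}), the cohomology $\sH^*(\sE_i * \sE_i) \cong \sE_i^{(2)} \otimes_\k H^\star(\p^1)$ from condition~(\ref{co:EE}), and the $\sl_2$ relations of Section~\ref{sec:sl2rels} in their divided-power form. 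Recall these $\sl_2$ relations are legitimately in hand, since conditions~(\ref{co:EE})--(\ref{co:contain}) already assemble a geometric categorical $\sl_2$ action (compare Theorem~\ref{th:geomtonaive}); in particular $\sF_i * \sE_i^{(2)}$ is $\sE_i^{(2)} * \sF_i$ plus a correction of the form $\sE_i \otimes_\k H^\star(\p^{n})$ (or the reverse, depending on the weight).

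First I would dispatch the four mixed spaces. By adjunction (applying $\sP \mapsto \sP_R$ turns $\Hom(\sA,\sB)$ into $\Hom(\sB_R,\sA_R)$) together with the $\sE \leftrightarrow \sF$ symmetry of Lemma~\ref{lem:homs1} and the evident left--right symmetry, it suffices to compute a single representative, say $\Hom(\sE_i^{(2)} * \sE_j, \sA)$. I would peel the leftmost copy of $\sE_i$ from the target via $(\sE_i)_L \cong \sF_i$ (up to shift), reaching $\Hom(\sF_i * \sE_i^{(2)} * \sE_j, \sE_j * \sE_i)$. Rewriting $\sF_i * \sE_i^{(2)}$ by the divided-power commutator splits the source into an $\sE_i^{(2)} * \sF_i * \sE_j$ term and an $\sE_i * \sE_j \otimes_\k H^\star(\p^{n})$ term. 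In the first, I commute $\sF_i$ past $\sE_j$ using~(\ref{co:EiFj}) and peel it against a copy of $\sE_i$ via $(\sF_i)_R \cong \sE_i$, producing a factor $\sE_i * \sE_i$ whose cohomology is $\sE_i^{(2)} \otimes_\k H^\star(\p^1)$ by~(\ref{co:EE}); this lands the contribution in $\Ext^\bullet(\sE_i^{(2)} * \sE_j, \sE_j * \sE_i^{(2)})$, covered by Lemma~\ref{lem:homs1}. The second term lands directly in $\Ext^\bullet(\sE_i * \sE_j, \sE_j * \sE_i)$. In both cases $\Ext^k = 0$ for $k$ below the critical value and $\cong \C$ at it, so the spectral sequences collapse and a grading count leaves exactly one copy of $\C$ in degree zero.

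For the endomorphism algebra the most transparent route uses the two distinguished triangles obtained by convolving the canonical map $T_{ij}$ of condition~(\ref{co:Eijdef}) (which exists and is unique by Lemma~\ref{lem:homs1}) with $\sE_i$ on the left and on the right:
\begin{equation*}
\sE_i * \sE_i * \sE_j[-1]\{1\} \to \sA \to \sE_i * \sE_{ij}, \qquad \sA[-1]\{1\} \to \sE_j * \sE_i * \sE_i \to \sE_{ij} * \sE_i .
\end{equation*}
Here $\sH^*(\sE_i * \sE_i * \sE_j) \cong \sE_i^{(2)} * \sE_j \otimes_\k H^\star(\p^1)$ and $\sH^*(\sE_j * \sE_i * \sE_i) \cong \sE_j * \sE_i^{(2)} \otimes_\k H^\star(\p^1)$. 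Applying $\Hom(\sA, -)$ and feeding in the mixed-Hom computations above isolates two one-dimensional contributions, one from the $\sE_i^{(2)} * \sE_j$ channel and one from the $\sE_j * \sE_i^{(2)}$ channel, giving $\C^{\oplus 2}$; equivalently one can run the pure adjunction reduction on $\End(\sA)$, peeling both outer $\sE_i$'s, and verify that precisely two degree-zero classes survive.

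The hard part will be the bookkeeping of error terms, not any isolated step. Every adjunction produces, besides the wanted $\sE_i * \sE_i$ factor, the diagonal summand of relation~(\ref{co:EF}), and in the triangle formulation the genuinely delicate point is that the cone terms $\sE_i * \sE_{ij}$ and $\sE_{ij} * \sE_i$ are of the same size as $\sA$, so the long exact sequences are not self-resolving and one must control them by a second, independent application of Lemma~\ref{lem:homs1} rather than by the triangle that defines them. Keeping the internal $\{\cdot\}$ and cohomological $[\cdot]$ shifts aligned so that the collapsed spectral sequences deposit exactly one $\C$ per channel in degree zero is where the care lies. It is exactly the sharpness of Lemma~\ref{lem:homs1} -- vanishing strictly below the critical degree and one-dimensionality at it -- that forces the collapses and rules out any spurious endomorphisms, so that the total is $\C^{\oplus 2}$ rather than something larger.
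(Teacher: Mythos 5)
Your proposal is correct and is essentially the paper's own argument: the paper proves the representative case $\Hom(\sE^{(2)}_i * \sE_j, \sE_i * \sE_j * \sE_i) \cong \C$ by exactly your sequence of moves --- peel an outer $\sE_i$ by adjunction (the paper happens to peel the rightmost factor rather than the leftmost), commute the resulting $\sF_i$ past $\sE_j$ using condition (\ref{co:EiFj}), split $\sE_i^{(2)} * \sF_i$ by the divided-power $\sl_2$ commutator (Corollary 4.4 of \cite{ck3}), and land both resulting channels in the sharp vanishing of Lemma \ref{lem:homs1} --- and then states that the remaining identities, including $\End(\sE_i * \sE_j * \sE_i) \cong \C^{\oplus 2}$, follow similarly, i.e.\ by the ``pure adjunction reduction'' you offer as the fallback to your triangle argument. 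The only place your sketch is looser than the paper is the final degree count: the paper's bookkeeping shows the two channels contribute in complementary weight regimes (for $\la \l, \alpha_i \ra \ge -2$, the $\sE_i^{(2)} * \sE_j$ channel gives $\C$ exactly when $\la \l, \alpha_i \ra = -2$, while the $\sE_i * \sE_j$ channel gives $\C$ exactly otherwise), so the total $\C$ emerges from a weight-dependent case analysis rather than a single uniform grading count --- which makes precise the ``bookkeeping of error terms'' you flag as the hard part.
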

\begin{proof}
We prove that $\Hom(\sE^{(2)}_i * \sE_j, \sE_i * \sE_j * \sE_i) \cong \C$ while the other identities follow similarly. To simplify notation we ignore the $\{ \cdot \}$ grading. We have
\begin{eqnarray*}
& & \Hom(\sE^{(2)}_i * \sE_j, \sE_i * \sE_j * \sE_i(\l))  \\
&\cong& \Hom(\sE_i^{(2)} * \sE_j * \sF_i(\l) [\la \l, \alpha_i \ra + 1], \sE_i * \sE_j(\l+\alpha_i)) \\
&\cong& \Hom(\sE_i^{(2)} * \sF_i(\l+\alpha_j) * \sE_j, \sE_i * \sE_j(\l+\alpha_i) [- \la \l, \alpha_i \ra - 1]) \\
&\cong& \Hom(\sF_i * \sE_i^{(2)} * \sE_j(\l+\alpha_i) \oplus \sE_i * \sE_j \otimes_\C H^\star(\p^{\la \l + \alpha_j, \alpha_i \ra + 2}), \sE_i * \sE_j [- \la \l, \alpha_i \ra - 1]) \\
&\cong& \Hom(\sE_i^{(2)} * \sE_j, \sE_i [- \la \l+2\alpha_i+\alpha_j, \alpha_i \ra - 1] * \sE_i * \sE_j [ - \la \l, \alpha_i \ra - 1])  \oplus \\
& & \Hom(\sE_i * \sE_j, \sE_i * \sE_j \otimes_\C H^\star(\p^{\la \l, \alpha_i \ra + 1})[- \la \l, \alpha_i \ra - 1])  \\
\end{eqnarray*}
where we assume $\la \l, \alpha_i \ra \ge - 2$ in order to simplify $\sE_i^{(2)} * \sF_i$ in the fourth line (we use Corollary 4.4 from \cite{ck3}). 

Now the first term in the last line is isomorphic to 
$$\Hom(\sE_i^{(2)} * \sE_j, \sE_i^{(2)} * \sE_j \otimes_\C H^\star(\p^1) [-2 \la \l, \alpha_i \ra - 5]) $$
which is zero if $\la \l, \alpha_i \ra > -2$ and $\C$ if $\la \l, \alpha_i \ra = -2$ by Lemma \ref{lem:homs1}. Meantime, the second summand is $\C$ unless $\la \l, \alpha_i \ra = -2$ in which case it vanishes altogether. Thus their direct sum is always isomorphic to $\C$ if $\la \l, \alpha_i \ra \ge -2$. 

The case $\la \l, \alpha_i \ra < -2$ is similar. 
\end{proof}

\subsection{Proof of (\ref{co:Eidef}) and (\ref{co:Eijdef})}

In this section we show that $\sE_{ij}$ deforms over $(\alpha_i + \alpha_j)^\perp$. To do this we identify $\sE_{ij}$ as a sheaf- the structure sheaf of a variety tensored by a line bundle- and write down an explicit deformation of this sheaf. The proof that $\sE_i$ deforms (condition (\ref{co:Eidef})) is strictly easier since $\sE_i$ is already identified as a sheaf. 

In the previous subsection we showed that 
$\sE_j * \sE_i \cong \O_{\B_{ji}} \otimes \sL_{ji}$
where 
$$\sL_{ji} = \det(V_i) \det(V'_i) \det(V_j) \det(V'_j) \bigotimes_{in(h) = i} \det(V_{out(h)})^{-1} \bigotimes_{in(h)=j} \det(V'_{out(h)})^{-1} \{-v_i -v_j\}.$$

Similarly, one can show that 
$\sE_i * \sE_j \cong \O_{\B_{ij}} \otimes \sL_{ij}$
where $\B_{ij}$ is defined the same way as $\B_{ji}$ except one imposes the condition that $B_h: V_i \rightarrow V_j/S_j$ is zero instead of $B_{\overline{h}}: V_j \rightarrow V_i/S_i$ being zero and 
$$\sL_{ij} = \det(V_i) \det(V'_i) \det(V_j) \det(V'_j) \bigotimes_{in(h) = j} \det(V_{out(h)})^{-1} \bigotimes_{in(h)=i} \det(V'_{out(h)})^{-1} \{-v_i -v_j\}.$$

Notice that 
\begin{equation}\label{eq:Lij}
\sL_{ij} \cong \sL_{ji} \otimes \det(V_i) \det(V'_i)^{-1} \det(V_j)^{-1} \det(V'_j).
\end{equation}

Relaxing the conditions on $B_h$ and $B_{\overline{h}}$ we can define $\B_{\{i,j\}}(\l)$ as follows. Let $\hB_{\{i,j\}}(\l)$ be the variety of triples $(B,V,S)$ where $(B,V) \in \mu^{-1}(0)^s \subset \mathrm{M}(\l)$ and $S \subset V$ satisfying the following:

\begin{itemize}
\item $\dim(S) = \dim(V) - e_i - e_j$ 
\item $S$ is $B$-stable
\item $\im (B_{q(k)}) \subset S_k$ for all $k \in I$.
\end{itemize}

Let $\B_{\{i,j\}}(\l) := \hB_{\{i,j\}}/GL(V)$.  

\begin{Lemma}
$\B_{\{i,j\}}(\l) \cong \B_{ij}(\l) \cup \B_{ji}(\l) \subset \M(\l) \times \M(\l+\alpha_i+\alpha_j)$.
\end{Lemma}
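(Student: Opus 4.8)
The plan is to prove the set-theoretic equality of the two subvarieties of $\M(\l) \times \M(\l+\alpha_i+\alpha_j)$; the asserted identification then follows since both sides carry the reduced structure inherited from the ambient product, exactly as in the earlier identification $\B_{iji} = C_1 \cup C_2$. One inclusion is immediate: each of $\B_{ij}(\l)$ and $\B_{ji}(\l)$ is cut out inside $\B_{\{i,j\}}(\l)$ by imposing one extra vanishing condition (on the induced $B_h$, resp. the induced $B_{\overline{h}}$), so that $\B_{ij}(\l) \cup \B_{ji}(\l) \subseteq \B_{\{i,j\}}(\l)$. For the reverse inclusion I must show that every triple $(B,V,S)$ satisfying the defining conditions of $\B_{\{i,j\}}(\l)$ automatically has either the induced map $B_h \colon V_i \to V_j/S_j$ or the induced map $B_{\overline{h}} \colon V_j \to V_i/S_i$ equal to zero. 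Here the dimension condition $\dim(S) = \dim(V) - e_i - e_j$ forces $S_k = V_k$ for $k \neq i,j$ and makes both $V_i/S_i$ and $V_j/S_j$ one-dimensional.

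The argument will mirror the proof that $\B_{iji}$ is the union of $C_1$ and $C_2$. Suppose $(B,V,S)$ does not lie in $\B_{ij}(\l)$, i.e.\ the induced map $B_h \colon V_i \to V_j/S_j$ is non-zero; since $V_j/S_j$ is one-dimensional this means $\im(B_h) + S_j = V_j$. I claim the moment map condition $\mu_i(B) = 0$ forces $B_{\overline{h}} B_h(V_i) \subseteq S_i$. Indeed, in
$$\mu_i(B) = \sum_{in(h')=i} \epsilon(h') B_{h'} B_{\overline{h'}} + B_{q(i)} B_{p(i)},$$
every summand except $\epsilon(\overline{h}) B_{\overline{h}} B_h$ has image contained in $S_i$: for a neighbour $k \neq j$ of $i$ the corresponding summand has image contained in $B_{h'}(V_k) = B_{h'}(S_k) \subseteq S_i$ by $B$-stability, while $B_{q(i)} B_{p(i)}$ has image in $\im(B_{q(i)}) \subseteq S_i$ by the framing condition. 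Hence $\epsilon(\overline{h}) B_{\overline{h}} B_h(V_i) \subseteq S_i$, and as $\epsilon(\overline{h}) = \pm 1$ we get $B_{\overline{h}} B_h(V_i) \subseteq S_i$. Combining this with $B_{\overline{h}}(S_j) \subseteq S_i$ (again $B$-stability) and $V_j = \im(B_h) + S_j$ yields $B_{\overline{h}}(V_j) \subseteq S_i$, so that the induced map $B_{\overline{h}} \colon V_j \to V_i/S_i$ vanishes and $(B,V,S) \in \B_{ji}(\l)$. This establishes the dichotomy, and hence the reverse inclusion.

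The one genuinely delicate point is the moment-map bookkeeping in the middle step: isolating the single term $\epsilon(\overline{h}) B_{\overline{h}} B_h$ and verifying that all the others land in $S_i$. This uses three inputs in an essential way --- $B$-stability of $S$ (which forces $B_{h'}(V_k) \subseteq S_i$ for every neighbour $k \neq j$, because $S_k = V_k$ there), the framing condition $\im(B_{q(i)}) \subseteq S_i$, and, crucially, the hypothesis that $\Gamma$ has no multiple edges, so that $\overline{h}$ is the unique oriented edge from $j$ to $i$ and there is exactly one exceptional term. Once the set-theoretic equality is in hand, the lemma follows: the two subvarieties have the same points in $\M(\l) \times \M(\l+\alpha_i+\alpha_j)$ and are cut out by the same equations apart from the relaxed conditions on $B_h$ and $B_{\overline{h}}$. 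Since all the defining conditions are $GL(V)$-invariant, it suffices to run the whole argument on $\hB_{\{i,j\}}(\l)$ before passing to the quotient.
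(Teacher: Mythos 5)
Your proof is correct and takes essentially the same route as the paper's: both rest on the moment map condition at vertex $i$, the $B$-stability of $S$, the framing condition $\im(B_{q(k)}) \subseteq S_k$, and the one-dimensionality of $V_i/S_i$ and $V_j/S_j$. The paper states this symmetrically (the induced compositions on the quotients vanish, so one factor must vanish), while you run the equivalent contrapositive dichotomy modeled on the $\B_{iji} = C_1 \cup C_2$ argument, spelling out the moment-map bookkeeping that the paper leaves implicit.
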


\begin{proof}
Consider a point $(B,V,S) \in \B_{\{i,j\}}$. The subspace $S$ is $B$-stable so the moment map condition implies that the induced maps
$$B_h B_{\overline{h}}: V_j/S_j \rightarrow V_j/S_j \text{ and }
B_{\overline{h}} B_h: V_i/S_i \rightarrow V_i/S_i$$  
are zero. Both $V_i/S_i$ and $V_j/S_j$ are one-dimensional, so at least one of the induced maps 
$$ B_h: V_i/S_i \rightarrow V_j/S_j \text{ or }
B_{\overline{h}}: V_j/S_j \rightarrow V_i/S_i $$
is zero. Thus the point $(B,V,S)$ is in either $\B_{ij}$ or $\B_{ji}$.
\end{proof}

The varieties $\B_{ij}$ and $\B_{ji}$ have the same dimension and the intersection $D_{ij} := \B_{ij} \cap \B_{ji}$ is one dimension smaller. In fact, $D_{ij} \subset \B_{ij}$ is cut out by a section of $\Hom(V_i/V'_i, V_j/V'_j \{1\})$ induced by $B_h$. Thus the standard exact sequence
\begin{equation*}\label{eq:def}
0 \rightarrow \O_{\B_{ij}}(-D_{ij}) \rightarrow \O_{\B_{\{i,j\}}} \rightarrow \O_{\B_{ji}} \rightarrow 0.
\end{equation*}
leads to the exact triangle
$$\O_{\B_{ji}}[-1] \rightarrow \O_{\B_{ij}} \otimes \det(V_i) \det(V'_i)^{-1} \det(V_j)^{-1} \det(V'_j) \{-1\} \rightarrow \O_{\B_{\{i,j\}}}$$
since $\O_{\B_{ij}}(-D_{ij}) \cong \O_{\B_{ij}} \otimes (V_i/V'_i) \otimes (V_j/V'_j)^{-1} \{-1\}$. 

Now tensor this triangle with the line bundle $\sL_{ji}\{1\}$ and use (\ref{eq:Lij}) to obtain
\begin{equation}\label{eq:def2}
\sE_j * \sE_i [-1]\{1\} \rightarrow \sE_i * \sE_j \rightarrow \O_{\B_{\{i,j\}}} \otimes \sL_{ji} \{1\}.
\end{equation}

Moreover, the first map in this triangle is non-zero since $\O_{\B_{\{i,j\}}} \otimes \sL_{ij}$ is simple, and therefore, by Lemma \ref{lem:homs1}, must equal $T_{ji}$ up to non-zero multiple. It follows that
$$\sE_{ji} \cong \Cone \left( \sE_j * \sE_i[-1] \rightarrow \sE_i * \sE_j \right) \cong \O_{\B_{\{i,j\}}} \otimes \sL_{ji}\{1\}.$$ 

Now we will write down a deformation of $\B_{\{i,j\}}$. Define $\widehat{\mathfrak{C}}_{\{i,j\}}$ to be the variety of triples $(B,V,S)$ with $(B,V) \in \mu^{-1}((\alpha_i+\alpha_j)^\perp)^s$ and $S \subset V$ satisfying the following:
\begin{itemize}
\item $\dim(S) = \dim(V) - e_i - e_j$
\item $S$ is $B$-stable,
\item $\im (B_{q(k)}) \subset S_k$ for all $k \in I$.
\end{itemize}

Let $\mathfrak{C}_{\{i,j\}} = \widehat{\mathfrak{C}}_{\{i,j\}}/GL(V)$. The difference between $\mathfrak{C}_{\{i,j\}} $ and $\B_{\{i,j\}}$ is that instead of demanding that $(B,V) \in \mu^{-1}(0)^s$ we demand that $(B,V) \in \mu^{-1}((\alpha_i+\alpha_j)^\perp)^s$. 

\begin{Lemma} 
$\mathfrak{C}_{\{i,j\}} \rightarrow (\alpha_i+\alpha_j)^\perp$ is a flat deformation of $\B_{\{i,j\}}$. 
\end{Lemma}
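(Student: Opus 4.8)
The plan is to establish flatness through the miracle-flatness criterion: the base $(\alpha_i+\alpha_j)^\perp$ is a linear subspace of $\h'$, hence smooth, so it suffices to show that the total space $\mathfrak{C}_{\{i,j\}}$ is Cohen--Macaulay and that every fibre of the map $p \colon \mathfrak{C}_{\{i,j\}} \to (\alpha_i+\alpha_j)^\perp$ (given by $(B,V,S) \mapsto \mu(B)$) has one and the same dimension $d := \dim \B_{\{i,j\}}$. The identification of the central fibre $p^{-1}(0)$ with $\B_{\{i,j\}}$ is immediate, since imposing $\mu(B)=0$ on $\widehat{\mathfrak{C}}_{\{i,j\}}$ recovers exactly the defining conditions of $\widehat{\B}_{\{i,j\}}$. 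Thus the entire problem reduces to a uniform dimension count for the fibres of $p$ together with a Cohen--Macaulayness statement.

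First I would realize $\mathfrak{C}_{\{i,j\}}$ as the zero locus of a section of a vector bundle over a smooth ambient space, in the style of Nakajima's construction used in Lemma \ref{th:canbundle}. Since $\mu$ is a submersion on the stable locus, $\mu^{-1}((\alpha_i+\alpha_j)^\perp)^s/\GL(V)$ is smooth (this is just the total space of the flat family $\N(\l)\to\h'$ restricted to the linear subspace $(\alpha_i+\alpha_j)^\perp$). Over this smooth base I form the relative flag bundle parametrizing the subspaces $S$ of the tautological bundle with $\dim(V/S)=e_i+e_j$; it is smooth. Inside it, the requirements that $S$ be $B$-stable and contain each $\im B_{q(k)}$ are cut out by a section $s$ of a $\Hom$-bundle $E$ built from the tautological sub- and quotient bundles, exactly as for the ordinary Hecke correspondences. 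If $s$ is a regular section, then $\mathfrak{C}_{\{i,j\}}$ is a local complete intersection, hence Cohen--Macaulay, of pure dimension $\dim E$'s complement; moreover $s$ restricts on each fibre $\mathcal{F}_\zeta$ to a section cutting out $p^{-1}(\zeta)$, so regularity is equivalent to every fibre having the expected dimension $d$. In this way Cohen--Macaulayness and equidimensionality are proven simultaneously, and the whole matter collapses to the single assertion that each fibre of $p$ has dimension exactly $d$.

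To carry out the fibre count I would treat the generic and special strata separately. For $\zeta\neq 0$ the key input is the moment-map relation: restricting $\mu_i(B)=\zeta_i\cdot\id$ and $\mu_j(B)=\zeta_j\cdot\id$ to the one-dimensional quotients $V_i/S_i$ and $V_j/S_j$, all terms except those coming from the edge between $i$ and $j$ vanish (since $S$ is $B$-stable and drops dimension only at $i$ and $j$), leaving induced maps $\beta\colon V_i/S_i\to V_j/S_j$ and $\overline{\beta}\colon V_j/S_j\to V_i/S_i$ with $\epsilon(h)\,\beta\overline{\beta}=\zeta_j$ and $-\epsilon(h)\,\overline{\beta}\beta=\zeta_i$. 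Taking traces gives $\zeta_i+\zeta_j=0$, which is precisely the equation of $(\alpha_i+\alpha_j)^\perp$, confirming that the family is the natural maximal one; and for generic $\zeta$ (where $\zeta_j\neq 0$) both $\beta,\overline{\beta}$ are isomorphisms, so the fibre is smooth and irreducible of dimension $d$. For $\zeta=0$ the fibre is the reducible variety $\B_{\{i,j\}}=\B_{ij}\cup\B_{ji}$, and I would invoke Lemma \ref{lem:lagint} to conclude that $\B_{ij}$ and $\B_{ji}$ are each transverse intersections of Lagrangians, hence each of the expected dimension $d$; thus the special fibre, though reducible, still has dimension exactly $d$.

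The hard part will be controlling the fibre dimension uniformly over \emph{all} of $(\alpha_i+\alpha_j)^\perp$, not merely at the origin and at generic points. By upper semicontinuity the fibre dimension can only jump upward, and such jumps would be concentrated on the root-hyperplane strata of $(\alpha_i+\alpha_j)^\perp$ where the deformed quiver varieties acquire extra singularities; the most dangerous stratum is the origin, where the fibre degenerates into two components. The crux is therefore to show that no stratum — in particular not the reducible central fibre — forces the dimension above $d$. Establishing this is exactly establishing that the section $s$ is regular over the whole ambient flag bundle, so the entire proof pivots on this codimension estimate; once it is in hand, Cohen--Macaulayness and equidimensionality follow at once and miracle flatness delivers the result.
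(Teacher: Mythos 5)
There is a genuine gap here, and it is one you name yourself without closing. Your strategy is miracle flatness: smooth base, Cohen--Macaulay total space, equidimensional fibres. As you observe, once $\fC_{\{i,j\}}$ is realized as the zero locus of a section $s$ of a bundle over a smooth relative flag bundle, the whole proof collapses to the single estimate that \emph{every} fibre of $p \colon \fC_{\{i,j\}} \to (\alpha_i+\alpha_j)^\perp$ has dimension exactly $d = \dim \B_{\{i,j\}}$. But you verify this only at two kinds of points. At the origin the Lagrangian-intersection argument (Lemma \ref{lem:lagint}) does give pure dimension $d$. At $\zeta$ off the root hyperplanes, however, your argument is that $\beta$ and $\overline{\beta}$ are isomorphisms, ``so the fibre is smooth and irreducible of dimension $d$'' --- this is a non sequitur: invertibility of two maps between one-dimensional quotients gives no dimension count at all. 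In the paper this count is the bulk of the proof: for $t \neq 0$ one shows $S_j$ can be recovered from the remaining data (surjectivity of a map onto $S_j$ using the moment map condition at $j$), so the fibre injects into $\M(\l) \times \G(v_i-1,v_i)$, and then one bounds the codimension of the image by exhibiting it inside the vanishing locus of a bundle map $\coker(f) \to V_i/S_i$ with $f$ injective. Finally, the intermediate strata --- nonzero $\zeta$ with $\la \zeta, \alpha_i + \alpha_j \ra = 0$ lying on other root hyperplanes, in particular points where $\zeta_i = \zeta_j = 0$ but $\zeta \neq 0$, at which the recovery-of-$S_j$ mechanism fails exactly as it does at the origin --- are explicitly deferred in your last paragraph. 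Since miracle flatness requires the bound at every point of the base, the pivotal step of your proof is missing, and it is not a routine one.

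It is worth seeing how the paper sidesteps precisely this difficulty, because the two approaches are genuinely different. Rather than bounding all fibres from above, the paper takes the closure of the family $\fC^o_{\{i,j\}}$ over the generic locus and proves only two things: a \emph{lower} bound $\dim \fC^b \geq d$ for generic $b$ (via the injection and cokernel argument sketched above), and the fact that this closure contains both components of the central fibre $\B_{\{i,j\}}$ (forgetting $S_j$ is an isomorphism on generic fibres but collapses one component of $\B_{\{i,j\}}$, which forces the other component into the closure; forgetting $S_i$ handles the first component). Since the closure's central fibre sits inside $\B_{\{i,j\}}$, these two facts pin down all dimensions and give flatness without any information about fibres over the intermediate strata. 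That is exactly the information your route cannot do without. To rescue your proposal you would have to prove the upper bound $\dim p^{-1}(\zeta) \leq d$ on every stratum (equivalently, regularity of $s$ over the whole ambient space), and nothing in your outline indicates how to do this away from the origin and the generic locus.
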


\begin{proof}
Let $\fC^o_{\{i,j\}} \subset \fC_{\{i,j\}}$ denote the open subset consisting of the fibres $\fC^b := \fC^b_{\{i,j\}}$ over $b \ne 0 \in (\alpha_i+\alpha_j)^\perp$ where $b$ does not lie on any root hyperplane. Our aim is to show that the closure of $\fC^o_{\{i,j\}}$ contains $\B_{\{i,j\}}$ and that the dimension of the general fibre is at least $\dim \B_{\{i,j\}}$. This is sufficient to conclude that the closure of $\fC^o_{\{i,j\}}$ is a flat deformation of $\B_{\{i,j\}}$. The fact that this closure is actually $\fC_{\{i,j\}}$ is not hard to see using an argument along the same lines. 

We first show that the dimension of $\fC^b$ is at least that of $\B_{\{i,j\}}$. Since $\B_{\{i,j\}}$ is a Lagrangian inside the product of $\M(\l) \times \M(\l+\alpha_i+\alpha_j)$ a straightforward calculation shows that 
$$\dim \B_{\{i,j\}} = \dim \M(\l) - w_i - w_j + v_i + v_j - 1 - \sum_{in(h)=i, out(h) \ne j} v_{out(h)} - \sum_{in(h)=j, out(h) \ne i} v_{out(h)}.$$
Looking at $\fC^b$ we can assume it has generic moment map conditions at each vertex except for vertices $i$ and $j$ where the conditions are given by some nonzero $t$ and $-t$ respectively. We first note that forgetting $S_j$ from $\fC^b$ does not lose any information. This is because the $S_j$ can be recovered as the image of 
$$\bigoplus_{in(h)=j, out(h) \ne i}  B_h \oplus B_{h_0}|_{S_i} \oplus B_{q(j)}: \bigoplus_{in(h)=j, out(h) \ne i} V_{out(h)} \oplus S_i \oplus W_j \rightarrow V_j$$
where $h_0 \in H$ denotes the arrow from vertex $i$ to $j$. Here we use the moment map condition at vertex $j$ and that $t \ne 0$ to conclude that this map surjects onto $S_j \subset V_j$. Thus we get an injective map 
$$\pi: \fC^b \rightarrow \M(\l) \times \G(v_i-1, v_i)$$
where $\G(v_i-1,v_i)$ parametrizes the possible choices of $S_i \subset V_i$. Thus it suffices to show that the codimension of the image of $\pi$ is at most 
\begin{equation}\label{eq:5}
w_i + w_j - v_j + \sum_{in(h)=i, out(h) \ne j} v_{out(h)} + \sum_{in(h)=j, out(h) \ne i} v_{out(h)}.
\end{equation}

Now the image of $\pi$ is carved out by the conditions that all the neighbours of $V_i$ (except for $V_j$) maps to $S_i$ and all the neighbours of $V_j$ (except for $V_i$) map to $S_i$ after composing with $B_{\bar{h_0}}: V_j \rightarrow V_i$. 

We consider the natural map of vector bundles 
$$V_j \xrightarrow{f} W_i \oplus W_j \bigoplus_{in(h)=i, out(h) \ne j} V_{out(h)} \bigoplus_{in(h)=j, out(h) \ne i} V_{out(h)} \xrightarrow{g} V_i/S_i.$$
The maps $f$ and $g$ are given by 
$$(B_{p(i)} B_{\bar{h_0}}) \oplus B_{p(j)} \bigoplus_{in(h)=i, out(h) \ne j} (B_{\bar{h}} B_{\bar{h_0}}) \bigoplus_{in(h)=j, out(h) \ne i} B_h$$
and
$$B_{q(i)} \oplus (B_{\bar{h}} B_{q(j)}) \bigoplus_{in(h)=i, out(h) \ne j} \epsilon(h) B_h \bigoplus_{in(h)=j, out(h) \ne i} \epsilon(h) (B_{\bar{h_0}} B_{\bar{h}}).$$
The stability condition says that $V_i$ embeds into $\oplus_l W_l$ by using all possible maps. This in turns implies that $f$ is injective. Moreover, by construction, $g$ vanishes precisely over the image $\pi(\fC^b)$. Finally, a careful calculation using the moment map conditions shows that the composition $g \circ f$ is zero (this is where we use that the moment map conditions are given by $t$ and $-t$ at $i$ and $j$). 

Thus we get a map $\coker(f) \rightarrow V_i/S_i$ which vanishes along $\pi(\fC^b)$. Now the dimension of $\coker(f)$ (since $f$ is injective) is precisely equal to (\ref{eq:5}). This means that the codimension of $\pi(\fC^b) \subset \M(\l) \times \G(v_i-1, v_i)$ is at most that. Thus $\dim \fC^b \ge \dim \B_{\{i,j\}}$. 

It remains to show that the closure of $\fC^o_{\{i,j\}}$ contains $\B_{\{i,j\}}$. Now the map which forgets $S_j$ is an isomorphism on the general fibre $\fC^b$ but collapses one of the components of $\B_{\{i,j\}}$. This means that the other remaining component must be in the closure (otherwise the dimension of the central fibre of the closure of $\fC^o_{\{i,j\}}$ would be strictly smaller than $\dim (\fC^b)$). On the other hand, forgetting $S_i$ shows that this first component must also be in the closure. This means all of $\B_{\{i,j\}}$ must be in the closure. 
\end{proof}

We then set $\tsE_{ij} = \O_{\fC_{\{i,j\}}} \otimes \sL_{ij}\{1\}$ where on the right side, abusing notation slightly, $\sL_{ij}$ denotes the line bundle as before but over the deformation. It follows immediately from the Lemma above that the restriction of $\tsE_{ij}$ to the fibre over $0 \in (\alpha_i + \alpha_j)^\perp$ is $\sE_{ij}$.

\section{Affine braid group actions}\label{sec:affinebraid}

In this section we describe an affine braid group action on the {\it non-equivariant} categories $\oplus_\l D(\M(\l))$. 

\subsection{Braid group action}
Associated to our graph $ \Gamma $, we have a braid group $B_\Gamma$.  This group has generator $ T_i $ for $ i \in I $ and relations 
\begin{align*}
T_i T_j T_i &= T_j T_i T_j \text{ if } \langle \alpha_i, \alpha_j \rangle = -1 \\
T_i T_j &= T_j T_i \text{ if } \langle \alpha_i, \alpha_j \rangle = 0 
\end{align*}

In \cite{ck3}, we showed that given a geometric categorical $ \g $ action with weight space varieties $ \{ Y(\l) \} $, one obtains an action of $B_\Gamma $ on the categories $ D(Y(\l)) $.  The generators of $ B_\Gamma $ act by certain complexes originally defined by Chuang-Rouquier \cite{CR}.  Applying this result in our situation, we obtain the following result.

\begin{Theorem}
There is an action of $ B_\Gamma $ on $ \oplus D(\M(\l)) $.  The generator $ T_i $ acts by a functor from $ D(\M(\l)) \rightarrow D(\M(s_i \lambda)) $ and these generators satisfy the braid relations.
\end{Theorem}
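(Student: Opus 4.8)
The plan is to obtain this statement as an immediate corollary of the two results already in hand. First I would apply Theorem \ref{thm:main}, which asserts that the varieties $\M(\l)$, together with the Hecke kernels $\sE_i^{(r)}(\l)$, $\sF_i^{(r)}(\l)$ and the deformations $\N(\l) \rightarrow \h'$, form a geometric categorical $\g$ action. Once this is established, Theorem \ref{th:geomtobraid} applies directly: for any geometric categorical $\g$ action there is an action of $B_\Gamma$ on $\oplus_\l D(Y(\l))$ in which the generator $T_i$ is realized by the Chuang-Rouquier complex of Fourier-Mukai kernels and sends $D(\M(\l))$ to $D(\M(s_i\l))$, compatibly with the action of the simple reflection $s_i$ on weights. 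This already gives the theorem in the $\C^\times$-equivariant setting, and in particular shows that the generators $T_i$ satisfy the braid relations.

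It remains to transfer this action to the non-equivariant categories $\oplus_\l D(\M(\l))$ appearing in the statement. For this I would invoke the forgetful functor $D^{\C^\times}(\M(\l)) \rightarrow D(\M(\l))$, which is exact and commutes with derived pullback, derived pushforward along proper maps, and derived tensor product; consequently it intertwines the convolution product $*$ and the formation of mapping cones. Since each $T_i$ is built from the kernels $\sE_i$ and $\sF_i$ by iterated convolutions and cones, its image under this functor is the corresponding non-equivariant kernel. Each braid-relation isomorphism holds at the level of equivariant kernels, and is therefore carried by the forgetful functor to the analogous isomorphism of non-equivariant kernels, so the braid relations persist after forgetting the equivariant structure.

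I expect no essential obstacle here, as the result is genuinely a combination of Theorems \ref{thm:main} and \ref{th:geomtobraid}. The only point that warrants a word of justification is the compatibility of the forgetful functor with the convolution product and with cones, which is standard; with that in place the non-equivariant braid action follows formally.
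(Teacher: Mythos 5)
Your proposal is correct and its core is exactly the paper's argument: the theorem is obtained immediately by combining Theorem \ref{thm:main} with Theorem \ref{th:geomtobraid}, with $T_i$ acting by the Chuang--Rouquier complexes. Your second paragraph is unnecessary, since in the paper's conventions $D(\M(\l))$ in this statement denotes the $\C^\times$-equivariant derived category (the paper notes explicitly that this theorem holds at the equivariant level, passing to non-equivariant categories only afterwards for the \emph{affine} braid group extension); that said, your forgetful-functor argument is sound and would also yield the non-equivariant version.
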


The above theorem holds at the level of equivariant derived categories.  We will now upgrade this action to an action of the \emph{affine} braid group but only after passing to the non-equivariant setting. 

\subsection{Affine braid group action}
We use the following presentation of the (extended) affine braid group given by Riche in \cite{Ric}:
\begin{itemize}
\item generators: $T_i$ and $\Theta_i$ ($i \in I$)
\item relations:
\begin{enumerate}
\item $\T_i \T_j =  \T_j \T_i$ if $\la \alpha_i, \alpha_j \ra = 0$ and $\T_i \T_j \T_i = \T_j \T_i \T_j$ if $\la \alpha_i, \alpha_j \ra = -1$
\item $\T_i \Theta_j = \Theta_j \T_i$ if $i \ne j$
\item $\T_i = \prod_{j: \la \alpha_i, \alpha_j \ra = -1} \Theta_j^{-1} \Theta_i \T_i^{-1} \Theta_i$ 
\end{enumerate}
\end{itemize}
\begin{Remark}
Relation (iii) above is equivalent but not identical to relation (4) on page 132 of \cite{Ric}. 
\end{Remark}
The action of each $\T_i$ is the same as above. We define $\Theta_i: D(\M(\l)) \rightarrow D(\M(\l))$ as the tensor product with the line bundle $\det(V_i)$ or, equivalently, the functor induced by the kernel $\theta_i := \Delta_* \det(V_i)$.

\begin{Theorem}\label{thm:affinebraid} The functors $\T_i$ and $\Theta_i$ defined above generate an affine braid group action on the {\emph{non-equivariant}} derived categories $\oplus_\l D(\M(\l))$. In the equivariant setting relations (i) and (ii) still hold but relation (iii) becomes 
$$\T_i(\l) = \prod_{j: \la \alpha_i, \alpha_j \ra = -1} \Theta_j^{-1} \Theta_i \T_i^{-1} \Theta_i \{ \la \l, \alpha_i \ra \}.$$
\end{Theorem}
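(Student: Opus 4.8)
Relations (i) and (ii) are quickly disposed of. Relation (i) is exactly the braid relation for the $\T_i$, which is the content of the braid group action already produced in Theorem \ref{th:geomtobraid} applied to our geometric categorical $\g$ action, and it holds equivariantly. For relation (ii) with $i \ne j$, recall that $\Theta_j$ is convolution with $\Delta_* \det(V_j)$, while the kernel $\sT_i$ inducing $\T_i$ is the Chuang--Rouquier complex built from convolutions of the $\sE_i^{(r)}$ and $\sF_i^{(r)}$. Every term of $\sT_i$ is supported (up to a line bundle twist) on a subvariety of $\M(\l) \times \M(\l')$ on which only the data at vertex $i$ varies, so there $V_j \cong V_j'$ for $j \ne i$ because the Hecke modification only alters the space at vertex $i$. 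Consequently $p_1^* \det(V_j) \cong p_2^* \det(V_j)$ on the support of each term, so convolving $\sT_i$ with $\Delta_* \det(V_j)$ on the left agrees with convolving on the right, termwise and compatibly with the differentials. This yields $\Theta_j \circ \T_i \cong \T_i \circ \Theta_j$, equivariantly.

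The real work is relation (iii). First I would reformulate it. Since the $\Theta$'s commute with one another, and since $\prod_{j : \la \alpha_i,\alpha_j\ra = -1} \Theta_j^{-1}$ commutes with $\T_i^{\pm 1}$ by relation (ii), relation (iii) is equivalent, on the weight space $\l$, to the single statement
\[
\T_i \;\cong\; \Theta_i \circ \Big( \prod_{j : \la \alpha_i, \alpha_j \ra = -1} \Theta_j^{-1} \Big) \circ \T_i^{-1} \circ \Theta_i ,
\]
that is, to the assertion that $\T_i$ and its inverse $\T_i^{-1}$ (both regarded as functors $D(\M(\l)) \to D(\M(s_i \l))$) differ only by tensoring with explicit tautological line bundles. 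Since $\T_i$ is an equivalence (Theorem \ref{th:geomtobraid}), we have $\T_i^{-1} = \T_i^R$, whose kernel is $(\sT_i)_R$. At the level of kernels the reformulation therefore becomes a \emph{self-duality} of the Chuang--Rouquier complex: $(\sT_i)_R$ is isomorphic to $\sT_i$ tensored by explicit powers of $p_1^* \det(V_i)$, $p_2^* \det(V_i)$ and the $p_2^* \det(V_j)$ for neighbours $j$ of $i$, with \emph{no net homological shift} (as $\T_i$ is invertible) and an equivariant shift $\{\la \l, \alpha_i \ra\}$ to be pinned down below.

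To prove this self-duality I would reduce to the $\sl_2$ case exactly as in Section \ref{sec:sl2rels}. All of $\T_i$, $\Theta_i$ and the $\Theta_j$ are governed by the $\sl_2$ data at the vertex $i$ together with the tautological line bundles, so passing to the modifications $\hM_i, \tM_i$ and invoking the formalism of compatible kernels reduces the claim to the corresponding self-duality of the $\sl_2$ reflection kernel on the cotangent bundles $T^\star \G(v_i, N_i)$. There the structure of the reflection functor and its inverse is controlled by \cite{ckl3}, and the duality is an explicit computation with the tautological sub- and quotient bundles. Under the isomorphism $\tM_i \cong T^\star \G(v_i, N_i) \times \mathrm{M}_i'$, the ambient determinant $\det(\C^{N_i})$ produced by this $\sl_2$ computation decomposes $P_i$-equivariantly as $\det(W_i) \otimes \prod_{j : \la \alpha_i, \alpha_j \ra = -1} \det(V_j)$ (the graph being simply laced, with no multiple edges), which is precisely how the factor $\prod_{j} \det(V_j)^{-1}$ reappears after descending along the principal $P_i$-bundle $\hM_i \to \M(\l)$.

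The main obstacle is this $\sl_2$ computation together with the bookkeeping of the $\C^\times$-weights. Establishing $(\sT_i)_R \cong \sT_i \otimes \sL_i$ with the correct bundle $\sL_i$ requires controlling the self-duality of the reflection complex on $T^\star \G(v_i, N_i)$, and it is in tracking the equivariant weights through this computation --- and through the decomposition of $\det(\C^{N_i})$ and the canonical bundle formulas of Lemmas \ref{th:canM} and \ref{th:canbundle} --- that the precise shift $\{\la \l, \alpha_i \ra\}$ is produced. Once the equivariant identity is in hand, forgetting the $\C^\times$-equivariance annihilates every $\{\cdot\}$ shift, so relation (iii) holds on the nose in the non-equivariant categories $\oplus_\l D(\M(\l))$; this is exactly why the affine braid group acts only after passing to the non-equivariant setting.
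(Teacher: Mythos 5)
Your handling of relations (i) and (ii) coincides with the paper's: (i) is quoted from Theorem \ref{th:geomtobraid}, and (ii) follows because $V_j \cong V'_j$ on the Hecke correspondences at vertex $i$ when $j \ne i$, so $\theta_j$ commutes termwise with the Chuang--Rouquier complex $\sT_i$. Your strategy for (iii) is also the paper's: rewrite it as a kernel-level identity expressing $\sT_i(\l)$ as a line-bundle twist of the adjoint kernel $\sT_i(s_i\l)_L$ (or $_R$; they agree since $\T_i$ is an equivalence), reduce to the $\sl_2$ case on $T^\star \G(v_i,N_i)$ via the compatible-kernel formalism of section \ref{se:compatible}, recover the factors $\prod_j \Theta_j^{-1}$ from the $P_i$-equivariant decomposition $\det(\C^{N_i}) \cong \det(W_i) \otimes \prod_j \det(V_j)$ after descending along $\hM_i(\l) \rightarrow \M(\l)$, and note that forgetting $\C^\times$-equivariance kills the shift $\{\la \l, \alpha_i \ra\}$.

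The gap is at the central $\sl_2$ step, which you describe as ``an explicit computation with the tautological sub- and quotient bundles'' that is ``controlled by \cite{ckl3}.'' Neither is accurate. What the argument needs is Lemma \ref{lem:T_k}, which is the conjunction of two statements: (b) $\sT(N-k,N) \cong \sT(k,N) \otimes L^{N-2k}$, where $L = \det(V)\det(V')\det(\C^N)^\vee$, and (a) $\sT(k,N)_L \cong \sT(k,N) \otimes L^{N-2k-1}\{-2k\}$. Statement (b) is indeed a termwise computation: the terms $\sF^{(l)} * \sE^{(N-2k+l)}$ and $\sF^{(N-2k+l)} * \sE^{(l)}$ of the two reflection complexes are pushforwards of line bundles from the same correspondence, and these differ by $\pi_{13}^*(L^{N-2k})$. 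But statement (a) --- the genuine ``self-duality'' on which your reformulation rests --- cannot be proved this way: taking the left adjoint of a convolution reverses every differential, so $\sT(k,N)_L$ is the convolution of a complex with comparable terms but with all arrows pointing the other way, and, as the paper itself stresses in the remark following Lemma \ref{lem:T_k}, there is no a priori reason for it to agree with $\sT(k,N)$ up to a line bundle at all. The paper imports (a) from Remark 5.4 of \cite{C}, which rests on identifying the convolved reflection kernel with an honest sheaf (the stratified-flop kernel); nothing of this kind is proved in \cite{ckl3}, which only constructs the equivalences. Without this external input your proof has a hole precisely where the theorem is hardest. A secondary omission: Lemma \ref{th:rightinverse} only detects isomorphisms of cohomology sheaves, so to descend the identity from $\hM_i$ to $\M(\l)$ one must first compose with inverse kernels so that one side becomes the sheaf $\O_\Delta\{-\la \l, \alpha_i\ra\}$; applied directly to the two-sided complexes $\sT_i$ and its adjoint, the compatibility formalism proves nothing.
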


In particular, if $\l$ is the zero weight space, meaning that $\la \l, \alpha_i \ra = 0$ for all $i \in I$, then the affine braid group acts on the {\emph{equivariant}} derived category $D(\M(\l))$.

\begin{proof} 
As discussed above, the first relation follows from \cite{ck3}. The second relation follows from the simple observation that on $\M(\l) \times \M(\l+r\alpha_i)$ we have
\begin{eqnarray*}
\sE_i^{(r)}(\l) \otimes \pi_1^* \det(V_j) \cong \sE_i^{(r)}(\l) \otimes \pi_2^* \det(V_j) \text{ and }
\sF_i^{(r)}(\l) \otimes \pi_1^* \det(V_j) \cong \sF_i^{(r)}(\l) \otimes \pi_2^* \det(V_j)
\end{eqnarray*}
if $i \ne j$ because $V_j = V_j'$ on $\B_i^{(r)}$. This means that
$$\sF_i^{(\la \l, \alpha_i \ra + l)} * \sE_i^{(l)}(\l) \otimes \pi_1^* \det(V_j)
\cong \sF_i^{(\la \l, \alpha_i \ra + l)} * \sE_i^{(l)}(\l) \otimes \pi_2^* \det(V_j).$$
and implies that 
$$\theta_j * \sF_i^{(\la \l, \alpha_i \ra + l)} * \sE_i^{(l)} \cong \sF_i^{(\la \l, \alpha_i \ra + l)} * \sE_i^{(l)} * \theta_j.$$
Since $\T_i$ is induced by the kernel $\sT_i$ which is the cone of the complex
$$\dots \rightarrow \sF_i^{(\la \l, \alpha_i \ra + l)} * \sE_i^{(l)} [-l] \rightarrow \dots \rightarrow \sF_i^{(\la \l, \alpha_i \ra +1)} * \sE_i [-1] \rightarrow \sF_i^{(\la \l, \alpha_i \ra)}$$
it follows that $\theta_j * \sT_i \cong \sT_i * \theta_j$ which proves the second relation.

To prove the third relation we reduce to the $\sl_2$ case of cotangent bundles to Grassmannians as in the proofs above. First consider the $\sl_2$ case. Here we have quiver varieties $T^* \G(k,N)$ and $T^* \G(N-k,N)$ and equivalences 
$$\T(k,N): D(T^* \G(k,N)) \rightarrow D(T^* \G(N-k,N)) \text{ and } \T(N-k,N): D(T^* \G(N-k,N)) \rightarrow  D(T^* \G(k,N))$$
induced by kernels $\sT(k,N)$ and $\sT(N-k,N)$ respectively.

\begin{Lemma}\label{lem:T_k} As sheaves on $T^* \G(k,N) \times T^* \G(N-k,N)$ we have 
$$\sT(k,N)_L \cong \sT(k,N) \otimes L^{N-2k-1} \{-2k\} \text{ and } \sT(N-k,N) \cong \sT(k,N) \otimes L^{N-2k} $$
where $L = \det(V) \det(V') \det(\C^N)^\vee$. Subsequently we have 
$$\sT(N-k,N) \cong \sT(k,N)_L \otimes \det(V) \det(V') \det(\C^N)^\vee \{2k\}.$$
\end{Lemma}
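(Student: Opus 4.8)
The plan is to reduce all three claims to a single line-bundle computation on an explicit correspondence. Recall from \cite{ckl3} that on cotangent bundles to Grassmannians the Rickard kernel $\sT(k,N)$ is, up to an equivariant and cohomological shift, the structure sheaf $\O_Z$ of a correspondence $Z \subset T^\star\G(k,N) \times T^\star\G(N-k,N)$ tensored with an explicit line bundle $\sM$; in particular it is an honest sheaf, which is what lets the lemma be stated at the level of sheaves. The same variety $Z$, read with its two factors interchanged, supports $\sT(N-k,N)$. Before anything else, note that the third isomorphism is formal: tensoring the first by $L\{2k\}$ gives $\sT(k,N)_L \otimes L\{2k\} \cong \sT(k,N) \otimes L^{N-2k}$, whose right-hand side equals $\sT(N-k,N)$ by the second isomorphism, while $L = \det(V)\det(V')\det(\C^N)^\vee$ is precisely the line bundle appearing in the stated form. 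So it suffices to prove the first two isomorphisms.

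For the second isomorphism I would compare the two kernels on their common support $Z$. Both are $\O_Z$ twisted by line bundles built from the normalizing factors $\det(V)^r$, $\det(\C^N/V')^{-r}$, $\det(V'/V)^{\bullet}$ that enter the definitions of $\sE^{(r)}, \sF^{(r)}$ in section \ref{sec:action}. The only asymmetry between the two Grassmannians is the interchange $V \leftrightarrow V'$ together with $\C^N/V' \leftrightarrow V$; carrying this interchange through the two normalizations produces exactly the global twist $L^{N-2k}$. Concretely, I would restrict the two line bundles to $Z$ and check they differ by $L^{N-2k}|_Z$; the identifications then match automatically because the relevant $\Hom$ spaces are one-dimensional, by uniqueness statements of the kind established in Lemma \ref{lem:homs1}.

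For the first isomorphism I would compute the left adjoint directly from $\sT(k,N)_L \cong \sT(k,N)^\vee \otimes \omega[\dim]$ (section \ref{sec:notation}). Writing $\sT(k,N) = \O_Z \otimes \sM$ with $Z$ smooth of codimension $c = 2k(N-k)$, the derived dual is $\O_Z \otimes \det(N_Z) \otimes \sM^{-1}[-c]$, where $N_Z$ is the normal bundle of $Z$. The shift $[\dim T^\star\G(N-k,N)] = [c]$ in the adjoint formula cancels $[-c]$, so $\sT(k,N)_L$ is again a sheaf, namely $\O_Z$ twisted by $\det(N_Z)\otimes\sM^{-1}\otimes \omega|_Z$. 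The canonical bundles of $T^\star\G(k,N)$ and $T^\star\G(N-k,N)$ are trivial away from the $\C^\times$-grading, since these varieties are symplectic (the analogue of Lemma \ref{th:canM}), and $\det(N_Z)$ can be computed either directly or via the Lagrangian property $N_Z \cong \Omega^1_Z$. The claim is then that $\det(N_Z)\otimes \sM^{-1}\otimes\omega|_Z \cong \sM \otimes L^{N-2k-1}\{-2k\}$.

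The main obstacle is precisely this last identification: assembling $\det(N_Z)$, the self-cancellation of $\sM^{-1}$ against $\sM$, and the two symplectic canonical bundles into the single power $L^{N-2k-1}$, and extracting the equivariant weight $\{-2k\}$ from the weight $2$ of the symplectic form together with the dimension count $\dim T^\star\G(k,N) = 2k(N-k)$ and $\codim Z$. The structural inputs — smoothness of $Z$, triviality of the canonical bundles up to $\C^\times$-weight, and the one-dimensionality of the relevant morphism spaces — make the comparison rigid; what remains is a finite but careful $\det$-bookkeeping exercise, entirely parallel to the canonical-bundle computation already carried out in Lemma \ref{th:canbundle}.
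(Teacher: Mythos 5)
Your derivation of the third isomorphism from the first two is correct and matches the paper's ``subsequently.'' The problem is everything else: both of your main arguments rest on the structural claim that $\sT(k,N)$ is, up to shift, $\O_Z \otimes \sM$ for a \emph{smooth} correspondence $Z$, which you attribute to \cite{ckl3}. That claim is not established there: \cite{ckl3} constructs $\sT(k,N)$ as the convolution of the Rickard complex $\cdots \rightarrow \sF^{(N-2k+1)} * \sE \rightarrow \sF^{(N-2k)}$ and proves it is an equivalence, but it does not identify the convolution as a sheaf, let alone as a line bundle on a smooth variety. Identifying that convolution explicitly is the content of the separate paper \cite{C}, which is exactly what the present paper cites (Remark 5.4 of \cite{C}) to obtain the first isomorphism; moreover, for $2 \le k \le N-2$ the support of $\sT(k,N)$ is the reducible, singular fibre product of the two cotangent bundles over their common affinization, not a smooth half-dimensional subvariety. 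This breaks your computation of $\sT(k,N)_L$: the formula $\O_Z^\vee \cong \det(N_Z)[-c]$ needs $Z$ smooth (or at least Gorenstein), and fails when the support is reducible, where the dualizing sheaf need not be a line bundle. The paper's own remark immediately after the lemma flags precisely this point: $\sT(k,N)_L$ is the convolution of the \emph{reversed} complex, so there is no a priori reason it should differ from $\sT(k,N)$ by a line bundle at all; that is why the first isomorphism is quoted from \cite{C} rather than proved by duality bookkeeping.

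Your argument for the second isomorphism has the same flaw, since comparing line bundles on ``the common support $Z$'' of the two kernels presupposes the structural description; also, the appeal to Lemma \ref{lem:homs1} is misplaced, as that lemma concerns Hom spaces between products $\sE_i^{(b)} * \sE_j^{(a)}$, not endomorphisms of Rickard kernels. The paper's proof of the second isomorphism needs no information about the convolutions at all: it compares the two Rickard complexes term by term, proving $\sF^{(l)}(k,N) * \sE^{(N-2k+l)}(N-k,N) \cong \sF^{(N-2k+l)}(N-k,N) * \sE^{(l)}(k,N) \otimes L^{N-2k}$ by realizing both sides as $\pi_{13*}$ of explicit line bundles on the same triple correspondence, which differ by $\pi_{13}^*(L^{N-2k})$, and then applying the projection formula; the twist then pulls out of the entire complex. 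If you want a proof independent of \cite{C}, that termwise route is the viable one; your route would require first proving the sheaf-theoretic description of $\sT(k,N)$, which is a substantially harder theorem than the lemma itself.
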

\begin{proof}
For convenience suppose $k \le N/2$. The first isomorphism is a consequence of \cite{C} (see Remark 5.4). To see the second isomorphism recall that $\sT(k,N)$ is the convolution of the complex
$$ \dots \rightarrow \sF^{(N-2k+2)} * \sE^{(2)} \rightarrow \sF^{(N-2k+1)} * \sE \rightarrow \sF^{(N-2k)}$$
while $\sT(N-k,N)$ is the convolution of the complex
$$ \dots \rightarrow \sF^{(2)} * \sE^{(N-2k+2)} \rightarrow \sF * \sE^{(N-2k+1)} \rightarrow \sE^{(N-2k)}.$$
So it suffices to show that term by term we have 
\begin{equation}\label{eq:3}
\sF^{(l)}(k,N) * \sE^{(N-2k+l)}(N-k,N) \cong \sF^{(N-2k+l)}(N-k,N) * \sE^{(l)}(k,N) \otimes L^{N-2k}.
\end{equation}
This is easy to check using 
\begin{itemize}
\item $\sE^{(N-2k+l)}(N-k,N) \cong \O_{\B^{(N-2k+l)}(N-k,N)} \otimes L^{N-2k+l} \{(N-2k+l)(k-l)\}$
\item $\sF^{(l)}(k,N) \cong \O_{\B^{(l)}(k,N)} \otimes \det(V'/V)^{N-2k+l} \{l(N-k)\}$ 
\item $\sE^{(l)}(k,N) \cong \O_{\B^{(l)}(k,N)} \otimes L^l \{l(k-l)\}$
\item $\sF^{(N-2k+l)}(N-k,N) \cong \O_{\B^{(N-2k+l)}(N-k,N)} \otimes \det(V'/V)^{l} \{(N-2k+l)k\}.$ 
\end{itemize}
More precisely, both sides of (\ref{eq:3}) are the pushforward $\pi_{13*}$ from the same variety 
$$\pi_{12}^{-1} \B^{(N-2k+l)}(N-k,N)  \cap \pi_{23}^{-1} \B^{(l)}(k,N) \subset T^* \G(N-k,N) \times T^* \G(k-l,N) \times T^* \G(k,N).$$
Moreover, it is straightforward to see that $\sF^{(l)} * \sE^{(N-2k+l)}$ and $\sF^{(N-2k+l)} * \sE^{(l)}$ are the pushforwards of the line bundles 
$$\det(V)^{N-2k+l} \det(V'')^{N-2k+l} \{k(N-k)-(k-l)^2\} \text{ and } \det(V)^l \det(V'')^{l} \{k(N-k)-(k-l)^2\}$$
respectively. Since these line bundles differ by $\pi_{13}^*(L^{N-2k})$ the result follows from the projection formula. 
\end{proof}
\begin{Remark}
It is the first isomorphism in Lemma \ref{lem:T_k}, more so than the second one, which should be considered a bit surprising. This is because the kernel for $\sT(k,N)_L$ is the convolution of a complex whose terms are similar to those of $\sT(k,N)$, but where all the maps are in the opposite direction. Thus, in general, there is no reason to expect that the two kernels differ only by tensoring with a line bundle. 
\end{Remark}

Since the vector bundle $\C^N$ is trivial (even $\C^\times$-equivariantly), we obtain the following result.  (Recall that under the isomorphism between $T^\star \G(k,N)$ and the corresponding $\sl_2$ quiver variety, the vector bundle $V$ corresponds to the shifted tautological bundle $V_i \{-1\}$).)

\begin{Corollary} When the $\M(\l)$ are $\sl_2$ quiver varieties, we have 
$$\T(N-k,N) \{N-2k\} \cong \Theta \circ \T(k,N)^{-1} \circ \Theta$$, 
where $\Theta$ is induced by $\theta := \Delta_* \det(V_i)$. 
\end{Corollary}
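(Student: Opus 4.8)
The plan is to prove the isomorphism at the level of Fourier--Mukai kernels, reducing everything to the two isomorphisms of Lemma \ref{lem:T_k}. Since $\T(k,N)$ is an autoequivalence, its inverse is computed by its left adjoint, so $\T(k,N)^{-1}$ is induced by the kernel $\sT(k,N)_L$. As $\Theta$ is induced by $\theta = \Delta_* \det(V_i)$, the composite $\Theta \circ \T(k,N)^{-1} \circ \Theta$ is induced by the convolution $\theta * \sT(k,N)_L * \theta$, where the two copies of $\theta$ live on $T^\star \G(k,N) \times T^\star \G(k,N)$ and $T^\star \G(N-k,N) \times T^\star \G(N-k,N)$ respectively.

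The key mechanical step is that convolving a kernel with $\Delta_* \det(V_i)$ simply tensors it by the pullback of $\det(V_i)$ from the appropriate factor. Writing $\pi_1, \pi_2$ for the two projections off $T^\star \G(N-k,N) \times T^\star \G(k,N)$, this gives
\begin{equation*}
\theta * \sT(k,N)_L * \theta \cong \sT(k,N)_L \otimes \pi_1^* \det(V_i) \otimes \pi_2^* \det(V_i).
\end{equation*}
I would then convert $\det(V_i)$ into the Grassmannian tautological determinant using $V \cong V_i\{-1\}$: on $T^\star \G(m,N)$ one has $\det(V_i) \cong \det(V)\{m\}$, with $m = N-k$ on the $\G(N-k,N)$ factor and $m = k$ on the $\G(k,N)$ factor. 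The product of the two pullbacks is then the product of the two Grassmannian tautological determinants, shifted by $\{(N-k) + k\} = \{N\}$; since $\det(\C^N)^\vee$ is equivariantly trivial, this product is exactly $L\{N\}$, so that $\theta * \sT(k,N)_L * \theta \cong \sT(k,N)_L \otimes L\{N\}$.

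Finally I would compare with Lemma \ref{lem:T_k}, which gives $\sT(N-k,N) \cong \sT(k,N)_L \otimes L\{2k\}$ and therefore $\sT(N-k,N)\{N-2k\} \cong \sT(k,N)_L \otimes L\{N\}$. This matches the kernel computed above, and passing back to functors yields the Corollary. The main obstacle is the bookkeeping of the $\C^\times$-equivariant shifts: one must track the shift $\{m\}$ arising from $V \cong V_i\{-1\}$ separately on each factor and verify that the two contributions add to $\{N\}$, since it is precisely this combined shift that produces the stated $\{N-2k\}$ (after absorbing the $\{2k\}$ from the lemma) rather than an incorrect grading shift.
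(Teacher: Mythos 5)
Your proposal is correct and is essentially the paper's own (largely implicit) argument: the Corollary is deduced at the kernel level from Lemma \ref{lem:T_k} in the form $\sT(N-k,N) \cong \sT(k,N)_L \otimes L\{2k\}$, combined with the $\C^\times$-equivariant triviality of $\det(\C^N)$ and the identification $V \cong V_i\{-1\}$ on each factor. Your shift bookkeeping, where the contributions $\{k\}$ and $\{N-k\}$ combine to $\{N\}$ and absorb the $\{2k\}$ from the Lemma to produce the stated $\{N-2k\}$, is exactly the verification the paper leaves to the reader.
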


We now consider the case of arbitrary quiver varieties. As before, we will reduce the proof to the $\sl_2$ case. Let
$$\widetilde{\theta_i} := \Delta_* \det (V_i) \in D(\tM_i(\l) \times \tM_i(\l))$$
where $\tM_i(\l) \cong T^\star \G(v_i,N_i) \times \mathrm{M}'_i(\l)$ and
$$\widehat{\theta_k} := \Delta_* \pi_i^* \det(V_k) \in D(\hM_i(\l) \times \hM_i(\l)).$$
Now $V_i$ on $\tM_i$ restricts to $\pi_i^* V_i$ on $\hM_i$ so $\widetilde{\theta_i}$ is compatible with $\widehat{\theta_i}$. Moreover, $\Delta_* \det(\C^N)$ is compatible with 
$$\Delta_* \pi_i^* (\prod_j \det(V_j) \otimes \det(W_i)) \cong \prod_j \widehat{\theta_j}$$ 
on $\hM_i(\l) \times \hM_i(\l)$ where the product on the right-hand side is the convolution product $*$ over all $j$ such that $\la \alpha_i, \alpha_j \ra = -1$. 

Finally, we saw in Corollary \ref{th:EFcomp} that each $\tsE_i^{(r)}$ is compatible with $\hsE_i^{(r)}$ and likewise each $\tsF_i^{(r)}$ compatible with $\hsF_i^{(r)}$. Subsequently, if we form the corresponding complexes we obtain kernels $\widetilde{\sT}$ and $\widehat{\sT}$ which are compatible (and likewise their left adjoints are also compatible). 

Now on the varieties $\tM_i(\l) \times \tM_i(s_i(\l))$ we have 
$$\widetilde{\sT_i}(\l) \{ - \la \l, \alpha_i \ra \} \cong \widetilde{\theta_i} * \Delta_* \det(\C^N)^\vee * \widetilde{\sT_i}(s_i(\l))_L * \widetilde{\theta_i}$$ 
as a consequence of Lemma \ref{lem:T_k} above (where $s_i(\l) = \l - \la \l, \alpha_i \ra \alpha_i$). Here we have to assume $\la \l, \alpha_i \ra \le 0$ in order for the equivariant shift to be correct. It follows that 
$$(j \times \id)_* \widehat{\sT_i}(\l) \{ - \la \l, \alpha_i \ra \} \cong (j \times \id)_* (\prod_j \widehat{\theta_j}^{-1} * \widehat{\theta_i} * \widehat{\sT_i}(s_i(\l))_L * \widehat{\theta_i})$$
where $j$ is the embedding of $\hM_i(\l)$ into $\tM_i(\l)$. Since all the kernels here are invertible, we can apply inverses to both sides and express this as 
$$(j \times \id)_* (\widehat{\sT_i}(\l)_L * \prod_j \widehat{\theta_j}^{-1} * \widehat{\theta_i} * \widehat{\sT_i}(s_i(\l))_L * \widehat{\theta_i}) \cong (j \times \id)_* \O_\Delta \{ - \la \l, \alpha_i \ra \}.$$
Then, since $\O_\Delta$ is a sheaf, applying Lemma \ref{th:rightinverse} we get 
$$\widehat{\sT_i}(\l)_L * \prod_j \widehat{\theta_j}^{-1} * \widehat{\theta_i} * \widehat{\sT_i}(s_i(\l))_L * \widehat{\theta_i} \cong \O_\Delta \{ -\la \l, \alpha_i \ra \}.$$
Since everything we did is $P_i$-equivariant, this isomorphism descends to $\M(\l) \times \M(s_i(\l))$ and gives 
$$\sT_i(\l)_L * \prod_j \theta_j^{-1} * \theta_i * \sT_i(s_i(\l))_L * \theta_i \cong \O_\Delta \{ -\la \l, \alpha_i \ra \}.$$
The relation $\sT_i(\l) \cong \prod_j \theta_j^{-1} \theta_i * (\sT_i)_L * \theta_i \{ \la \l, \alpha_i \ra \}$ now follows. 
\end{proof}
\begin{Remark}
This braid group action can be shown to agree with the affine braid group action constructed on the full flag variety by Riche in \cite{Ric}. 
\end{Remark}

\subsection{K-theory}\label{sec:affinealgebra}
The geometric categorical $ \g $-action on $ \{ \M(\l) \} $ constructed in Theorem \ref{thm:main} gives an action of $ U_q(\g) $ on $ \oplus K(\M(\l)) $.  On the other hand, Nakajima \cite{Nak00} defined an action of the quantized loop algebra $ U_q(L\g) $ on the $ G \times \C^\times $-equivariant K-theory of these same varieties.  The quantized loop algebra $ U_q(L\g) $ contains the quantum group $ U_q(\g) $ as a subalgebra.  In Proposition \ref{th:agreeNak}, we showed that these two actions of $ U_q(\g) $ coincide.

It is natural to expect that the $ U_q(L\g) $ action on the K-theory can be categorified to a categorical $ U_q(L\g) $ action on $ \{D(\M(\l)) \} $ (though the notion of categorical $ U_q(L\g) $ action has yet to be defined).  The affine braid group action constructed in the previous section should be seen as a manifestation of this expectation for the following reason: recall that by the results of \cite{ck3}, the braid group action on the categories $ D(\M(\l)) $ descend to the braid group action on K-theory, which comes from the Lusztig map $ B_\Gamma \rightarrow U_q(\g) $.  Similarly, there is a map of the affine braid group to $ U_q(L\g) $ and hence an action of the affine braid group on $ \oplus K(\M(\l)) $.  So a suitably defined categorical $ U_q(L\g) $ action on $\{D(\M(\l))\}$ should be the source of the above affine braid group action.

\subsection{On a conjecture of Braverman-Maulik-Okounkov} \label{se:BMO}
Given a resolution of a symplectic singularity $ X \rightarrow X_0 $, Braverman-Maulik-Okounkov \cite{BMO} study the quantum connection on $ H^*(X) $.  This quantum connection gives rise to a monodromy action of a group $ B $ on $ H^*(X) $.  Based on homological mirror symmetry considerations, they conjectured in \cite{BMO} that this monodromy action of $ B $ on $ H^*(X) $ can be lifted to an action of $ B $ on $D(X) $.

Assume $ \Gamma $ is a finite type Dynkin diagram and consider $X = \cup_\l \M(\l) $, a union of quiver varieties.  In not-yet-published work, Braverman-Maulik-Okounkov check that the quantum connection on $ H^*(X) $ is the trigonometric Casimir connection recently defined by Toledano-Laredo \cite{TL}.  On the other hand, Toledano-Laredo conjectures that the monodromy of the trigonometric Casimir connection coincides with the affine braid group action on $ K(X) $ coming from Nakajima's $ U_q(L\g) $ action.

Thus, in order to verify the Braverman-Maulik-Okounkov conjecture in the quiver variety setting, it suffices to verify Toledo-Laredo's conjecture and verify that our affine braid action on K-theory comes from Nakajima's $U_q(L\g)$ action.

\section{Categorification of Irreducible Representations}\label{sec:irreps}

The geometric categorical $\g$ action of Theorem \ref{thm:main} induces an action of $U_q(\g)$ on $\oplus_\l K(\M(\l))$. This representation is reducible in general, so in this section we explain how to categorify the irreducible representations as well as tensor product representations. 

Unfortunately, this construction only works in the non-equivariant setting. This means that in the rest of this section everything will be non-equivariant (in particular, we only categorify irreducible representations of $U(\g)$ and not $U_q(\g)$). This unfortunate phenomenon already appears at the level of K-theory in the work of Nakajima. 

\subsection{Dimension filtration}\label{filtration}

Suppose that $X$ is a smooth quasi-projective variety. We will denote by $K(Coh(X))$ and $K(D(X))$ the Grothendieck groups of $Coh(X)$ and $D(X)$. Both $K(Coh(X))$ and $K(D(X))$ are naturally $\Z$-modules, though we can always tensor with the complex numbers to make them into complex vector spaces. There is an isomorphism $K(Coh(X)) \xrightarrow{\sim} K(D(X))$ given by viewing a coherent sheaf as a complex lying in cohomological degree zero.

Now $K(Coh(X))$ (and hence $K(D(X))$) has a \emph{dimension filtration}
$$ 0 = \Gamma_{-1} \subset \Gamma_{0} \subset \Gamma_1 \subset \hdots \subset \Gamma_{dim(X)} = K(Coh(X)) = K(D(X))$$
where $\Gamma_k \subset K(Coh(X))$ is the submodule spanned by sheaves $\sM$ such that $\dim(\supp(\sM)) \leq k$ (see \cite{CG}, Section 5.9).  This induces a filtration of $D(X)$
$$0 = \Gamma_{-1}D(X) \subset \Gamma_0D(X) \subset \hdots \subset \Gamma_{dim(X)} D(X) = D(X)$$
by setting $\Gamma_k D(X) \subset D(X)$ to be the subcategory whose image in $K(D(X))$ lies in $\Gamma_k \subset K(D(X))$. The subcategories $\Gamma_k D(X)$ are themselves triangulated and we have 
$$K(\Gamma_k D(X)) \simeq \Gamma_k \subset K(D(X)).$$
We refer the reader to \cite{T} for more details about the relationship between triangulated subcategories of a triangulated category and subgroups of the Grothendieck group.

Let $H^{BM}_*(X,\C)$ denote the Borel-Moore homology of $X$. Then $\Gamma_i D(X)$ can also be defined as the inverse image of $\oplus_{j \le 2i} H^{BM}_j(X,\C)$ under the character map $\mbox{ch}: K(D(X)) \rightarrow H^{BM}_*(X,\C)$. See \cite{CG} section 5.9 for a more detailed discussion. 

\subsection{Categories for irreducible representations} \label{sec:irrep}

\begin{Proposition}
Both $\E_i^{(r)}$ and $\F_i^{(r)}$ restrict to functors on the triangulated category 
$$ \mathcal{V}_{\Lambda_w} = \oplus_\l \Gamma_{\frac{1}{2} \dim \M(\l)} D(\M(\l)). $$
Moreover, the induced action of $U(\g)$ on the complexified Grothendieck group $K(\mathcal{V}_{\Lambda_w})$ is isomorphic to the irreducible module with highest weight $\Lambda_w$.
\end{Proposition}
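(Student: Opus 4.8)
The plan is to prove the two assertions in turn: the restriction of the functors by a dimension count, and the identification of the module by exhibiting a highest weight vector and matching weight-space dimensions with $L(\Lambda_w)$.

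For the restriction, I would argue through the character map $\mathrm{ch}: K(D(\M(\l)))_\C \to \oplus_j H^{BM}_j(\M(\l),\C)$, using the description recalled in \ref{filtration} of $\Gamma_k D(\M(\l))$ as the preimage of $\oplus_{j\le 2k}H^{BM}_j$. By Grothendieck--Riemann--Roch for Fourier--Mukai transforms (see \cite{CG}), $\mathrm{ch}$ intertwines $\E_i^{(r)}$ with convolution by $\mathrm{ch}(\sE_i^{(r)})$, whose top-degree part is the fundamental class $[\B_i^{(r)}]$, living in real degree $\dim\M(\l)+\dim\M(\l+r\alpha_i)$ because $\B_i^{(r)}$ is Lagrangian. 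The convolution $H^{BM}_j(\M(\l))\otimes H^{BM}_k(\M(\l)\times\M(\l+r\alpha_i)) \to H^{BM}_{j+k-2\dim\M(\l)}(\M(\l+r\alpha_i))$ then carries the middle range $j\le \dim\M(\l)$ into degrees $\le \dim\M(\l+r\alpha_i)$, while the lower-order Chern and Todd corrections only decrease the output degree. Hence $\E_i^{(r)}$, and symmetrically $\F_i^{(r)}$, preserves the middle filtration and restricts to $\mathcal{V}_{\Lambda_w}$.

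For the module structure, the first step is to identify each weight space. Writing $\mathfrak{L}(\l)=\pi^{-1}(0)$ for the (compact) central fibre of $\pi:\M(\l)\to\M_0(\l)$, the contracting $\C^\times$-action retracts $\M(\l)$ onto $\mathfrak{L}(\l)$, so $H^k(\M(\l))=0$ for $k>\dim\M(\l)$, and by Poincar\'e duality $H^{BM}_j(\M(\l))=0$ for $j<\dim\M(\l)$. Therefore $\Gamma_{\frac12\dim\M(\l)}K(\M(\l))_\C \cong H^{BM}_{\dim\M(\l)}(\M(\l))$, whose dimension is the number of top-dimensional irreducible components of $\mathfrak{L}(\l)$; by Nakajima \cite{Nak98} this equals $\dim L(\Lambda_w)_\l$. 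The highest weight line is the class $[\O_{pt}]$ on $\M(\Lambda_w)$ (a point): since $\M(\Lambda_w+\alpha_i)=\emptyset$ each $\E_i$ annihilates it, and every occurring weight is $\le\Lambda_w$, so it is a highest weight vector of weight $\Lambda_w$.

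Finally, $\mathcal{V}_{\Lambda_w}$ is integrable by Theorem \ref{th:geomtonaive}, so the $\g$-submodule generated by $[\O_{pt}]$ is an integrable highest weight module with dominant integral highest weight $\Lambda_w$, hence isomorphic to $L(\Lambda_w)$; since its weight-space dimensions already match those of $\mathcal{V}_{\Lambda_w}$ by the previous step, this submodule is everything and $K(\mathcal{V}_{\Lambda_w})\cong L(\Lambda_w)$. I expect the main obstacle to be the geometric input of Nakajima rather than the representation-theoretic packaging, which is formal: one needs the vanishing of $H^{BM}_j(\M(\l))$ below the middle degree (via the retraction onto $\mathfrak{L}(\l)$) together with his count of the irreducible components of the central fibre as $\dim L(\Lambda_w)_\l$. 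The only genuinely new verification is the degree bookkeeping of the first paragraph, ensuring that $\E_i^{(r)}$ and $\F_i^{(r)}$ preserve the middle filtration.
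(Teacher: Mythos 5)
Your proposal is correct and takes essentially the same route as the paper: your first paragraph is precisely the degree bookkeeping behind \cite{CG} Proposition 5.11.12, which the paper simply cites (with $M_1$ a point and the half-dimensionality of $\B_i^{(r)}$ as the key input), and your second part uses the same retraction-plus-duality vanishing of $H^{BM}_j(\M(\l))$ below the middle degree together with Nakajima's identification of the middle-dimensional class count with $\dim L(\Lambda_w)_\l$. The only difference is the finish: the paper concludes by noting that integrable representations are determined by their characters, while you exhibit the highest weight vector $[\O_{pt}]$ on $\M(\Lambda_w)$ and invoke irreducibility of integrable highest weight modules --- an equivalent argument.
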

\begin{proof}
The fact that $\E_i^{(r)}$ and $\F_i^{(r)}$ preserves $\Gamma_{\frac{1}{2} \dim \M(\l)} D(\M(\l))$ follows from Proposition \cite{CG} 5.11.12. More precisely, in the notation of \cite{CG},  we take $M_1$ to be a point, $M_2 = \M(\l)$ and $M_3 = \M(\l+r\alpha_i)$ and use that $\E_i^{(r)}$ and $\F_i^{(r)}$ are induced by sheaves inside $M_2 \times M_3$ whose support is half dimensional. Note that this would be false if we worked equivariantly. 

Now $H^{BM}_i(\M(\l),\C) = 0$ if $i < \dim \M(\l)$. To see this we use that $\M(\l)$ retracts, using our $\C^\times$ action, to the core of $\M(\l)$ which is half dimensional (i.e. of real dimension $\dim\big( \M(\l)\big)$). This means that $H_i(\M(\l),\C) = 0$ if $i > \dim\big( \M(\l)\big)$. Since there is a non-degenerate pairing $H^{BM}_i(X,\C) \times H_{2 \dim(X) - i}(X,\C) \rightarrow \C$, this then implies that $H^{BM}_i(\M(\l),\C) = 0$ when $i < \dim \M(\l)$. 

Hence $\Gamma_{\frac{1}{2} \dim \M(\l) -1} D(\M(\l)) = 0$ and the map 
$$\mathrm{ch}: K(\Gamma_{\frac{1}{2} \dim \M(\l)} D(\M(\l))) \rightarrow H^{BM}_{\dim \M(\l)}(\M(\l),\C)$$ 
is an isomorphism. 

Now, Nakajima \cite{Nak98} shows that the dimension of  $ H^{BM}_{\dim \M(\l)}(\M(\l),\C)$ is the dimension of the $\l $ weight space of the irreducible $U(\g)$ module of highest weight $\Lambda_w$.  Hence by the above isomorphism, we see that $K(\Gamma_{\frac{1}{2} \dim \M(\l)} D(\M(\l)))$ is also the dimension of this weight space.  Since integrable representations are determined by their characters, the result follows.
\end{proof}

\subsection{Categories for tensor product representations} 

Denote by $L(\Lambda_w)$ the irreducible $U(\g)$ module with highest weight $\Lambda_w$. One would also like to categorify tensor products such as $L(\Lambda_{w^1}) \otimes L(\Lambda_{w^2})$ as follows. 

Let $w^1, w^2$ be dimension vectors with $w^1+w^2 = w$, and fix a direct sum decomposition $W = W^1 \oplus W^2$ with $\dim(W^i) = w^i$.  Define a one parameter subgroup $\l: \C^* \rightarrow GL(W)$ by$$\l(t) = \id_{W^1} \oplus t \id_{W^2} \in GL(W^1)\times GL(W^2) \subset GL(W).$$
Define the tensor product variety
$$\M(v,w^1,w^2) = \{y \in \M(v,w) \mid \lim_{t \to 0} \lambda(t) \pi_v(y) = 0 \in \M_0(w)\}. $$
This variety was defined by Nakajima in \cite{Nak01} and also by Malkin in \cite{Mal}. We will use 
$$\oplus_v D(\M(v,w); \M(v,w^1,w^2))$$ 
to categorify $L(\Lambda_{w^1}) \otimes L(\Lambda_{w^2})$ where $D(X;Y)$ denote the subcategory of $D(X)$ consisting of complexes which are exact over the complement of $Y \subset X$. 

Denote by $D^<(\M(v,w);\M(v,w^1,w^2))$ the subcategory in $D(\M(v,w);\M(v,w^1,w^2))$ whose support has dimension strictly smaller than $\dim \M(v,w^1,w^2)$ (this is the second last term in the dimension filtration of $D(\M(v,w);\M(v,w^1,w^2))$. Denote by 
$$D_{quot}(\M(v,w);\M(v,w^1,w^2)) := D(\M(v,w);\M(v,w^1,w^2)) / D^<(\M(v,w);\M(v,w^1,w^2))$$ 
the quotient category and set
$$D_{quot}(w^1,w^2) := \oplus_v D_{quot}(\M(v,w);\M(v,w^1,w^2)).$$

One can show that $D_{quot}(w^1,w^2)$ categorifies $L(\Lambda_{w^1}) \otimes L(\Lambda_{w^2})$. Once again, this only holds if $q=1$ since the subcategory $D^<(\M(v,w);\M(v,w^1,w^2))$ is preserved by the functors $\E_i^{(r)}$ and $\F_i^{(r)}$ only if $q=1$. 

\begin{Remark} The above construction of tensor product representations makes sense when $w^2=0$, in which case the tensor product representation of $U(\g)$ is irreducible. 
However, this categorification of irreducible representations does not use the same categories as the categorification from section \ref{sec:irrep}.  For example, in the second construction, every object is supported on the compact core of the quiver variety. On the other hand, the first construction includes objects like the structure sheaf $\O_p$ for any point $p$. These two constructions end up categorifying the same representation in part because objects like $\O_p$ are trivial in (non-equivariant) K-theory.
\end{Remark} 

\section{Examples}\label{sec:examples}

We conclude by singling out a few examples of special quiver varieties, the geometric categorical actions on them, and the accompanying braid group actions. 

\begin{Example} (Quiver varieties of type $A$.) 
Let $\Gamma$ be of type $A_n$, and let $w = (N,0,0,\hdots,0)$. For $v = (v_1,v_2,\hdots,v_n)$, the quiver variety $\M(\l)$ is empty unless $N \geq v_1 \geq v_2 \hdots \geq v_n \geq 0$, in which case $\M(\l)$ is isomorphic to the cotangent bundle of a partial flag variety:
$$\M(\l) \cong \{(X,V_1,\hdots,V_n) \mid 0 \subset V_n \subset \dots \subset V_1 \subset \C^N, X(V_j) \subset V_{j+1}\}$$
(see \cite{Nak94}). This example of cotangent bundles of partial flag varieties was discussed in \cite{ck3}, section 3.  More generally, other type A quiver varieties are isomorphic to resolved type A Slodowy slices by a theorem of Maffei \cite{Maf}. So from section \ref{sec:affinebraid} we obtain new braid group actions on derived categories of coherent sheaves on resolved Slodowy slices.
\end{Example}

\begin{Example} (Adjoint representation of $\g$ when $\g$ is of finite type.)\label{ex:adjoint}
When the Kac-Moody Lie algebra $ \g $ is finite-dimensional the adjoint representation of $ \g $ is an integrable highest weight representation.  The highest weight of the adjoint representation is called the longest root.  Let $ w $ be such that $ \Lambda_w $ is the longest root. It is well-known that for this $ w $, $\M(0) $ is the resolution of the Kleinian singularity corresponding to $ \Gamma $ under the McKay correspondence, while all other $\M(\lambda) $ are either empty or a point. The functor $\E_i: D(\mbox{pt}) \rightarrow D(\M(0))$ is induced by the structure sheaf (tensored with a line bundle) of the $\p^1 \subset \M(0)$ indexed by $i$. Meanwhile, the functors $\E_{ij}: D(\mathrm{pt}) \rightarrow D(\M(0))$ are induced by the structure sheaf of the union the the two $\p^1$'s indexed by $i$ and $j$. 

The induced affine braid group action preserves the $0$ weight space and thus gives an affine braid group action on the derived category of the resolution of the Kleinian singularity. This example is well known in literature (see for instance \cite{KS} or \cite{ck3} section 2.4). 
\end{Example}

\begin{Example} (The basic representation of $\widehat{\g}$.)\label{ex:basic}
The adjoint representation of example \ref{ex:adjoint} is closely related to the basic representation of the corresponding affine Kac-Moody algebra $U_q(\widehat{\g})$. The Dynkin diagram of an affine Kac-Moody Lie algebra $\widehat{\g}$ is obtained from that of the finite dimensional Lie algebra $\g$ by adding a single new node (the affine node) and connecting it with a single edge to each node in the support of the vector $w$ from example \ref{ex:adjoint}.  

Let $\widehat{w} = (1,0,\hdots,0)$ be the dimension vector of a one-dimensional vector space supported on the affine node. The weight $\Lambda_{\widehat{w}}=\Lambda_0$ is a fundamental weight, and the corresponding irreducible representation $V_{\Lambda_0}$ of highest weight $\Lambda_0$ is known as the \emph{basic representation} of $U_q(\widehat{g})$.  

The finite dimensional Lie algebra $\g$ sits naturally as a subspace of $V_{\Lambda_0}$:
$$\g \cong \bigoplus_{\l= \Lambda_0-\alpha_v : \langle \Lambda_0,\alpha_v \rangle = 1} V_{\Lambda_0}(\l).$$
When $V_{\Lambda_0}$ is restricted to the subalgebra $U_q(\g)\subset U_q(\widehat{\g})$, the above copy of $\g$ is a copy of the adjoint representation.  Thus the adjoint representation of $\g$ is categorified by 
$$ \bigoplus_{\l= \Lambda_0-\alpha_v : \langle \Lambda_0,\alpha_v \rangle = 1}  D(\M(\l)), $$ 
where $\M(\l)$ is a quiver variety of affine type (the quiver variety $\M(\l)$ which occur in the above summation are those with $\dim(W)=(1,0,\hdots,0)$ and $\dim(V_0)=1$).

The two categorifications of the adjoint representation (one using finite type quiver varieties and one using affine type quiver varieties) are actually equivalent. Indeed, each of the affine type quiver varieties above is isomorphic to the corresponding finite type quiver variety from example \ref{ex:adjoint}. In particular, the resolution of the Klenian singularity occurs as both the 0 weight space variety of example \ref{ex:adjoint} and as the $\Lambda_0-\delta$ weight space variety of the basic representation (here $\delta$ is the imaginary root, i.e. the positive generator of the kernel of the affine Cartan matrix).

The quiver varieties $\M(\l)$ for the basic representation are also of independent geometric interest because of their relation to Hilbert schemes. 
Let $\mbox{Hilb}^k(\C^2)$ denote the Hilbert scheme of $k$ points on $\C^2$.  The finite subgroup
$\Gamma\subset SL_2(\C)$ acts naturally on $\C^2$ and hence on each of the Hilbert schemes $\mbox{Hilb}^k(\C^2)$.   The connected components of
$\big(\mbox{Hilb}^k(\C^2)\big)^\Gamma$ are parametrized by certain representations of the finite group $\Gamma$: a point $\big(\mbox{Hilb}^k(\C^2)\big)^\Gamma$ is by definition a $\Gamma$-invariant ideal $I \subset \C[x,y]$ codimension $k$, and thus the quotient $\C[x,y]/I$ is a $k$-dimensional representation of $\Gamma$.
The connected components of $\big(\mbox{Hilb}^k(\C^2)\big)^\Gamma$ are then parametrized by the isomorphism classes of $\Gamma$ representations that occur in this way.  Moreover, these connected components are isomorphic to the quiver varieties $\M(\l)$ which occur in the basic representation \cite{Nak99},
\begin{equation}\label{eq:iso}
    \coprod_k \big(\mbox{Hilb}^k(\C^2)\big)^\Gamma \cong \coprod_\l \M(\l).
\end{equation}
It follows that the construction of section \ref{sec:affinebraid} gives an action of the double affine braid group on $\oplus_k D\big(\mbox{Hilb}^k(\C^2)\big)^\Gamma$.  This double affine braid group action does not preserve any of the individual connected components of $\big(\mbox{Hilb}^k(\C^2)\big)^\Gamma$, but some of the components are preserved by natural subalgebras of the double affine braid group.  For example, if $R$ is the regular representation of $\Gamma$, the derived category of the component 
$$ \{I\subset \C[x,y] : \C[x,y]/I \cong R^{\oplus n}\} \subset \mbox{Hilb}^{n|\Gamma|}(\C^2) $$
(which is known as the $\Gamma$-equivariant Hilbert scheme) is preserved by all of the generators of the double affine braid group except the generator $T_0$.  (This component is isomorphic to the quiver variety $\M(n\delta)$ in equation (\ref{eq:iso}).) Since the double affine braid group generators without $T_0$ generate a copy of the affine braid group, the construction of section \ref{sec:affinebraid} gives an action of the affine braid group on the derived category of the $\Gamma$-equivariant Hilbert scheme. 
\end{Example}

\begin{Example}(Doubly extended hyperbolic Kac-Moody algebras)\label{ex:double}
Outside of finite and affine type another class of Kac-Moody algebras to attract independent consideration is the class of \emph{doubly extended} hyperbolic Kac-Moody algebras.  The Dynkin diagram of such a doubly extended algebra is obtained from an affine Dynkin diagram by adding a single new node and connecting it to the affine vertex with a single edge.  The Weyl groups (and perhaps the braid groups) of these algebras are interesting because of their relation to modular forms.  For example, the double extension of $\sl_2$ is a hyperbolic Kac-Moody algebra whose Weyl group isomorphic to $PGL_2(\Z)$ (see \cite{FF}), while the Weyl group of the double extension of $E_8$ (this double extension is also known as $E_{10}$) admits a construction as a matrix algebra over the octonionic integers (see \cite{FKN}).  The construction of section \ref{sec:affinebraid} provides categorical actions of the braid groups of these modular Weyl groups.

\end{Example}


\begin{thebibliography}{E-G-S}

\bibitem[BMO]{BMO}  
A. Braverman, D. Maulik and A. Okounkov,
Quantum cohomology of the Springer resolution, \textsf{arXiv:1001.0056.}

\bibitem[C]{C} 
S. Cautis, Equivalences and stratified flops, \textsf{arXiv:0909.0817}.

\bibitem[CK1]{ck1} 
S. Cautis and J. Kamnitzer,
Khovanov homology via derived categories of coherent sheaves I, $\sl_2$ case,
\emph{Duke Math. J.} \textbf{142} (2008), no.3, 511--588. \textsf{math.AG/0701194}.

\bibitem[CK3]{ck3}  
S. Cautis and J. Kamnitzer,
Braid groups and geometric categorical Lie algebra actions; \textsf{arXiv:1001.0619}.

\bibitem[CK4]{ck4} 
S. Cautis and J. Kamnitzer, Hodge modules and categorical $\sl_2$ actions (in preparation). 

\bibitem[CKL1]{ckl1} 
S. Cautis, J. Kamnitzer and A. Licata, 
Categorical geometric skew Howe duality; \emph{Inventiones Math.} \textbf{180} (2010), no. 1, 111--159; \textsf{math.AG/0902.1795}.

\bibitem[CKL2]{ckl2} 
S. Cautis, J. Kamnitzer and A. Licata, 
Coherent sheaves and categorical $\sl_2$ actions, \emph{Duke math. J.} \textbf{154} (2010), no. 1, 135--179; \textsf{math.AG/0902.1796}.

\bibitem[CKL3]{ckl3} 
S. Cautis, J. Kamnitzer and A. Licata, 
Derived equivalences for cotangent bundles of Grassmannians via categorical $\sl_2$ actions, \textsf{math.AG/0902.1797}.

\bibitem[CG]{CG}
N. Chriss and V. Ginzburg, \textit{Representation theory and complex geometry}, Birk\"auser, 1997.

\bibitem[CR]{CR}
J. Chuang and R. Rouquier, Derived equivalences for symmetric groups and $\sl_2$-categorification, \textit{Annals of Mathematics}, \textbf{167} (2008), 245-298. \textsf{math.RT/0407205}.

\bibitem[FF]{FF} 
A.J. Feingold and I.B. Frenkel, A hyperbolic Kac-Moody algebra and the theory of Siegel modular forms of genus 2; \emph{Math. Ann.} \textbf{263} (1983) 87--144.

\bibitem[FKN]{FKN} 
A.J. Feingold, A. Kleinschmidt and H. Nicolai, Hyperbolic Weyl groups and the four normed division algebras; \emph{J. of Algebra} \textbf{322} (2009), no. 4, 1295--1339.

\bibitem[H]{H}
D. Huybrechts, \textit{Fourier-Mukai Transforms in Algebraic Geometry}, Oxford University Press, 2006.

\bibitem[KL]{kl}
M. Khovanov and A. Lauda, A diagrammatic approach to categorification of quantum groups I, II, and III; \textsf{math.QA/0803.4121}, \textsf{math.QA/0804.2080}, and \textsf{math.QA/0807.3250}.

\bibitem[KS]{KS}
M. Khovanov and P. Seidel, Quivers, Floer cohomology, and braid group actions, \textit{J. Amer. Math. Soc.} \textbf{15} (2002), 203--271. 

\bibitem[KT]{KT}
M. Khovanov and R. Thomas, Braid cobordisms, triangulated categories, and flag varieties, \textit{HHA} \textbf{9} (2007), 19--94; \textsf{math.QA/0609335}.

\bibitem[Maf]{Maf} A. Maffei, Quiver varieties of type A. \textit{Comment. Math. Helv.} 80 (2005), no. 1, 1--27.

\bibitem[Mal]{Mal} A. Malkin, Tensor product varieties and crystals: the $ADE$ case. \textit{Duke Math. J.} 116 (2003), no. 3, 477--524.

\bibitem[Nak94]{Nak94}
H. Nakajima, Instantons on ALE spaces, quiver varieties, and Kac-Moody algebras,
\textit{Duke Math. J.}, \textbf{76} (1994), 356--416.

\bibitem[Nak98]{Nak98} H. Nakajima, Quiver varieties and Kac-Moody algebras. \textit{Duke Math. J.} 91 (1998), no. 3, 515--560.

\bibitem[Nak99]{Nak99} H. Nakajima, Lectures on Hilbert schemes of points on surfaces. \textit{University Lecture Series}, 18. American Mathematical Society, Providence, RI, 1999. xii+132 pp. ISBN: 0-8218-1956-9.

\bibitem[Nak00]{Nak00} 
H. Nakajima, Quiver varieties and finite-dimensional representations of quantum affine algebras. \textit{J. Amer. Math. Soc.} \textbf{14} (2001), no.1, 145--238.

\bibitem[Nak01]{Nak01} 
H. Nakajima, Quiver varieties and tensor products. \textit{Invent. Math.} \textbf{146} (2001), no.2, 399--449.

\bibitem[Ric]{Ric}
S. Riche, Geometric braid group action on derived category of coherent sheaves,  \textit{Represent. Theory} \textbf{12} (2008), 131--169.

\bibitem[R]{R}
R. Rouquier, 2-Kac-Moody algebras; \textsf{math.RT/0812.5023}.

\bibitem[T]{T}
R.W. Thomason, The classification of triangulated subcategories, \textit{Compositio Mathematica} \textbf{105}: 1997, 1--27.

\bibitem[TL]{TL}
V. Toledano-Laredo, The trigonometric Casimir connection of a simple Lie algebra; arXiv:1003.2017.

\bibitem[W]{W}
B. Webster, Knot invariants and higher representation theory I; arXiv:1001.2020.

\bibitem[Z]{Z}
H. Zheng, Categorification of integrable representations of quantum groups, arXiv:0803.3668.

\end{thebibliography}
\end{document}